\newcommand{\B}{\mathbf}
\newcommand{\bx}{\mathbf{x}}
\newcommand{\diver}{\mbox{div}}
\newcommand{\bn}{\mathbf{n}}
\newcommand{\cM}{\mathcal{M}}
\newcommand{\by}{\mathbf{y}}
\newcommand{\p}{\partial}
\newcommand{\al}{\alpha}
\newcommand{\hk}{R_t(\bx, \by)}
\newcommand{\rhk}{\bar{R}_t(\bx, \by)}
\newcommand{\rrhk}{\bar{\bar{R}}_t(\bx, \by)}
\newcommand{\rhkpipj}{\bar{R}_t({\bf p}_i, {\bf p}_j)}
\newcommand{\hkxpj}{R_t(\bx, {\bf p}_j)}
\newcommand{\rhkxpj}{\bar{R}_t(\bx, {\bf p}_j)}
\newcommand{\bff}{{\bf f}}
\newcommand{\bfu}{{\bf u}}
\newcommand{\bfp}{{\bf p}}
\newcommand{\bfs}{{\bf s}}
\newcommand{\bfn}{{\bf n}}
\newcommand{\bz}{\mathbf{z}}
\newcommand{\bq}{\mathbf{q}}
\newcommand{\M}{{\mathcal M}}
\newcommand{\bV}{\mathbf{V}}
\newcommand{\bA}{\mathbf{A}}
\newcommand{\invt}{\frac{1}{t}}
\newcommand{\mathd}{\mathrm{d}}
\newtheorem{theorem}{\textbf{Theorem}}[section]
\newtheorem{lemma}{\textbf{Lemma}}[section]
\newtheorem{remark}{\textbf{Remark}}[section]
\newtheorem{proposition}{\textbf{Proposition}}[section]
\newtheorem{assumption}{\textbf{Assumption}}[section]
\newtheorem{definition}{\textbf{Definition}}[section]
\newcommand{\R}{\mathbb{R}}
\numberwithin{equation}{section}
\begin{document}

\title{Convergence of the Point Integral method for Poisson equation on point cloud}

\author{
Zuoqiang Shi%
\thanks{Yau Mathematical Sciences Center, Tsinghua University, Beijing, China,
100084. \textit{Email: zqshi@math.tsinghua.edu.cn.}%
}
\and Jian Sun %
\thanks{Yau Mathematical Sciences Center, Tsinghua University, Beijing, China,
100084. \textit{Email: jsun@math.tsinghua.edu.cn.}%
}
}

\date{}
 \maketitle

\begin{abstract}
The Laplace-Beltrami operator (LBO) is a fundamental object associated to Riemannian manifolds, 
which encodes all intrinsic geometry of the manifolds and has many desirable properties.
Recently, we proposed a novel numerical method, Point Integral method (PIM), to discretize the Laplace-Beltrami operator on point clouds \cite{LSS}.
In this paper, we analyze the convergence of Point Integral method (PIM) for Poisson equation with Neumann boundary condition
on submanifolds isometrically embedded in Euclidean spaces. 
\end{abstract}

\section{Introduction}

The partial differential equations on manifolds arise in a wide variety of applications. In many problems, 
including material science \cite{CFP97,EE08}, fluid flow \cite{GT09,JL04},
biology and biophysics \cite{BEM11,ES10,NMWI11,WD08}, people need to study the physical process, for instance diffusion and 
convection, in curved surfaces which introduce different kinds of PDEs in surfaces. 
It has been several decades to develop numerical methods for solving PDEs in surfaces. Many methods have been developed, 
such as 
surface finite element method \cite{DE-Acta}, level set method \cite{Bertalmio,XZ03}, grid based particle method \cite{LZ09,LZ11} 
and closest point method \cite{RM08,MR09}. 

Recently, manifold model attracts more and more attentions in data analysis and image processing \cite{Peyre09,LDMM,belkin2003led,Coifman05geometricdiffusions}. 
 It is well known that 
 PDEs on the manifold, especially the Laplace equation, encode lots of intrinsic information of the
manifold which is very helpful to reveal the underlying structure hidden in the data. 
In the data analysis problems, data is usually represented as a collection of
points embedding in a high dimensional Euclidean space, which is refereed as point cloud. 
The point cloud gives a sample of the manifold and we need to solve PDEs on the unstructured point cloud. 
Usually, the point cloud is embedded in a high dimensional space, the traditional methods for PDEs on 2D surfaces do not work in this case.



In past few years, many efforts were devoted to develop alternative numerical methods to 
discretize the differential operators on point cloud.
Liang et al. proposed to discretize the differential operators on point cloud by local least square approximations of the manifold \cite{Liang13}.
Their method can achieve high
order accuracy and enjoy more flexibility since no mesh is needed. In principle, it can be applied to
manifolds with arbitrary dimensions and co-dimensions with or without boundary. However, if the dimension of the manifold is high, 
this method may not be stable since high order polynomial is used to fit the data. Later, Lai et al. proposed local mesh method to approximate the 
differential operators on point cloud \cite{Lai13}. The main idea is to construct mesh locally around each point by using K nearest neighbors. The local mesh is easier 
to construct than global mesh. Based on the local mesh, it is easy to discretize differential operators and compute integrals. However, when the dimension of the 
manifold is high, even local mesh is not easy to construct.

In~\cite{LSS}, we have proposed a novel numerical method, point integral method (PIM), 
to solve the Poisson equation on point cloud.
The main idea of the point integral method is to
approximate the Poisson equation by the following integral equation:
\begin{equation}
\label{eq:integral-intro}
  -\int_\M  \Delta_\M u(\by)\rhk d\mu_\by\approx \invt\int_{\M} \hk(u(\bx) - u(\by))\mathd\mu_\by-2\int_{\p\M}\rhk \frac{\p u}{\p\bn}(\by)\mathd \tau_\by,
\end{equation}
where $\bn$
is the out normal of $\M$, $\M$ is a smooth $k$-dimensional manifold embedded in $\mathbb{R}^d$ and $\p\M$ is the boundary of $\M$. 
$R_t(\bx,\by)$ and $\bar{R}_t(\bx,\by)$ are kernel functions given as follows
\begin{equation}
\label{eq:kernel}
R_t(\bx, \by) = C_tR\left(\frac{|\bx -\by|^2}{4t}\right),\quad
\bar{R}_t(\bx, \by) = C_t\bar{R}\left(\frac{|\bx -\by|^2}{4t}\right)
\end{equation}
where $C_t = \frac{1}{(4\pi t)^{k/2}}$ is the normalizing factor.
 $R\in C^2(\mathbb{R}^+) $ be a positive function which is integrable over $[0,+\infty)$,
\begin{equation*}
  \bar{R}(r)=\int_r^{+\infty}R(s)\mathd s.
\end{equation*}
$\Delta_\mathcal{M}=\diver(\nabla)$ is the Laplace-Beltrami operator on $\mathcal{M}$.  
Let $\Phi: \Omega\subset \mathbb{R}^k\rightarrow \cM\subset\mathbb{R}^d$ be a local parametrization of $\cM$ and $\theta\in \Omega$.
For any differentiable function $f:\cM\rightarrow \mathbb{R}$,
 define the gradient on the manifold
\begin{align}
  \label{eq:diff-M}
  \nabla f(\Phi(\theta))&=\sum_{i,j=1}^m g^{ij}(\theta)\frac{\p \Phi}{\p\theta_i}(\theta)\frac{\p f(\Phi(\theta))}{\p\theta_j}(\theta),
\end{align}
and for vector field $F:\M\rightarrow T_\bx\M$ on $\M$, where $T_\bx\M$ is the tangent space of $\M$ at $\bx\in \M$, the divergence is defined as
\begin{align}
\label{eq:diver}
\diver (F)&= \frac{1}{\sqrt{\det G}}\sum_{k=1}^d\sum_{i,j=1}^m\frac{\p}{\p \theta_i}\left(\sqrt{\det G}g^{ij}F^k(\Phi(\theta))\frac{\p \Phi^k}{\p\theta_j}\right)
\end{align}
where $(g^{ij})_{i,j=1,\cdots,k}=G^{-1}$, $\det G$ is the determinant of matrix $G$ and $G(\theta)=(g_{ij})_{i,j=1,\cdots,k}$ is the first fundamental form which is defined by
\begin{eqnarray}
  \label{eq:remainn}
  g_{ij}(\theta)=\sum_{k=1}^d\frac{\p \Phi_k}{\p\theta_i}(\theta)\frac{\p \Phi_k}{\p\theta_j}(\theta),\quad i,j=1,\cdots,m.
\end{eqnarray}
and $(F^1(\bx),\cdots,F^d(\bx))^t$ is the representation of $F$ in the embedding coordinates.

Using the integral approximation, we transfer the Laplace-Beltrami operator to an integral operator. The integral operator is easy to
be discretized on point clouds using some quadrature rule, since there is not any differential operators inside.
This is the essential ingredient in the point integral method.
Similar integral approximation is also used in nonlocal diffusion and peridynamic model \cite{Du-SIAM,book-nonlocal,DGLZ13,DJTZ13,ZD10}.

The point integral method is also related with the graph Laplacian. Graph Laplacian is a discrete object associated to a graph, which reveals many 
properties of graphs~\cite{Chung}. 
It is observed in~\cite{BelkinN05, Lafon04diffusion, Hein:2005:GMW, Singer06} that
the graph Laplacian with the Gaussian weights well approximates the LBO when the 
vertices of the graph are assumed to sample the underlying manifold.
When there is no boundary, Belkin and Niyogi~\cite{CLEM_08} showed the spectra of the graph Laplacian with 
Gaussian weights converges to that of $\Delta_\M$. The main issue that remains is how to deal with the boundary. 
In fact, near the boundary, it was observed~\cite{Lafon04diffusion, BelkinQWZ12} that the graph Laplacian is 
dominated by the first order derivative and thus fails to be true Laplacian.
Recently, Singer and Wu~\cite{Singer13} showed the spectral convergence of the graph Laplacian in 
the presence of the Neumann boundary. In both~\cite{CLEM_08} and~\cite{Singer13}, the convergence analysis is 
based on the connection between the graph Laplacian and the heat operator, and thus the Gaussian weights are essential. 

The main contribution of this paper is that, for Poisson equation with Neumann boundary condition,  we prove that the numerical solution computed 
by the PIM converges to the exact solution in $H^1$ norm as the density of the sample points tends to infinity. 
Unlike the methods used in graph Laplacian, we do not relate the integral operator to heat kernel. Instead, we use the strategy which is standard in 
numerical analysis to prove the convergence. 

It is well known in the numerical analysis that the convergence is the summation of consistency and stability. 
We prove that the coercivity of the original Laplace-Beltrami operator is preserved in the point integral method. This 
implies the stability of the point integral method. Together with the estimate of the truncation error, we get the convergence of the point integral method.

The remaining of this paper is organized as following. In Section 2, 
we describe the point integral method for Poisson equation with Neumann boundary condition. 
The convergence result is stated in Section 3. The structure of the proof is shown in Section 4. 
The main body of the proof is in Section 5, Section 6 and Section 7. 
Finally, conclusions and discussion on the future work are given in Section 8.

\section{Point Integral Method}
\label{sec:methods}

In this paper, we consider Poisson equation on a smooth, compact $k$-dimensional submanifold $\mathcal{M}$ 
in $\R^d,\; d\ge k$ with the Neumann boundary 
\begin{equation} 
\left\{\begin{array}{rl}
      \Delta_\mathcal{M} u(\bx)=f(\bx),&\bx\in \mathcal{M} \\
      \frac{\p u}{\p \bn}(\bx)=b(\bx),& \bx\in \p \mathcal{M}
\end{array}
\right.  
\label{eqn:neumann} 
\end{equation}

The manifold $\M$ is sampled with a set of sample points $P$ and a subset $S\subset P$ sampling the boundary of $\M$.
List the points in $P$ respectively $S$ in a fixed
order $P=(\bfp_1, \cdots, \bfp_n)$ where $\bfp_i \in \R^d, 1\leq i\leq n$, respectively $S=(\bfs_1, \cdots, \bfs_m)\subset P$.

In addition, assume we are given
two vectors $\bV = (V_1, \cdots, V_n)^t$ where $V_i$ is an volume weight of $\bfp_i$ in $\M$, and
$\bA= (A_1, \cdots, A_m)^t$ where $A_i$ is an area weight of $\bfs_i$ in $\p \M$, so that for any $f\in C^1(\M)$ and $g\in C^1(\M)$, 
$$\sum_{i=1}^n f(\bfp_i) V_i\approx\int_\M f(\bx) d\mu_\bx,\quad   \sum_{i=1}^m g(\bfs_i) A_i\approx \int_{\p \M} g(\bx) d\tau_\bx .$$ 
 Here $d\mu_\bx$ and $d\tau_\bx$ are the volume form
of $\M$ and $\p \M$, respectively.

Using the integral approximation \eqref{eq:integral-intro}, the Poisson equation is approximated by an integral equation,
\begin{equation}
  \label{eq:integral}
  \invt\int_{\M} \hk(u(\bx) - u(\by))\mathd\mu_\by-2\int_{\p\M}\rhk b(\by)\mathd \tau_\by=\int_\M  f(\by)\rhk d\mu_\by
\end{equation}
In the integral equation, there is not any differential operators. It is easy to discretize on the point cloud with the weight vectors, $\bV$ and $\bA$,
\begin{equation}
  \label{eq:dis}
  \sum_{\bfp_j\in P}R_t(\bfp_i,\bfp_j)(u_i-u_j)V_j-2\sum_{\bfs_j\in S}\bar{R}_t(\bfp_i,\bfs_j)b(\bfs_j)A_j=\sum_{\bfp_j\in P}\bar{R}_t(\bfp_i,\bfp_j)f(\bfp_j)V_j
\end{equation}
The solution ${\bf u} = (u_1, \cdots, u_n)^t$ to above linear system
gives an approximation of the solution to the problem~\eqref{eqn:neumann}. 

\begin{remark} In the point integral method, we need the volume weight $\bf{V}$ and area weight $\bf{A}$. We remark that 
it is much easier to obtain the volume weight $\bf{V}$ than to generate a consistent mesh for $\M$ with good shaped elements. 
If $\bV$ and $\bA$ are not given, they can be estimated as follows.
\begin{itemize}
\item If a mesh with the vertices $P$ approximating $\mathcal{M}$ is given, both weight vectors $\bV$ and $\bA$ can be easily
estimated from the given mesh by summing up the volume of the simplices incident to the vertices. 
Note that there is no requirement on 
the shape of the elements in the mesh. 
\item If the points in $P$ and $S$ are independent samples from some distribution on $\M$ and $\p\M$ respectively,
then $\bV$ and $\bA$ can be obtained from the distribution.
\item Finally, following~\cite{LuoSW09}, one can estimate the vectors $\bV$ and  $\bA$ by locally
approximating tangent spaces of $\M$ and $\p \M$, respectively. Specifically, for a point $\bfp\in P$,
project the samples near to $\bfp$ in $P$ onto the approximated tangent space at $\bfp$ and take the volume
of the Voronoi cell of $\bfp$ as its weight. In this way, one avoids constructing globally consistent meshes
for $\M$ and $\p\M$. 
\end{itemize}
\end{remark}

\section{Main Results}
The main contribution in this paper is to establish the convergence results for the point integral method
for solving the problem~\eqref{eqn:neumann}.
To simplify the notation and make the proof concise, in the analysis, we consider the homogeneous Neumann boundary
conditions, i.e. 
\begin{equation} 
\left\{\begin{array}{rl}
      -\Delta_\mathcal{M} u(\bx)=f(\bx),&\bx\in \mathcal{M} \\
      \frac{\p u}{\p \bn}(\bx)=0,& \bx\in \p \mathcal{M}
\end{array}
\right.  
\label{eqn:neumann-homo} 
\end{equation}
 The analysis can be easily generalized to 
the non-homogeneous boundary conditions.

The corresponding numerical scheme is 
\begin{equation}
\invt\sum_{\bfp_j \in P} R_t(\bfp_i,\bfp_j)(u_i - u_j)V_j  = \sum_{\bfp_j \in P} \bar{R}_t(\bfp_i,\bfp_j) f_jV_j.
\label{eqn:dis-homo}
\end{equation}
where $f_j=f(\bfp_j)$.

Before proving the convergence of the point integral method, we need to clarify the meaning of the convergence between the point cloud $(P,\mathbf{V})$
and the manifold $\M$. In this paper, we consider the convergence in the sense that $h(P,\mathbf{V},\M)\rightarrow 0$ where $h(P,\mathbf{V},\M)$ is the 
{\it integral accuracy index} defined as following,
\begin{definition}[Integral Accuracy Index]
  \label{def:h}
For the point cloud $(P,\mathbf{V})$ which samples the manifold $\M$, the integral accuracy index $h(P,\mathbf{V},\M)$ is defined as
\begin{equation*}
h(P,\mathbf{V},\M)=\sup_{f\in C^1(\M)}\frac{\left|\int_\M f(\by) \mathd\mu_\by - \sum_{\bfp_i\in P} f(\bfp_i)V_i\right|}{|\text{\rm supp}(f)|\|f\|_{C^1(\M)}}.
\end{equation*}
where $\|f\|_{C^1(\M)} = \|f\|_\infty +\|\nabla f\|_\infty$ and
$|\text{\rm supp}(f)|$ is the volume of the support of $f$.
\end{definition}

Using the definition of integrable index, we say that the point cloud $(P,\mathbf{V})$ converges to the manifold $\M$ if 
$h(P,\mathbf{V},\M)\rightarrow 0$. In the convergence analysis, we assume that $h(P,\mathbf{V},\M)$ is small enough.
\begin{remark} In some sense, $h(P,\mathbf{V},\M)$ is a measure of the density of the point cloud.
\begin{itemize}
\item If the volume weight $\bV$ comes from a mesh, one can obtain the integral accuracy index $h(P,\mathbf{V},\M)=O(\rho)$ where 
$\rho$ is the size of the elements in
the mesh and the angle between the normal space of an element and the normal space of $\M$ at the
vertices of the element is of order $\rho^{1/2}$~\cite{Wardetzky06}. 
\item If the point cloud is sampled from some distribution, 
from central limit theorem, $h(P,\mathbf{V},\M)\sim O(1/\sqrt{n})$ where $n$ is the number of point in $P$.
\end{itemize}
\end{remark}
\begin{remark}
  To consider the non-homogeneous Neumann boundary condition or Dirichlet boundary condition, we have to also assume that $h(S,\mathbf{A},\p\M)\rightarrow 0$,
where $S$ is the point set sample the boundary $\p\M$ and $\mathbf{A}$ is the corresponding volume weight on the boundary $\p\M$.
\end{remark}

To get the convergence, we also need some assumptions on the regularity of the submanifold $\M$ and
the integral kernel function $R$.
\begin{assumption}
\label{assumptions}
\begin{itemize}
\item[]
\item \rm Smoothness of the manifold: $\M, \p\M$ are both compact and $C^\infty$ smooth $k$-dimensional submanifolds isometrically embedded in a Euclidean space $\mathbb{R}^d$.
\item \rm Assumptions on the kernel function $R(r)$:
\begin{itemize}
\item[\rm (a)] \rm Smoothness: $R\in C^2(\mathbb{R}^+)$;
\item[(b)] Nonnegativity: $R(r)\ge 0$ for any $r\ge 0$.
\item[(c)] Compact support:
$R(r) = 0$ for $\forall r >1$;
\item[(d)] Nondegeneracy:
 $\exists \delta_0>0$ so that $R(r)\ge\delta_0$ for $0\le r\le\frac{1}{2}$.
\end{itemize}
\end{itemize}
\end{assumption}
\begin{remark}
  The assumption on the kernel function is very mild. The compact support assumption can be relaxed to exponentially decay, like Gaussian kernel.
 In the nondegeneracy assumption, $1/2$ may be replaced by a positive number $\theta_0$ with $0<\theta_0<1$. Similar assumptions on the kernel function 
is also used in analysis the nonlocal diffusion problem \cite{DLZ13}.
\end{remark}

\begin{remark}
It is for the sake of simplicity that $R$ is assumed to be compactly supported. After some mild modifications
of the proof,
the same convergence results also hold for any kernel function that decays exponentially, like the Gaussian kernel 
$G_t(\bx, \by) = C_t\exp\left(-\frac{|\bx-\by|^2}{4t}\right)$. 
In fact, for any $s\ge 1$ and any $\epsilon>0$, 
the $H^s$ mass of the Gaussian kernel over the domain $\Omega=\{ \by \in \M | |\bx-\by|^2\geq t^{1+\epsilon}\}$
decays faster than any polynomial in $t$ as $t$ goes to $0$, i.e., 
$\lim_{t\rightarrow 0} \frac{\|G_t(\bx, \by)\|_{H^s(\Omega)}}{t^\alpha} = 0$ for any $\alpha$. In this way, 
we can bound any influence of the integral outside a compact support. 
\end{remark}
All the analysis in this paper is under the assumptions in Assumption \ref{assumptions} and $h(P,\mathbf{V},\M)$, $t$ are small enough. 
In the theorems and the proof, without introducing any confusions, we omit the statement of the assumptions.

The solution of the point integral method is a vector $\bf u$ while
the solution of the problem~\eqref{eqn:neumann-homo} is a function defined on $\M$. To make them 
comparable, for any solution $\bfu = (u_1, \cdots, u_n)^t$ to the problem~\eqref{eqn:dis-homo}, 
we construct a function on $\M$ 
\begin{equation}
\label{eqn:interp_neumann}
I_{\bff}(\bfu) (\bx) = \frac{ \sum_{\bfp_j \in P} \hkxpj u_j V_j - t\sum_{\bfp_j \in P} \rhkxpj f_jV_j} {\sum_{\bfp_j \in P} \hkxpj V_j }.
\end{equation}
It is easy to verify that $I_{\bff}(\bfu)$ interpolates $\bfu$ at the sample points $P$, i.e., 
$I_{\bff}(\bfu)(\bfp_j) = u_j$ for any $\bfp_j\in P$. 
The following theorem guarantees the convergence of the  point integral method. 
\begin{theorem}
Let $u$ be the solution to Problem~\eqref{eqn:neumann-homo} with  $f\in C^1(\M)$ and  
the vector $\bfu$ be the solution to the problem~\eqref{eqn:dis-homo}. Then there exists constants $C$ and $T_0$ 
only depend on $\M$,
such that for any $t\le T_0$
\begin{equation}
  \|u-I_{\bff}(\bfu)\|_{H^1(\M)} \leq C\left(t^{1/2} + \frac{h(P,\bf{V},\M)}{t^{3/2}}\right)\|f\|_{C^1(\M)}. 
\end{equation}
where $h(P,\bf{V},\M)$ is the integral accuracy index.
\label{thm:poisson_neumann}
\end{theorem}

\section{Structure of the Proof}
\label{sec:intermediate}
To simplify the notation, we introduce an integral operator,
\begin{equation}
  \label{eq:Lt}
L_t u=\invt\int_{\M} \hk(u(\bx) - u(\by))\mathd\mu_\by
\end{equation}
Roughly speaking,
 the proof the convergence includes estimate of the truncation error $L_t(u-I_{\bff}(\bfu))$
and the stability of the integral operator $L_t$. Here $u(\bx)$ is the solution
of the problem~\eqref{eqn:neumann-homo} and $\bfu$ is the solution of
the problem~\eqref{eqn:dis-homo}.


First, we have following theorem regarding the stability of the operator $L_t$. 
\begin{theorem}
Let $u(\bx)$ solves the integral equation
\begin{eqnarray*}
  L_t u = r(\bx)
\end{eqnarray*}
where $r\in H^1(\M)$ with $\int_\M r(\bx)\mathd\mu_\bx=0$.
There exist constants $C>0, T_0>0$ independent on $t$, such that
\begin{eqnarray*}
  \|u\|_{H^1(\M)}\le C\left(\|r\|_{L^2(\M)}+t\|\nabla r\|_{L^2(\M)}\right)
\end{eqnarray*}
as long as $t\le T_0$.
\label{thm:regularity}
\end{theorem}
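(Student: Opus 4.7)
The plan is to establish the bound in two stages: first an $L^2$ estimate on $u$ via the standard energy identity, then an $H^1$ estimate via the pointwise form of the equation. Both stages rely on coercivity results for $L_t$ (the analogues of elliptic coercivity for the Laplace--Beltrami operator) that the paper announces will be established independently.

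I would start by reducing to mean-zero $u$: since $r-\bar{r}$ has zero mean and $L_t$ annihilates constants, the equation determines $u$ up to an additive constant, so one can assume $\int_\M u\,\mathd\mu_\bx=0$. Testing the equation against $u$ and using the symmetry $R_t(\bx,\by)=R_t(\by,\bx)$ of the kernel gives the standard nonlocal energy identity
\[
\frac{1}{2t}\int_\M\!\int_\M R_t(\bx,\by)\bigl(u(\bx)-u(\by)\bigr)^2 \mathd\mu_\by\,\mathd\mu_\bx \;=\; \int_\M u(r-\bar{r})\,\mathd\mu_\bx.
\]
A nonlocal Poincar\'e inequality---the $L^2$ part of the coercivity of $L_t$ alluded to in the introduction---bounds the left-hand side from below by $c\,\|u\|_{L^2(\M)}^2$ on mean-zero functions. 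Combined with Cauchy--Schwarz on the right, this yields $\|u\|_{L^2(\M)}\le C\|r\|_{L^2(\M)}$, which delivers the $\|r\|_{L^2}$ piece of the desired bound.

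For the gradient estimate I would exploit the pointwise rewriting
\[
u(\bx) \;=\; \frac{1}{D_t(\bx)}\int_\M R_t(\bx,\by)\,u(\by)\,\mathd\mu_\by \;-\; \frac{t\bigl(r(\bx)-\bar{r}\bigr)}{D_t(\bx)},
\]
where $D_t(\bx)=\int_\M R_t(\bx,\by)\,\mathd\mu_\by$ is bounded uniformly below. Differentiating intrinsically on $\M$, the second piece contributes $O\bigl(t\|\nabla r\|_{L^2}+t^{1/2}\|r\|_{L^2}\bigr)$; the $t^{1/2}\|r\|_{L^2}$ arises from $\nabla D_t$, which is pointwise of size $t^{-1/2}$ but is supported in a boundary strip of width $\sqrt{t}$ and is therefore $L^2$-bounded. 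For the first (smoothed) piece I would integrate by parts in $\by$: using $\nabla_\bx R_t=-\nabla_\by R_t$ in the ambient Euclidean sense and then projecting onto $T_\by\M$, one trades the derivative on the kernel for a derivative on $u$ at the cost of a boundary contribution on $\p\M$ and a curvature correction of order $\sqrt{t}$. The resulting commutator identity has the schematic form $\nabla L_t u = L_t(\nabla u) + (\text{boundary and curvature})$; substituting $-L_t u=r-\bar{r}$ and testing against $\nabla u$, with the $H^1$ coercivity of $L_t$ used to absorb the diagonal $L_t(\nabla u)$ term, closes the estimate and produces the $t\|\nabla r\|_{L^2}$ contribution.

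The main obstacle is the gradient step. First, the nonlocal integration by parts lives on a curved submanifold: the ambient gradient $\nabla_\bx R_t$ and the intrinsic tangential gradient disagree by terms involving the second fundamental form, and showing that these curvature corrections are truly lower order---so that they can be absorbed on the left after applying the $H^1$ coercivity---requires the basic kernel estimates promised in Section~6. Second, the boundary strip of width $O(\sqrt{t})$ where $D_t$ is not nearly constant produces both boundary integrals on $\p\M$ and a large pointwise $\nabla D_t$; taming these in $L^2$ uniformly as $t\to 0$ is precisely what allows the final bound to read $\|r\|_{L^2}+t\|\nabla r\|_{L^2}$, rather than something involving a negative power of $t$.
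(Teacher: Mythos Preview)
Your $L^2$ step and the pointwise decomposition $u = v + t(r-\bar{r})/D_t$, with $v$ the kernel average of $u$, match the paper exactly. The divergence is in how you control $\|\nabla v\|_{L^2}$, and there your sketch has a real gap.

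The paper does not commute $\nabla$ through $L_t$ or integrate by parts in $\by$. Instead it invokes the coercivity result Theorem~\ref{thm:elliptic_v}: the nonlocal energy $\langle u, L_t u\rangle_\M$ dominates $\|\nabla v\|_{L^2}^2$, where $v$ is \emph{precisely} the smoothed function in your decomposition. Since the energy identity from the $L^2$ step already gives $\langle u, L_t u\rangle_\M = \int_\M u(r-\bar{r})\,\mathd\mu_\bx \le C\|r\|_{L^2}^2$, one obtains $\|\nabla v\|_{L^2}\le C\|r\|_{L^2}$ with no derivative of $r$ appearing at all. The term $t\|\nabla r\|_{L^2}$ in the theorem arises solely from differentiating the explicit remainder $t(r-\bar{r})/D_t$.

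Your commutator route $\nabla L_t u = L_t(\nabla u)+\text{errors}$, tested against $\nabla u$ and closed via coercivity applied componentwise to $\nabla u$, would produce at best $\|\nabla u\|_{L^2}\le C\|\nabla r\|_{L^2}+\text{errors}$: the pairing $\langle \nabla u,\nabla r\rangle$ contributes $\|\nabla r\|_{L^2}$ with no $t$ in front, so the stated bound $\|r\|_{L^2}+t\|\nabla r\|_{L^2}$ does not follow. Moreover, applying the $L^2$ coercivity (Theorem~\ref{thm:elliptic_L_t}) to each component of $\nabla u$ requires those components to have zero mean, generating further boundary integrals on $\p\M$. The paper avoids all of this by bounding $\|\nabla v\|$ rather than $\|\nabla u\|$, reusing the same energy identity a second time.

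A minor side point: your claim that $\nabla D_t$ is supported in a boundary strip is false on a curved submanifold---the mass $D_t=w_t$ varies throughout $\M$ because of curvature. This is harmless, since the pointwise bound $|\nabla w_t|\le Ct^{-1/2}$ alone already gives $t^2\|r\,\nabla w_t/w_t^2\|_{L^2}^2\le Ct\|r\|_{L^2}^2$, which is all that is needed for that piece.
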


To apply the stability result, we need $L_2$ estimate of $L_t(u-I_{\bff}(\bfu))$ and $\nabla L_t(u-I_{\bff}(\bfu))$.
These truncation errors are analyzed in following two theorems by spliting the truncation error $L_t(u-I_{\bff}(\bfu))$
\begin{equation*}
  L_t(u-I_{\bff}(\bfu))=L_t(u-u_t))+L_t(u_t-I_{\bff}(\bfu))
\end{equation*}
where $u_t$ is the solution of the integral equation 
\begin{equation}
  \label{eq:integral-homo}
\invt\int_{\M} \hk(u(\bx) - u(\by))\mathd\mu_\by=\int_\M f(\by)\bar{R}_t(\bx,\by)\mathd \mu_\by.
\end{equation}

For the second term, we have
\begin{theorem}
Let $u_t(\bx)$ be the solution of the problem~~\eqref{eq:integral-homo} and $\bfu$ be the solution of
the problem~\eqref{eqn:dis-homo}. If
$f\in C^1(\M)$ ,
 then there exists constants
$C, T_0$ depending only on $\M$, so that
\begin{eqnarray}
\|L_{t} \left(I_{\bff}\bfu - u_{t}\right)\|_{L^2(\M)} &\leq& \frac{Ch(P,\mathbf{V},\M)}{t^{3/2}}\|f\|_{C^1(\M)},\\
\label{eqn:dis_error_l2}
\|\nabla L_{t} \left(I_{\bff}\bfu - u_{t}\right)\|_{L^2(\M)} &\leq& \frac{Ch(P,\mathbf{V},\M)}{t^{2}}\|f\|_{C^1(\M)}.
\label{eqn:dis_error_dl2}
\end{eqnarray}
as long as $t\le T_0$ and $\frac{h(P,\mathbf{V},\M)}{\sqrt{t}}\le T_0$, $h(P,\mathbf{V},\M)$ is the integral difference index in 
Definition \ref{def:h}.
\label{thm:dis_error}
\end{theorem}

In the analysis, we found that the error term $L_t(u-u_t)$ has boundary layer structure. In the interior region, 
it is $O(\sqrt{t})$ and in a layer adjacient to the boundary with width $O(\sqrt{t})$, the error is $O(1)$.
\begin{theorem}
 Let $u(\bx)$ be the solution
of the problem~\eqref{eqn:neumann-homo} and $u_t(\bx)$ be the solution of the corresponding
integral equation \eqref{eq:integral-homo}. Let
\begin{align}
\label{eq:error_boundary}
&I_{bd} =\sum_{j=1}^d \int_{\p\M}n^j(\by)(\bx-\by)\cdot\nabla(\nabla^ju(\by))\rhk p(\by)\mathd \tau_\by,
\end{align}
and
\begin{align}
L_t (u- u_t)=I_{in}+I_{bd}\nonumber.
\end{align}
where $\bn(\by)=(n^1(\by),\cdots,n^d(\by))$ is the out normal vector of $\p\M$ at $\by$, $\nabla^j$ is the $j$th component of gradient $\nabla$.

If $u\in H^3(\M)$, then there exists constants
$C, T_0$ depending only on $\M$ and $p(\bx)$, so that,
\begin{eqnarray}
\label{eqn:integral_error_int}
\left\|I_{in}\right\|_{L^2(\M)}\le Ct^{1/2}\|u\|_{H^3(\mathcal{M})},\quad
\left\|\nabla I_{in}\right\|_{L^2(\M)} \leq C\|u\|_{H^3(\mathcal{M})},
\end{eqnarray}
as long as $t\le T_0$.
\label{thm:integral_error}
\end{theorem}

To utilizing the boundary layer structure, we need a stability result specifically for the boundary term.
\begin{theorem}
Let $u(\bx)$ solves the integral equation
\begin{eqnarray*}
  L_t u = \int_{\p\M}\mathbf{b}(\by)\cdot(\bx-\by)\rhk \mathd \tau_\by-\bar{b}
\end{eqnarray*}
where $|\M|=\int_\M \mathd \mu_\bx$ and 
\begin{eqnarray*}
  \bar{b}=\frac{1}{|\M|}\int_\M \left(\int_{\p\M}\mathbf{b}(\by)\cdot(\bx-\by)\rhk \mathd \tau_\by\right)\mathd\bx.
\end{eqnarray*}
Then, there exist constant $C>0, T_0>0$ independent on $t$, such that
\begin{eqnarray*}
  \|u\|_{H^1(\M)}\le C\sqrt{t}\;\|\mathbf{b}\|_{H^1(\M)}.
\end{eqnarray*}
as long as $t\le T_0$.
\label{thm:regularity_boundary}
\end{theorem}

Theorem~\ref{thm:poisson_neumann} is an easy corollary from Theorems \ref{thm:regularity}, \ref{thm:dis_error}, \ref{thm:integral_error} and \ref{thm:regularity_boundary}. 
Theorem \ref{thm:dis_error} and Theorem \ref{thm:regularity} imply that 
 $$\|u_t-I_{\bff}(\bfu)\|_{H^1(\M)}=O\left(\frac{h(P,\mathbf{V},\M)}{t^{3/2}}\right).$$ 
and Theorem \ref{thm:regularity}, \ref{thm:integral_error} and \ref{thm:regularity_boundary} imply
 $$\|u-u_t\|_{H^1(\M)}=O\left(t^{1/2}\right),$$ 
which prove Theorem~\ref{thm:poisson_neumann}. 

In the rest of the paper, we prove Theorem 
  \ref{thm:regularity}, \ref{thm:dis_error}, \ref{thm:integral_error} and \ref{thm:regularity_boundary} respectively.



\section{Error analysis of the integral approximation (Theorem \ref{thm:integral_error})}
\label{sec:err_int}



In this section, we need to introduce a special parametrization of the manifold $\M$. This parametrization is based on following proposition.
\begin{proposition}
Assume both $\M$ and $\p \M$ are $C^2$ smooth and
$\sigma$ is the minimum of the reaches of $\M$ and $\p \M$. 
For any point $\bx\in \M$, there is a neighborhood $U\subset \M$ of $\bx$, 
so that there is a parametrization 
$\Phi: \Omega \subset \R^k \rightarrow U$ satisfying the following conditions. For any $\rho \leq 0.1$, 
\begin{enumerate}
\item[(i)] $\Omega$ is convex and contains at least half of the ball
$B_{\Phi^{-1}(\bx)}(\frac{\rho}{5} \sigma)$, 
i.e., $vol(\Omega\cap B_{\Phi^{-1}(\bx)}(\frac{\rho}{5} \sigma)) > \frac{1}{2}(\frac{\rho}{5}\sigma)^k w_k$ where $w_k$ is the volume of unit ball in $\R^k$;
\item[(ii)] $B_{\bx}(\frac{\rho}{10} \sigma) \cap \M \subset U$. 
\item[(iii)] The determinant the Jacobian of $\Phi$ is bounded:
$(1-2\rho)^k \leq |D\Phi|  \leq (1+2\rho)^k$ over $\Omega$.  
\item[(iv)] For any points $\by, \bz \in U$, 
$1-2\rho \leq \frac{|\by-\bz|}{\left|\Phi^{-1}(\by) - \Phi^{-1}(\bz)\right|}  \leq 1+3\rho$.
\end{enumerate}
\label{prop:local_param}
\end{proposition}
This proposition basically says there exists a local parametrization of small distortion
if $(\M, \p\M)$ satisfies certain smoothness, and moreover, the parameter domain is 
convex and big enough. The proof of this proposition can be found in Appendix A. Next, we introduce a special parametrization of the manifold $\M$.

Let $\rho=0.1$, $\sigma$ be the minimum of the reaches of $\M$ and $\p \M$ and $\delta=\rho \sigma/20$. For any $\bx\in \M$, denote
\begin{eqnarray}
\label{eq:net}
  B_{\bx}^\delta=\left\{\by\in \mathcal{M}: |\bx-\by|\le \delta\right\},\quad \mathcal{M}_{\bx}^t=\left\{\by\in \mathcal{M}: |\bx-\by|^2\le 4t\right\}
\end{eqnarray}
and we assume $t$ is small enough such that $2\sqrt{t}\le \delta$.

Since the manifold $\mathcal{M}$ is compact, there exists a $\delta$-net, $\mathcal{N}_\delta=\{ \bq_i\in \mathcal{M},\;i=1,\cdots,N\}$, such that
\begin{eqnarray}
  \mathcal{M}\subset \bigcup_{i=1}^N B_{\bq_i}^\delta.\nonumber
\end{eqnarray}
and there exists a partition of $\M$, $\{\mathcal{O}_i, \;i=1,\cdots,N\}$, such that $\mathcal{O}_i\cap \mathcal{O}_j=\emptyset,\; i\neq j$ and
\begin{equation*}
  \M=\bigcup_{i=1}^N \mathcal{O}_i,\quad \mathcal{O}_i\subset B_{\bq_i}^\delta,\quad i=1,\cdots,N.
\end{equation*}

Using Proposition \ref{prop:local_param}, there exist
a parametrization $\Phi_i: \Omega_i\subset\mathbb{R}^k \rightarrow U_i\subset \mathcal{M},\; i=1,\cdots, N$, such that
\begin{itemize}
\item[1.] (Convexity) $B_{\bq_i}^{2\delta}\subset U_i$ and $\Omega_i$ is convex.
\item[2.] (Smoothness) $\Phi_i\in C^3(\Omega_i)$;
\item[3.] (Locally small deformation) 
For any points $\theta_1, \theta_2\in \Omega_i$, 
$$\frac{1}{2}\left|\theta_1-\theta_2\right| \leq \left\|\Phi_i(\theta_1)-\Phi_i(\theta_2)\right\|  
\leq 2\left|\theta_1-\theta_2\right|.$$
\end{itemize}
Using the partition, $\{\mathcal{O}_i, \;i=1,\cdots,N\}$, for any $\by\in \M$, there exists unique $J(\by)\in \{1,\cdots,N\}$, 
such that 
\begin{equation}
\label{def:index-j}
\by\in \mathcal{O}_{J(\by)}\subset B_{\bq_{J(\by)}}^\delta.
\end{equation}
Moreover, using the condition, $2\sqrt{t}\le \delta$, we have $\mathcal{M}_{\by}^t \subset B_{\bq_{J(\by)}}^{2\delta}\subset U_{J(\by)}$.
Then $\Phi_{J(\by)}^{-1}(\bx)$ and $\Phi_{J(\by)}^{-1}(\by)$ are both well defined for any $\bx\in \mathcal{M}_{\by}^t$.

Now, we define an auxiliary function, $\eta(\bx,\by)$ for any $\by\in \M,\;\bx\in \mathcal{M}_{\by}^t$. 
Let 
\begin{equation}
\label{def:eta}
\xi(\bx,\by)=\Phi_{J(\by)}^{-1}(\bx)-\Phi_{J(\by)}^{-1}(\by)\in \mathbb{R}^k,\quad \eta(\bx,\by)=\xi(\bx,\by)\cdot \p \Phi_{J(\by)}(\alpha(\bx,\by))\in \mathbb{R}^d, 
\end{equation}
where $\alpha(\bx,\by)=\Phi^{-1}_{J(\by)}(\by)$ and 
$\p$ is the gradient operator in the parameter space, i.e.
\begin{equation*}
  \p \Phi_j(\theta)=\left(\frac{\p \Phi_j}{\p \theta_1}(\theta),\frac{\p \Phi_j}{\p \theta_2}(\theta),\cdots,\frac{\p \Phi_j}{\p \theta_k}(\theta)\right),
\quad \theta\in \Omega_j\subset \mathbb{R}^k.
\end{equation*}


Now we state the proof of Theorem \ref{thm:integral_error}.
\begin{proof}

Let $r(\bx)=-(L_t u-L_tu_t)$ be the residual, then we have
\begin{eqnarray}
  r(\bx)&=&-\frac{1}{t}\int_{\M}R_t(\bx,\by)(u(\bx)-u(\by))\mathd\mu_\by+2\int_{\p\M}\bar{R}_t(\bx,\by)g(\by)\mathd\tau_\by-\int_{\M}\bar{R}_t(\bx,\by)f(\by)\mathd\mu_\by
\nonumber\\
&=&-\frac{1}{t}\int_{\M}R_t(\bx,\by)(u(\bx)-u(\by))\mathd\mu_\by+\int_{\M}\bar{R}_t(\bx,\by)\Delta_\M u(\by)\mathd \mu_\by
\nonumber\\
&&+\frac{1}{t}\int_{\M}(\bx-\by)\cdot \nabla u(\by)R_t(\bx,\by)\mathd \mathd \mu_\by\nonumber\\
&=& -\frac{1}{t}\int_{\M}R_t(\bx,\by)(u(\bx)-u(\by)-(\bx-\by)\cdot \nabla u(\by))\mathd\mu_\by+\int_{\M}\bar{R}_t(\bx,\by)\Delta_\M u(\by)\mathd \mu_\by.
\nonumber
\end{eqnarray}
Here we use that fact that 
\begin{eqnarray}
  \int_{\M}\bar{R}_t(\bx,\by)f(\by)\mathd\mu_\by=\int_{\M}\bar{R}_t(\bx,\by)\Delta_\M u(\by)\mathd \mu_\by,\nonumber
\end{eqnarray}
and
\begin{eqnarray}
  \int_{\p\M}\bar{R}_t(\bx,\by)g(\by)\mathd\tau_\by&=&\int_{\p\M}\bar{R}_t(\bx,\by)\frac{\p u}{\p \bn}(\by)\mathd\tau_\by\nonumber\\
&=&\int_{\M}\bar{R}_t(\bx,\by)\Delta_\M u(\by)\mathd \mu_\by+\int_{\M}\nabla_\by \bar{R}_t(\bx,\by)\cdot \nabla u(\by)\mathd \mu_\by
\nonumber\\
&=&\int_{\M}\bar{R}_t(\bx,\by)\Delta_\M u(\by)\mathd \mu_\by+\frac{1}{2t}\int_{\M}(\bx-\by)\cdot \nabla u(\by)R_t(\bx,\by)\mathd \mu_\by,\nonumber
\end{eqnarray}
where the last equality comes from:
\begin{eqnarray}
\label{eqn:grad-m1}
  && \int_\mathcal{M}\nabla u(\by)\cdot \nabla_\by \bar{R}_t(\bx,\by) \mathd\mu_\by \\ 
&=&\frac{1}{2t}\int_\mathcal{M}\left(\p_{i'}\Phi^l g^{i'j'}\p_{j'}u(\by)\right)\;
\left( \p_{m'}\Phi^l g^{m'n'}\p_{n'}\Phi^j (x^j-y^j) R_t(\bx,\by)\right) \mathd\mu_\by \nonumber\\
&=&\frac{1}{2t}\int_\mathcal{M}\left( \p_{n'}\Phi^j g^{j'n'}\p_{j'}u(\by)\right)\;
\left(  (x^j-y^j) R_t(\bx,\by)\right) \mathd\mu_\by \nonumber\\
&=&\frac{1}{2t}\int_\mathcal{M}(x^j-y^j)\nabla^j u(\by) R_t(\bx,\by)  \mathd\mu_\by\nonumber\\
&=&\frac{1}{2t}\int_{\M}(\bx-\by)\cdot \nabla u(\by)R_t(\bx,\by)\mathd \mu_\by \nonumber.
\end{eqnarray}
Here, $\Phi^i,\; i=1,\cdots, d$ is the $i$th component of the parameterization function $\Phi$ and the parameterization function $\Phi=\Phi_{J(\by)}$, $J(\by)$ is the index function 
given in \eqref{def:index-j}. 
In the rest of the proof, without introducing any confusion, we always drop the subscript of the parameterization function. 

First, we split the residual $r(\bx)$ to four terms 
\begin{align}
  r(\bx)
=&r_1(\bx)+r_2(\bx)+r_3(\bx)-r_4(\bx)\nonumber
\end{align}
where
\begin{eqnarray}
r_1(\bx)&=&\frac{1}{t}\int_{\M}\left(u(\bx)-u(\by)-(\bx-\by)\cdot \nabla u(\by)-\frac{1}{2}\eta^i\eta^j(\nabla^i\nabla^j u(\by))\right)R_t(\bx,\by) \mathd\mu_\by,\nonumber\\
r_2(\bx)&=&\frac{1}{2t}\int_{\M}\eta^i\eta^j(\nabla^i\nabla^j u(\by))R_t(\bx,\by) \mathd\mu_\by
-\int_\M \eta^i(\nabla^i\nabla^j u(\by)\nabla^j\bar{R}_t(\bx,\by) \mathd \mu_\by,\nonumber\\
r_3(\bx)&=& \int_\M \eta^i(\nabla^i\nabla^j u(\by)\nabla^j\bar{R}_t(\bx,\by) \mathd \mu_\by
+\int_\M \mbox{div} \; \left(\eta^i(\nabla^i\nabla u(\by)\right)
\bar{R}_t(\bx,\by) \mathd \mu_\by,\nonumber\\
r_4(\bx)&=&\int_\M \mbox{div} \; \left(\eta^i(\nabla^i\nabla u(\by)\right)\bar{R}_t(\bx,\by) \mathd \mu_\by
+ \int_{\M}\Delta_\M u(\by)\bar{R}_t(\bx,\by) \mathd \mu_\by.\nonumber
\nonumber
\end{eqnarray}
where $\nabla^i,\; i=1,\cdots,d$ is the $i$th component of the gradient $\nabla$, $\eta^i, \; i=1,\cdots,d$ is the $i$th component of $\eta(\bx,\by)$ defined in \eqref{def:eta}. 
To simplify the notation, 
we drop the variable $(\bx,\by)$ in the function $\eta(\bx,\by)$.

Next, we will prove the theorem by estimating above four terms one by one. 
First, we consider $r_1$. Let
$$d(\bx,\by)=u(\bx)-u(\by)-(\bx-\by)\cdot \nabla u(\by)-\frac{1}{2}\eta^i\eta^j(\nabla^i\nabla^j u(\by)).$$
we have
\begin{eqnarray}
  \int_\M |r_1(\bx)|^2\mathd\mu_\bx&=&  \int_\M \left|\int_\M R_t(\bx,\by)d(\bx,\by) \mathd \mu_\by\right|^2\mathd\mu_\bx\nonumber\\
&\le & (\max_\by  )^2 \int_\M \left(\int_\M R_t(\bx,\by)\mathd \mu_\by\right)\left(\int_\M R_t(\bx,\by)|d(\bx,\by)|^2\mathd \mu_\by\right)\mathd\mu_\bx\nonumber\\
&\le & C\int_\M \int_\M R_t(\bx,\by)|d(\bx,\by)|^2\mathd \mu_\by\mathd\mu_\bx\nonumber
\end{eqnarray}
and
\begin{eqnarray}
 \int_\M \int_\M R_t(\bx,\by)|d(\bx,\by)|^2\mathd \mu_\by\mathd\mu_\bx&=&
 \sum_{i=1}^N \int_\M\int_{\mathcal{O}_i}R_t(\bx,\by)|d(\bx,\by)|^2\mathd \mu_\by\mathd\mu_\bx\nonumber\\
&=&\sum_{i=1}^N \int_{\mathcal{O}_i}\left(\int_{\M_{\by}^t}R_t(\bx,\by)|d(\bx,\by)|^2\mathd \mu_\bx\right)\mathd\mu_\by.\nonumber
\end{eqnarray}
Using Newton-Leibniz formula, we get
\begin{eqnarray}
  d(\bx,\by)&=&u(\bx)-u(\by)-(\bx-\by)\cdot \nabla u(\by)-\frac{1}{2}\eta^i\eta^j(\nabla^i\nabla^j u(\by))\nonumber\\
&=&\xi^{i}\xi^{i'}\int_0^1\int_0^1\int_0^1s_1\frac{d}{d s_3}\left(\p_{i}\Phi^j(\al+s_3 s_1\xi)\p_{i'}\Phi^{j'}(\al+s_3s_2 s_1\xi)\nabla^{j'}\nabla^ju(\Phi(\al+s_3s_2s_1 \xi))\right)
\mathd s_3\mathd s_2\mathd s_1\nonumber\\
&=&\xi^{i}\xi^{i'}\xi^{i''}\int_0^1\int_0^1\int_0^1s_1^2s_2\p_{i}\Phi^j(\al+s_3 s_1\xi)\p_{i''}\p_{i'}\Phi^{j'}(\al+s_3s_2 s_1\xi)\nabla^{j'}\nabla^ju(\Phi(\al+s_3s_2s_1 \xi))
\mathd s_3\mathd s_2\mathd s_1\nonumber\\
&&+\xi^{i}\xi^{i'}\xi^{i''}\int_0^1\int_0^1\int_0^1s_1^2\p_{i''}\p_{i}\Phi^j(\al+s_3 s_1\xi)\p_{i'}\Phi^{j'}(\al+s_3s_2 s_1\xi)\nabla^{j'}\nabla^ju(\Phi(\al+s_3s_2s_1 \xi))
\mathd s_3\mathd s_2\mathd s_1\nonumber\\
&&+\xi^{i}\xi^{i'}\xi^{i''}\int_0^1\int_0^1\int_0^1s_1^2s_2\p_{i}\Phi^j(\al+s_3s_2 s_1\xi)\p_{i'}\Phi^{j'}(\al+s_3s_2 s_1\xi)\p_{i''}\Phi^{j''}(\al+s_3s_2 s_1\xi)\nonumber\\
&&\hspace{4cm}\nabla^{j''}\nabla^{j'}\nabla^ju(\Phi(\al+s_3s_2s_1 \xi))\mathd s_3\mathd s_2\mathd s_1\nonumber
\end{eqnarray}
Here, $\alpha=\alpha(\bx,\by)=\Phi_{J(\by)}^{-1}(\by)$, $\xi=\xi(\bx,\by)=\Phi_{J(\by)}^{-1}(\bx)-\Phi_{J(\by)}^{-1}(\by)$.
In above derivation, we need the convexity property of the parameterization function to make sure all the integrals are well defined.

Using above equality and the smoothness of the parameterization functions, it is easy to show that
\begin{eqnarray}
&&  \int_{\mathcal{O}_i}\left(\int_{\M_{\by}^t}R_t(\bx,\by)|d(\bx,\by)|^2\mathd \mu_\bx\right)\mathd\mu_\by\nonumber\\
&\le & Ct^3 \int_0^1\int_0^1\int_0^1\int_{\mathcal{O}_i}\int_{\M_{\by}^t}R_t(\bx,\by)\left|D^{2,3}u(\Phi_{J(\by)}(\al+s_3s_2s_1 \xi))\right|^2\mathd \mu_\bx\mathd\mu_\by
\mathd s_3\mathd s_2\mathd s_1\nonumber\\
&\le & Ct^3 \max_{0\le s\le 1}\int_{\mathcal{O}_i}\int_{\M_{\by}^t}R_t(\bx,\by)\left|D^{2,3}u(\Phi_{i}(\al+s \xi))\right|^2\mathd \mu_\bx\mathd\mu_\by,\nonumber
\end{eqnarray}
where we use the fact that $J(\by)=i,\; \by \in \mathcal{O}_i$ and 
\begin{eqnarray*}
  \left|D^{2,3}u(\bx)\right|^2=\sum_{j,j',j''=1}^d|\nabla^{j''}\nabla^{j'}\nabla^ju(\bx)|^2
+\sum_{j,j'=1}^d|\nabla^{j'}\nabla^ju(\bx)|^2.
\end{eqnarray*}

Let $\bz_i=\Phi_i(\al+s \xi),\; 0\le s\le 1$, then for any $\by\in \mathcal{O}_i\subset B_{\bq_i}^\delta$ and $\bx\in \M_{\by}^t$,
\begin{eqnarray*}
  |\bz_i-\by|\le 2s|\xi|\le 4s|\bx-\by|\le 8s\sqrt{t},\quad |\bz_i-\bq_i|\le |\bz_i-\by|+|\by-\bq_i|\le \delta+8s\sqrt{t}.
\end{eqnarray*}
We can assume that $t$ is small enough such that $8\sqrt{t}\le \delta$, then we have
\begin{eqnarray*}
  \bz_i\in B_{\bq_i}^{2\delta}.
\end{eqnarray*}
After changing of variable, we obtain
\begin{eqnarray}
&&  \int_{\mathcal{O}_i}\int_{\M_{\by}^t}R_t(\bx,\by)\left|D^{2,3}u(\Phi_i(\al+s \xi))\right|^2\mathd \mu_\bx\mathd\mu_\by\nonumber\\
&\le & \frac{C}{\delta_0} \int_{\mathcal{O}_i}\int_{B_{\bq_i}^{2\delta}}\frac{1}{s^k}R\left(\frac{|\bz_i-\by|^2}{128s^2t}\right)
\left|D^{2,3}u(\bz_i)\right|^2\mathd \mu_{\bz_i}\mathd\mu_\by\nonumber\\
&=&\frac{C}{\delta_0} \int_{\mathcal{O}_i}\frac{1}{s^k}R\left(\frac{|\bz_i-\by|^2}{128s^2t}\right)\mathd\mu_\by
\int_{B_{\bq_i}^{2\delta}}\left|D^{2,3}u(\bz_i)\right|^2\mathd \mu_{\bz_i}\nonumber\\
&\le & C \int_{B_{\bq_i}^{2\delta}}\left|D^{2,3}u(\bx)\right|^2\mathd \mu_{\bx}.\nonumber
\end{eqnarray}
This estimate would give us that
\begin{eqnarray}
\label{est:r1}
  \|r_1(\bx)\|_{L^2(\M)}\le Ct^{1/2}\|u\|_{H^3(\M)}
\end{eqnarray}
Now, we turn to estimate the gradient of $r_1$.
\begin{eqnarray*}
  \int_\M |\nabla_\bx r_1(\bx)|^2\mathd\mu_\bx&\le &  C\int_\M \left|\int_\M \nabla_\bx R_t(\bx,\by)d(\bx,\by) \mathd \mu_\by\right|^2\mathd\mu_\bx\\
&&+C\int_\M \left|\int_\M  R_t(\bx,\by)\nabla_\bx d(\bx,\by) \mathd \mu_\by\right|^2\mathd\mu_\bx.
\end{eqnarray*}
where $\nabla_\bx$ is the gradient in $\M$ with respect to $\bx$.

Using the same techniques in the calculation of $\|r_1(\bx)\|_{L^2(\M)}$, we get that
the first term of right hand side can bounded as follows
\begin{eqnarray}
  \int_\M \left|\int_\M \nabla_\bx R_t(\bx,\by)d(\bx,\by) \mathd \mu_\by\right|^2\mathd\mu_\bx\le C  \|u\|_{H^3(\M)}^2.\nonumber
\end{eqnarray}
The estimation of second term is a little involved. First, we have
\begin{eqnarray}
\int_\M \left|\int_\M  R_t(\bx,\by)\nabla_\bx d(\bx,\by) \mathd \mu_\by\right|^2\mathd\mu_\bx
&\le & C\int_\M \left(\int_\M R_t(\bx,\by)|\nabla_\bx d(\bx,\by)|^2\mathd \mu_\by\right)\mathd\mu_\bx\nonumber\\
&=& C\sum_{i=1}^N \int_{\mathcal{O}_i}\left(\int_{\M_{\by}^t}R_t(\bx,\by)|\nabla_\bx d(\bx,\by)|^2\mathd \mu_\bx\right)\mathd\mu_\by.\nonumber
\end{eqnarray}
Also using Newton-Leibniz formula, we have
\begin{eqnarray}
  d(\bx,\by)
&=&\xi^{i}\xi^{i'}\int_0^1\int_0^1s_1\left(\p_{i}\Phi^j(\al+ s_1\xi)\p_{i'}\Phi^{j'}(\al+s_2 s_1\xi)\nabla^{j'}\nabla^ju(\Phi(\al+s_2s_1 \xi))\right)
\mathd s_2\mathd s_1\nonumber\\
&&-\xi^{i}\xi^{i'}\int_0^1\int_0^1s_1\left(\p_{i}\Phi^j(\al)\p_{i'}\Phi^{j'}(\al)\nabla^{j'}\nabla^ju(\Phi(\al))\right)
\mathd s_2\mathd s_1\nonumber
\end{eqnarray}
Then the gradient of $d(\bx,\by)$ has following representation,
\begin{eqnarray}
\nabla_\bx  d(\bx,\by)
&=&\xi^{i}\xi^{i'}\nabla_\bx\left(\int_0^1\int_0^1s_1\left(\p_{i}\Phi^j(\al+ s_1\xi)\p_{i'}\Phi^{j'}(\al+s_2 s_1\xi)\nabla^{j'}\nabla^ju(\Phi(\al+s_2s_1 \xi))\right)
\mathd s_2\mathd s_1\right)\nonumber\\
&&\hspace{-20mm}+\nabla_\bx\left(\xi^{i}\xi^{i'}\right)\int_0^1\int_0^1\int_0^1s_1\frac{d}{d s_3}\left(\p_{i}\Phi^j(\al+ s_3s_1\xi)\p_{i'}\Phi^{j'}(\al+s_3s_2 s_1\xi)
\nabla^{j'}\nabla^ju(\Phi(\al+s_3s_2s_1 \xi))\right)
\mathd s_3\mathd s_2\mathd s_1\nonumber\\
&=&d_1(\bx,\by)+d_2(\bx,\by).\nonumber
\end{eqnarray}
For $d_1$, we have
\begin{eqnarray}
  \int_{\mathcal{O}_i}\left(\int_{\M_{\by}^t}R_t(\bx,\by)|d_1(\bx,\by)|^2\mathd \mu_\bx\right)\mathd\mu_\by
&\le& Ct^2\max_{0\le s\le 1} \int_{\mathcal{O}_i}\left(\int_{\M_{\by}^t}R_t(\bx,\by)|D^{2,3}u(\Phi_i(\al+s \xi))|^2\mathd \mu_\bx\right)\mathd\mu_\by,\nonumber
\end{eqnarray}
which means that
\begin{eqnarray}
\label{eqn:est-dr1-d1}
   \int_{\mathcal{O}_i}\left(\int_{\M_{\by}^t}R_t(\bx,\by)|d_1(\bx,\by)|^2\mathd \mu_\bx\right)\mathd\mu_\by\le C  \int_{B_{\bq_i}^{2\delta}}|D^{2,3}u(\bx)|^2\mathd \mu_\bx
\end{eqnarray}
For $d_2$, we have
\begin{eqnarray}
&&  d_2(\bx,\by)\nonumber\\
  &=&\nabla_\bx\left(\xi^{i}\xi^{i'}\right)\int_{[0,1]^3}s_1\frac{d}{d s_3}\left(\p_{i}\Phi^j(\al+ s_3s_1\xi)\p_{i'}\Phi^{j'}(\al+s_3s_2 s_1\xi)
\nabla^{j'}\nabla^ju(\Phi(\al+s_3s_2s_1 \xi))\right)
\mathd s_3\mathd s_2\mathd s_1\nonumber\\
&=&\nabla_\bx\left(\xi^{i}\xi^{i'}\right)\xi^{i''}\int_{[0,1]^3}s_1^2s_2\p_{i}\Phi^j(\al+s_3 s_1\xi)\p_{i''}\p_{i'}\Phi^{j'}(\al+s_3s_2 s_1\xi)\nabla^{j'}\nabla^ju(\Phi(\al+s_3s_2s_1 \xi))
\mathd s_3\mathd s_2\mathd s_1\nonumber\\
&&+\nabla_\bx\left(\xi^{i}\xi^{i'}\right)\xi^{i''}\int_{[0,1]^3}s_1^2\p_{i''}\p_{i}\Phi^j(\al+s_3 s_1\xi)\p_{i'}\Phi^{j'}(\al+s_3s_2 s_1\xi)\nabla^{j'}\nabla^ju(\Phi(\al+s_3s_2s_1 \xi))
\mathd s_3\mathd s_2\mathd s_1\nonumber\\
&&+\nabla_\bx\left(\xi^{i}\xi^{i'}\right)\xi^{i''}\int_{[0,1]^3}s_1^2s_2\p_{i}\Phi^j(\al+s_2 s_1\xi)\p_{i'}\Phi^{j'}(\al+s_3s_2 s_1\xi)\p_{i''}\Phi^{j''}(\al+s_3s_2 s_1\xi)\nonumber\\
&&\hspace{4cm}\nabla^{j''}\nabla^{j'}\nabla^ju(\Phi(\al+s_3s_2s_1 \xi))\mathd s_3\mathd s_2\mathd s_1\nonumber
\end{eqnarray}
This formula tells us that
\begin{eqnarray}
 \int_{\mathcal{O}_i}\left(\int_{\M_{\by}^t}R_t(\bx,\by)|d_2(\bx,\by)|^2\mathd \mu_\bx\right)\mathd\mu_\by
&\le& Ct^2\max_{0\le s\le 1} \int_{\mathcal{O}_i}\left(\int_{\M_{\by}^t}R_t(\bx,\by)|D^{2,3}u(\Phi(\al+s \xi))|^2\mathd \mu_\bx\right)\mathd\mu_\by.\nonumber
\end{eqnarray}
Using the same arguments as that in the calculation of $\|r_1\|_{L^2(\M)}$, we have
\begin{eqnarray}
\label{eqn:est-dr1-d2}
   \int_{\mathcal{O}_i}\left(\int_{\M_{\by}^t}R_t(\bx,\by)|d_2(\bx,\by)|^2\mathd \mu_\bx\right)\mathd\mu_\by\le C  \int_{B_{\bq_i}^{2\delta}}|D^3u(\bx)|^2\mathd \mu_\bx
\end{eqnarray}
Combining \eqref{eqn:est-dr1-d1} and \eqref{eqn:est-dr1-d2}, we have
\begin{eqnarray}
\label{est:dr1}
  \|\nabla r_1(\bx)\|_{L^2(\M)}\le C\|u\|_{H^3(\M)}
\end{eqnarray}

For $r_2$, first, notice that
\begin{eqnarray}
  \nabla^j\bar{R}_t(\bx,\by)&=&\frac{1}{2t}\p_{m'}\Phi^j(\al) g^{m'n'}\p_{n'}\Phi^i(\al) (x^i-y^i)\hk,\nonumber\\
\frac{\eta^j}{2t}R_t(\bx,\by)&=&\frac{1}{2t}\p_{m'}\Phi^j(\al) g^{m'n'}\p_{n'}\Phi^i(\al) \xi^{i'}\p_{i'}\Phi^i\hk.\nonumber
\end{eqnarray}
Then, we have
\begin{eqnarray}
  &&\nabla^j\bar{R}_t(\bx,\by)-\frac{\eta^j}{2t}R_t(\bx,\by)\nonumber\\
&=&\frac{1}{2t}\p_{m'}\Phi^i g^{m'n'}\p_{n'}\Phi^j \left(x^j-y^j-\xi^{i'}\p_{i'}\Phi^j\right)\hk\nonumber\\
&=&\frac{1}{2t}\xi^{i'}\xi^{j'}\p_{m'}\Phi^i g^{m'n'}\p_{n'}\Phi^j \left(\int_0^1\int_0^1s\p_{j'}\p_{i'}\Phi^j(\al+\tau s \xi)\mathd \tau\mathd s\right)\hk\nonumber
\end{eqnarray}
Thus, we get
\begin{eqnarray}
 \left|\nabla^j\bar{R}_t(\bx,\by)-\frac{\eta^j}{2t}R_t(\bx,\by)\right|&\le& \frac{C|\xi|^2}{t}\hk\nonumber\\
\left|\nabla_\bx\left(\nabla^j\bar{R}_t(\bx,\by)-\frac{\eta^j}{2t}R_t(\bx,\by)\right)\right|&\le& \frac{C|\xi|}{t}\hk+\frac{C|\xi|^3}{t^2}|R'_t(\bx,\by)|\nonumber
\end{eqnarray}
Then, we have following bound for $r_2$,
\begin{align}
\label{est:r2}
 & \int_{\M}|r_2(\bx)|^2\mathd \mu_\bx\\
\le& Ct\int_\M  \left(\int_\M \hk |D^2u(\by)| \mathd \mu_\by\right)^2\mathd\mu_\bx\nonumber\\
\le &  Ct\int_\M  \left(\int_\M \hk   \mathd\mu_\by\right)\int_\M \hk |D^2u(\by)|^2 \mathd \mu_\by\mathd\mu_\bx \nonumber\\
\le& Ct  \max_{\by}\left(\int_\M \hk\mathd \mu_\bx\right) \int_\M|D^2u(\by)|^2 \mathd \mu_\by \nonumber\\
\le& Ct\|u\|_{H^2(\M)}^2.\nonumber
\end{align}
Similarly, we have
\begin{align}
\label{est:dr2}
 &\int_M |\nabla r_2(\bx)|^2\mathd\mu_\bx\\
\le &  Ct\int_\M  \left(\int_\M\nabla_\bx \hk  \mathd\mu_\by\right)\int_\M \nabla_\bx\hk
|D^2u(\by)|^2 \mathd \mu_\by\mathd\mu_\bx\nonumber\\
 \le& C\sqrt{t} 
\max_{\by}\left(\int_\M \nabla_\bx\hk\mathd \mu_\bx\right) \int_\M|D^2u(\by)|^2 \mathd \mu_\by\nonumber\\
\le&  C\|u\|_{H^2(\M)}^2. \nonumber
\end{align}
$r_3$ is relatively easy to estimate by using the well known Gauss formula.
\begin{align*}
  r_3(\bx)&=\int_{\p\mathcal{M}}n^{j}\eta^{i}(\nabla^{i} \nabla^{j}u(\by)) \rhk   \mathd\tau_\by
-\int_{\mathcal{M}}\eta^{i}(\nabla^{i} \nabla^{j}u(\by)) \rhk \nabla^{j} \mathd\mu_\by\\
&=\tilde{I}_{bd}-\int_{\mathcal{M}}\eta^{i}(\nabla^{i} \nabla^{j}u(\by)) \rhk \nabla^{j} \mathd\mu_\by
\end{align*}
where $\tilde{I}_{bd}=\int_{\p\mathcal{M}}n^{j}\eta^{i}(\nabla^{i} \nabla^{j}u(\by)) \rhk   \mathd\tau_\by$.

Using the assumption that $p\in C^1(\M)$, it is easy to get that 
\begin{align}
  \label{est:r3}
  \|r_3-\tilde{I}_{bd}\|_{L^2(\M)}&\le C\sqrt{t}\|u\|_{H^2(\M)},\\
\label{est:dr3}
\|\nabla(r_3-\tilde{I}_{bd})\|_{L^2(\M)}&\le C\|u\|_{H^2(\M)}.
\end{align}

Now, we turn to bound the last term $r_4$. Notice that
\begin{eqnarray}
\label{eqn:grad2Delta} 
 \nabla^j\left(\nabla^{j}u(\by)\right)&=&(\p_{k'} \Phi^j)g^{k'l'}\p_{l'}\left((\p_{m'}\Phi^j)g^{m'n'}(\p_{n'} u )\right)\nonumber\\
&=&(\p_{k'}\Phi^j)g^{k'l'}\left(\p_{l'}(\p_{m'}\Phi^j)\right)g^{m'n'}(\p_{n'}u)\nonumber\\
&&+(\p_{k'}\Phi^j)g^{k'l'}(\p_{m'}\Phi^j)\p_{l'}\left(g^{m'n'}(\p_{n'}u)\right)\nonumber\\
&=&\frac{1}{\sqrt{\det G}}(\p_{m'}\sqrt{\det G}) g^{m'n'}(\p_{n'}u)+\p_{m'}\left(g^{m'n'}(\p_{n'}u)\right)\nonumber\\
&=&\frac{1}{\sqrt{\det G}}\p_{m'}\left(\sqrt{\det G} g^{m'n'}(\p_{n'}u)\right) =\Delta_\M u(\by).
\end{eqnarray}
where $\det G$ is the determinant of $G$ and $G=(g_{ij})_{i,j=1,\cdots,k}$.
Here we use the fact that
\begin{eqnarray}
  (\p_{k'}\Phi^j)g^{k'l'}\left(\p_{l'}(\p_{m'}\Phi^j)\right)&=&(\p_{k'}\Phi^j)g^{k'l'}\left(\p_{m'}(\p_{l'}\Phi^j)\right)\nonumber\\
&=&(\p_{m'}(\p_{k'}\Phi^j))g^{k'l'}(\p_{l'}\Phi^j)\nonumber\\
&=&\frac{1}{2}g^{k'l'}\p_{m'}(g_{k'l'})\nonumber \\
&=&\frac{1}{\sqrt{\det G}}(\p_{m'}\sqrt{\det G}).\nonumber
\end{eqnarray}
Moreover, we have
\begin{eqnarray}
\label{eqn:div}
&& g^{i'j'}(\p_{j'}\Phi^{j})(\p_{i'}\xi^{l})(\p_{l}\Phi^{i})(\nabla^{i}\nabla^{j}u(\by))\\
&=&- g^{i'j'}(\p_{j'}\Phi^{j})(\p_{i'}\Phi^{i})(\nabla^{i}\nabla^{j}u(\by))\nonumber\\
&=&- g^{i'j'}(\p_{j'}\Phi^{j})(\p_{i'}\Phi^{i})(\p_{m'}\Phi^{i})g^{m'n'}\p_{n'}\left(\nabla^{j}u(\by) \right)\nonumber\\
&=&- g^{i'j'}(\p_{j'}\Phi^{j})\p_{i'}\left(\nabla^{j}u(\by)\right)\nonumber\\
&=&- \nabla^{j}\left(\nabla^{j}u(\by)\right).\nonumber
\end{eqnarray}
where the first equalities are due to that
$\p_{i'}\xi^l = -\delta_{i'}^l$.
Then we have
\begin{eqnarray}
  &&\mbox{div} \; \left(\eta^i(\nabla^i\nabla^j u(\by))\right)+\Delta_\M u(\by)\nonumber\\
&=&\frac{1}{\sqrt{\det G}}\,\p_{i'}\left(\sqrt{\det G}\,g^{i'j'}(\p_{j'}\Phi^j) \xi^{l}(\p_l \Phi^i)(\nabla^i\nabla^j u(\by))\right)
-g^{i'j'}(\p_{j'}\Phi^{j})(\p_{i'}\xi^{l})(\p_{l}\Phi^{i})(\nabla^{i}\nabla^{j}u(\by))\nonumber\\
&=&\frac{\xi^l}{\sqrt{\det G}}\,\p_{i'}\left(\sqrt{\det G}\,g^{i'j'}(\p_{j'}\Phi^j) (\p_l \Phi^i)(\nabla^i\nabla^j u(\by))\right).\nonumber
\end{eqnarray}
Here we use the equalities \eqref{eqn:grad2Delta}, \eqref{eqn:div}, $\eta^i = \xi^{l} \p_{i'}\Phi^l$ and the definition of $\mbox{div}$,
\begin{equation}
\mbox{div}X = \frac{1}{\sqrt{\det G}}\p_{i'} (\sqrt{\det G}\,g^{i'j'}\p_{j'}\Phi^k X^k).
\label{eqn:divergence}
\end{equation}
where $X$ is a smooth tangent vector
field on $\M$ and $(X^1,\dots,X^d)^t$ is its representation in embedding coordinates.

Hence,
\begin{eqnarray}
  r_4(\bx)=\int_\mathcal{M}\frac{\xi^l}{\sqrt{\det G}}\,\p_{i'}\left(\sqrt{\det G}\,g^{i'j'}
(\p_{j'}\Phi^j) (\p_l \Phi^i)(\nabla^i\nabla^j u(\by))\right) \rhk  \mathd \mu_\by\nonumber
\end{eqnarray}
Then it is easy to get that
\begin{eqnarray}
  \label{est:r4}
  \|r_4(\bx)\|_{L^2(\M)}&\le& C t^{1/2}\|u\|_{H^3(\M)},\\
\|\nabla r_4(\bx)\|_{L^2(\M)}&\le & C\|u\|_{H^3(\M)}.
\label{est:dr4}
\end{eqnarray}
By combining \eqref{est:r1},\eqref{est:dr1},\eqref{est:r2},\eqref{est:dr2},\eqref{est:r3},\eqref{est:dr3},\eqref{est:r4},\eqref{est:dr4}, we know that
\begin{eqnarray}
  \label{est:r0}
  \|r-\tilde{I}_{bd}\|_{L^2(\M)}&\le& C t^{1/2}\|u\|_{H^3(\M)},\\
\|\nabla (r-\tilde{I}_{bd})\|_{L^2(\M)}&\le & C\|u\|_{H^3(\M)}.
\label{est:dr0}
\end{eqnarray}
Using the definition of $I_{bd}$ and $\tilde{I}_{bd}$, we obtain
\begin{equation*}
  I_{bd}-\tilde{I}_{bd}=\int_{\p\mathcal{M}}n^{j}(\by)(\bx-\by-\eta(\bx,\by))\cdot(\nabla \nabla^{j}u(\by)) \rhk   \mathd\tau_\by
\end{equation*}
Using the definition of $\eta(\bx,\by)$, it is easy to check that
\begin{equation*}
  |\bx-\by-\eta(\bx,\by)|=O(|\bx-\by|^2),\quad |\nabla_{\bx}(\bx-\by-\eta(\bx,\by))|=O(|\bx-\by|)
\end{equation*}
which implies that 
\begin{eqnarray}
  \label{est:bd}
  \|I_{bd}-\tilde{I}_{bd}\|_{L^2(\M)}&\le& C t^{3/4}\|u\|_{H^2(\M)},\\
\|\nabla (I_{bd}-\tilde{I}_{bd})\|_{L^2(\M)}&\le & Ct^{1/4}\|u\|_{H^3(\M)}.
\label{est:dbd}
\end{eqnarray}
The theorem is proved by putting \eqref{est:r0}, \eqref{est:dr0}, \eqref{est:bd}, \eqref{est:dbd} together.
\end{proof}


  



\section{Error analysis of the discretization (Theorem \ref{thm:dis_error})}
In this section, we estimate the discretization error introduced by approximating the integrals in~\eqref{eq:integral-homo}
that is to prove Theorem \ref{thm:dis_error}.
To simplify the notation, we introduce a intermediate operator defined as follows,

\begin{eqnarray}
 L_{t, h}u(\bx) &=& \invt\sum_{\bfp_j \in P} \hkxpj(u(\bx) - u(\bfp_j))V_j.
 \label{eqn:laplace_dis} 
\end{eqnarray}
If  $u_{t,h}=I_{\mathbf{f}}(\mathbf{u})$ with $\mathbf{u}$
satisfying Equation \eqref{eqn:dis-homo}, 
one can verify that the following equation is satisfied,
\begin{eqnarray}
L_{t, h}u_{t,h}(\bx) &=& \sum_{\bfp_j \in P} \rhkxpj f(\bfp_j)V_j.
\label{eqn:integral_dis_interp}
\end{eqnarray}

We introduce a discrete operator $\mathcal{L}: \mathbb{R}^n\rightarrow \mathbb{R}^n$ where $n=|P|$. For 
any $\bfu=(u_1,\cdots,u_n)^t$, denote
\begin{eqnarray}
\left(\mathcal{L}\mathbf{u}\right)_i &=& \invt\sum_{\bfp_j \in P} R_t(\bfp_i,\bfp_j)(u_i - u_j)V_j.
\label{eqn:laplace_Lh} 
\end{eqnarray}
For this operator, we have the following important theorem. 

\begin{theorem}
Under the assumptions in Assumption \eqref{assumptions}, there exist constants $C>0,\, C_0>0$ independent on $t$ so that for 
any ${\bf u} = (u_1, \cdots, u_n)^t \in \R^d$ with $\sum_{i=1}^n u_iV_i = 0$ and for any sufficient small $t$ 
and $\frac{h(P,\bV,\M)}{\sqrt{t}}$ 
\begin{equation}
\left<{\bf u}, \mathcal{L} {\bf u} \right>_{\bV} \geq C(1-\frac{C_0h(P,\bV,\M)}{\sqrt{t}}) \left<{\bf u}, {\bf u}\right>_{\bV}
\end{equation}
where $\left<{\bf u}, {\bf v}\right>_{\bV} = \sum_{i=1}^n u_iv_iV_i$ for any ${\bf u} = (u_1, \cdots, u_n), 
{\bf v} =(v_1, \cdots, v_n)$. 
\label{thm:elliptic_L}
\end{theorem}
The proof of the above theorem is deferred to Appendix D.

It has an easy corollary which gives a priori estimate of $\mathbf{u}=(u_1, \cdots, u_n)^t$ solving the 
discrete problem~\eqref{eqn:dis-homo}.
\begin{lemma}
Suppose $\mathbf{u}=(u_1, \cdots, u_n)^t$ with $\sum_i u_i V_i = 0$ solves the problem~\eqref{eqn:dis-homo}
and $\bff =(f(\bfp_1), \cdots, f(\bfp_n))^t$ for $f\in C(\mathcal{M})$,
 there exists a constant $C>0$ such that 
\begin{eqnarray}
  \left(\sum_{i=1}^nu_i^2V_i\right)^{1/2}\le C\|f\|_\infty,
\end{eqnarray}
provided $t$ and $\frac{h(P,\bV,\M)}{\sqrt{t}}$ are small enough.
\label{lem:bound_solution_bfu}
\end{lemma}
\begin{proof}
From Theorem \ref{thm:elliptic_L}, we have
\begin{eqnarray*}
\sum_{i=1}^{n} u_i^2 V_i &\leq&  \sum_{i=1}^{n} \left( \sum_{\bfp_j \in P} \rhkpipj f_jV_j \right) u_iV_i \nonumber \\
&\leq& \left(\sum_{i=1}^{n}  u_i^2V_i\right)^{1/2} \left(\sum_{i=1}^{n} 
\left( \|f\|_\infty \sum_{p_j \in P} \rhkpipj V_j \right)^2    V_i\right)^{1/2}\nonumber \\
&\leq& C\left(\sum_{i=1}^{n}  u_i^2V_i\right)^{1/2} \|f\|_\infty.  
\end{eqnarray*}
This proves the lemma. 
\end{proof}

We are now ready to prove Theorem \ref{thm:dis_error}.
\begin{proof} {\it of Theorem \ref{thm:dis_error}} \\ 
Denote
\begin{eqnarray}
u_{t,h}(\bx) = \frac{1}{w_{t,h}(\bx)}\left(\sum_{\bfp_j\in P}R_t(\bx,\bfp_j)u_jV_j
-t\sum_{\bfp_j\in P}\bar{R}_t(\bx,\bfp_j)f_jV_j\right)
\end{eqnarray}
where $\mathbf{u}=(u_1, \cdots, u_n)^t$ with $\sum_{i=1}^n u_i V_i = 0$ solves the problem~\eqref{eqn:dis-homo}, 
$ f_j = f(\bfp_j)$ and $w_{t,h}(\bx)=\sum_{\bfp_j\in P}R_t(\bx,\bfp_j)V_j$.  
For convenience, we set
\begin{eqnarray*}
  a_{t,h}(\bx)&=&\frac{1}{w_{t,h}(\bx)}\sum_{\bfp_j\in P}R_t(\bx,\bfp_j)u_jV_j, \\
c_{t,h}(\bx)&=&-\frac{t}{w_{t,h}(\bx)}\sum_{\bfp_j\in P}\bar{R}_t(\bx,\bfp_j)f(\bfp_j)V_j, 
\end{eqnarray*}
and thus $u_{t,h} = a_{t, h} +c_{t, h}$.

In the proof, to simplify the notation, we denote $h=h(P,\bV,\M)$ and $n=|P|$.

First we upper bound $\|L_t(u_{t,h}) - L_{t, h}(u_{t, h})\|_{L^2(\M)}$. 
For $c_{t, h}$, we have 
\begin{eqnarray}
&&\left|\left(L_tc_{t, h} - L_{t, h}c_{t, h}\right)(\bx)\right|\nonumber \\
&=& \frac{1}{t} \left|\int_{\mathcal{M}}R_t(\bx,\by)(c_{t,h}(\bx)-c_{t,h}(\by))  \mathd \mu_\by-\sum_{\bfp_j\in P}R_t(\bx,\bfp_j)(c_{t,h}(\bx)-c_{t,h}(\bfp_j))V_j\right|\nonumber\\
&\le &\frac{1}{t} \left|c_{t,h}(\bx)\right|\left|\int_{\mathcal{M}}R_t(\bx,\by) \mathd \mu_\by-\sum_{\bfp_j\in P}R_t(\bx,\bfp_j)V_j\right|\nonumber\\
&&+ \frac{1}{t}\left|\int_{\mathcal{M}}R_t(\bx,\by)c_{t,h}(\by)  \mathd \mu_\by-\sum_{\bfp_j\in P}R_t(\bx,\bfp_j)c_{t,h}(\bfp_j)V_j\right|\nonumber\\
&\le &\frac{Ch}{t^{3/2}}\left|c_{t,h}(\bx)\right|+ \frac{Ch}{t^{3/2}} \|c_{t,h}\|_{C^1(\M)}\nonumber\\
&\le &\frac{Ch}{t^{3/2}}t\|f\|_\infty+ \frac{Ch}{t^{3/2}}(t\|f\|_\infty + t^{1/2}\|f\|_{\infty})\le\frac{Ch}{t}\|f\|_{\infty}.\nonumber
\end{eqnarray}
For $a_{t, h}$, we have 
\begin{eqnarray}
\label{eqn:a_1}
&&  \int_{\mathcal{M}}\left(a_{t,h}(\bx)\right)^2\left|\int_{\mathcal{M}}R_t(\bx,\by) \mathd \mu_\by-\sum_{\bfp_j\in P}R_t(\bx,\bfp_j)V_j\right|^2\mathd\mu_\bx\\
&\le & \frac{Ch^2}{t}\int_{\mathcal{M}}\left(a_{t,h}(\bx)\right)^2\mathd\mu_\bx 
\le  \frac{Ch^2}{t} \int_{\mathcal{M}} \left( \frac{1}{w_{t, h}(\bx)} \sum_{\bfp_j\in P}R_t(\bx,\bfp_j)u_jV_j \right)^2 \mathd\mu_\bx \nonumber \\
&\le & \frac{Ch^2}{t} \int_{\mathcal{M}} \left( \sum_{\bfp_j\in P}R_t(\bx,\bfp_j)u_j^2V_j  \right) \left( \sum_{\bfp_j\in P}R_t(\bx,\bfp_j)V_j  \right) \mathd\mu_\bx \nonumber \\
&\le & \frac{Ch^2}{t} \left( \sum_{j=1}^{n}u_j^2V_j \int_{\mathcal{M}}R_t(\bx,\bfp_j)  \mathd\mu_\bx    \right) \le \frac{Ch^2}{t}\sum_{j=1}^{n}u_j^2V_j. \nonumber
\end{eqnarray}
Let 
\begin{eqnarray}
A &=&  C_t\int_{\mathcal{M}}\frac{1}{w_{t, h}(\by)}R\left(\frac{|\bx-\by|^2}{4t}\right)R\left(\frac{|\bfp_i-\by|^2}{4t}\right) \mathd \mu_\by\nonumber\\
 &-&C_t\sum_{\bfp_j\in P} \frac{1}{w_{t, h}(\bfp_j)}R\left(\frac{|\bx-\bfp_j|^2}{4t}\right)R\left(\frac{|\bfp_i-\bfp_j|^2}{4t}\right)V_j. \nonumber
\end{eqnarray}
We have $|A|<\frac{Ch}{t^{1/2}}$ for some constant $C$ independent of $t$. In addition, notice that
only when $|\bx-\bfp_i|^2\leq 16t $ is $A\neq 0$, which implies 
\begin{eqnarray}
|A| \leq \frac{1}{\delta_0}|A|R\left(\frac{|\bx-\bfp_i|^2}{32t}\right). \nonumber
\end{eqnarray}
Then we have
\begin{eqnarray}
\label{eqn:a_2}
&&\int_{\mathcal{M}}\left|\int_{\mathcal{M}}R_t(\bx,\by)a_{t,h}(\by)  \mathd \mu_\by-\sum_{\bfp_j\in P}R_t(\bx,\bfp_j)a_{t,h}(\bfp_j)V_j\right|^2\mathd\mu_\bx\\
&=& \int_{\mathcal{M}}\left(\sum_{i=1}^{n}C_tu_iV_i A \right)^2\mathd\mu_\bx\le
\frac{Ch^2}{t} \int_{\mathcal{M}}\left(\sum_{i=1}^{n} C_t|u_i|V_i R\left(\frac{|\bx-\bfp_i|^2}{32t}\right)  \right)^2 \mathd\mu_\bx \nonumber\\
&\le &\frac{Ch^2}{t} \int_{\mathcal{M}} \left(\sum_{i=1}^{n} C_t R\left(\frac{|\bx-\bfp_i|^2}{32t}\right) u^2_iV_i\right)  \left(\sum_{\bfp_i\in P} C_t R\left(\frac{|\bx-\bfp_i|^2}{32t} \right)V_i  \right) \mathd\mu_\bx \nonumber\\
&\le &\frac{Ch^2}{t} \sum_{i=1}^{n} \left(\int_{\mathcal{M}} C_t R\left(\frac{|\bx-\bfp_i|^2}{32t}\right)  \mathd\mu_\bx \left(u^2_iV_i\right)\right)  \le
 \frac{Ch^2}{t} \left(\sum_{i=1}^{n}u_i^2V_i\right). \nonumber
\end{eqnarray} 
Combining Equation~\eqref{eqn:a_1},~\eqref{eqn:a_2} and Lemma~\ref{lem:bound_solution_bfu}, 
\begin{eqnarray}
&&\|L_ta_{t, h} - L_{t, h}a_{t, h}\|_{L^2(\M)} \nonumber \\
&=&\left(\int_M \left|\left(L_t(a_{t, h}) - L_{t, h}(a_{t, h})\right)(\bx)\right|^2 \mathd\mu_\bx\right)^{1/2} \nonumber\\
&\le & \frac{1}{t}\left(\int_{\mathcal{M}}\left(a_{t,h}(\bx)\right)^2\left|\int_{\mathcal{M}}R_t(\bx,\by) \mathd \mu_\by-\sum_{\bfp_j\in P}R_t(\bx,\bfp_j)V_j\right|^2\mathd\mu_\bx\right)^{1/2}\nonumber\\
&&+  \frac{1}{t}\left(\int_{\mathcal{M}}\left|\int_{\mathcal{M}}R_t(\bx,\by)a_{t,h}(\by)  \mathd \mu_\by-\sum_{\bfp_j\in P}R_t(\bx,\bfp_j)a_{t,h}(\bfp_j)V_j\right|^2\mathd\mu_\bx
\right)^{1/2} \nonumber \\
&\le & \frac{Ch}{t^{3/2}} \left(\sum_{i=1}^{n}u_i^2V_i\right)^{1/2} \leq  \frac{Ch}{t^{3/2}}\|f\|_\infty.\nonumber
\end{eqnarray}
Assembling the parts together, we have the following upper bound.
\begin{eqnarray}
\label{eqn:elliptic_L_1}
&&\|L_tu_{t, h} - L_{t, h}u_{t, h}\|_{L^2(\M)} \\
&\le& \|L_ta_{t, h} - L_{t, h}a_{t, h}\|_{L^2(\M)}  + \|L_tc_{t, h} - L_{t, h}c_{t, h}\|_{L^2(\M)} \nonumber \\
&\le& \frac{Ch}{t^{3/2}}\|f\|_\infty + \frac{Ch}{t}\|f\|_{\infty} \le \frac{Ch}{t^{3/2}} \|f\|_\infty\nonumber
\end{eqnarray}
At the same time, since $u_t$ respectively $u_{t,h}$ solves equation~\eqref{eq:integral-homo} respectively equation~\eqref{eqn:integral_dis_interp}, 
we have 
\begin{eqnarray}
\label{eqn:elliptic_L_2}
&&\|L_t(u_{t}) - L_{t, h}(u_{t, h})\|_{L^2(\M)}  \\ 
&=&\left(\int_\M \left(\left(L_tu_{t} - L_{t, h}u_{t, h}\right)(\bx)\right)^2     \mathd\mu_\bx\right)^{1/2}  \nonumber \\
&=& \left( \int _\M \left( \int_{\mathcal{M}}\bar{R}_t(\bx,\by)f(\by) - \sum_{\bfp_j\in P}\bar{R}_t(\bx,\bfp_j)f(\bfp_j)V_j\right)^2 \mathd\mu_\bx\right)^{1/2}\nonumber\\ 
&\le&  \frac{Ch}{t^{1/2}}\|f\|_{C^1(\M)}. \nonumber
\end{eqnarray}
The complete $L^2$ estimate follows from Equation~\eqref{eqn:elliptic_L_1} and~\eqref{eqn:elliptic_L_2}. 
 
The estimate of the gradient, $\|\nabla(L_t(u_{t}) - L_{t, h}(u_{t, h}))\|_{L^2(\M)}$, can be obtained similarly. The details can be found in Appendix E.
\end{proof}


\section{Stability analysis (Theorem \ref{thm:regularity} and \ref{thm:regularity_boundary})}
\label{sec:stability}
To prove Theorem \ref{thm:regularity} and \ref{thm:regularity_boundary}, we need following two theorems regarding the coercivity of the operator $L_t$.
\begin{theorem} For any function $u\in L^2(\mathcal{M})$, 
there exists a constant $C>0$ independent on $t$ and $u$, such that
  \begin{eqnarray}
    \left<u,L_t u\right>_\M \ge C\int_\mathcal{M} |\nabla v|^2
\mathd \mu_\bx
  \end{eqnarray}
where $\left<f, g\right>_{\mathcal{M}} = \int_{\mathcal{M}} f(\bx)g(\bx)\mathd\mu_\bx$ for any $f, g\in L_2(\mathcal{M})$, and
\begin{eqnarray}
v(\bx)=\frac{C_t}{w_t(\bx)}\int_{\mathcal{M}}R\left(\frac{|\bx-\by|^2}{4t}\right) u(\by)\mathd \mu_\by, 
\label{eqn:smooth_v}
\end{eqnarray}
and $w_t(\bx) = C_t\int_{\mathcal{M}}R\left(\frac{|\bx-\by|^2}{4t}\right)\mathd \mu_\by$.
\label{thm:elliptic_v}
\end{theorem}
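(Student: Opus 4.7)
The strategy is to upper-bound $\int_\M |\nabla v|^2\,\mathd\mu_\bx$ by the symmetric nonlocal bilinear form
$\left<u,L_tu\right>_\M=\frac{1}{2t}\int_\M\int_\M R_t(\bx,\by)(u(\bx)-u(\by))^2\,\mathd\mu_\bx\mathd\mu_\by$,
obtained via the $\bx\leftrightarrow\by$ symmetry of $R_t$. I extend $v$ to $\R^d$ by the same defining formula so that the ambient gradient makes sense, and since $|\nabla_\M v|\le|\nabla_{\R^d}v|$ it suffices to bound the ambient gradient. Applying the quotient rule to $v=h/w_t$ (with $h(\bx)=C_t\int_\M R(|\bx-\by|^2/4t)u(\by)\,\mathd\mu_\by$) and substituting the elementary identity
$u(\by)-v(\bx)=\frac{C_t}{w_t(\bx)}\int_\M R(|\bx-\bz|^2/4t)(u(\by)-u(\bz))\,\mathd\mu_\bz$
produces the double-integral representation
\[
\nabla v(\bx)=\frac{C_t^2}{w_t(\bx)^2}\int_\M\int_\M R'\!\Bigl(\frac{|\bx-\by|^2}{4t}\Bigr)\frac{\bx-\by}{2t}R\!\Bigl(\frac{|\bx-\bz|^2}{4t}\Bigr)(u(\by)-u(\bz))\,\mathd\mu_\by\mathd\mu_\bz,
\]
which is exactly the structure needed to match $\left<u,L_tu\right>_\M$ in terms of differences of $u$.

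I then apply a weighted Cauchy--Schwarz in $(\by,\bz)$ with weight $\omega(\bx,\by,\bz)=\tilde R(|\bx-\by|^2/4t)\,R(|\bx-\bz|^2/4t)$, where $\tilde R$ is a fixed smooth compactly supported nonnegative function chosen so that $[R'(r)]^2\,r\le C_1\tilde R(r)$ for all $r\ge0$ (feasible since the $C^2$ compact-support assumption on $R$ makes $R'$ bounded; e.g., a mollified version of $\mathbf{1}_{[0,1]}$ works). Combined with the standard manifold volume estimate $|\{\by\in\M:|\bx-\by|^2\le4t\}|\lesssim t^{k/2}$ and the uniform lower bound $w_t(\bx)\ge c>0$ (both valid for $t\le T_0$ small, on compact $\M$, using assumption (c)), this gives the pointwise bound
$|\nabla v(\bx)|^2\le C_2t^{-k-1}\int_\M\int_\M\tilde R(|\bx-\by|^2/4t)R(|\bx-\bz|^2/4t)(u(\by)-u(\bz))^2\,\mathd\mu_\by\mathd\mu_\bz$.
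Integrating in $\bx$ and swapping the order of integration, the inner integral $\int_\M\tilde R(|\bx-\by|^2/4t)R(|\bx-\bz|^2/4t)\,\mathd\mu_\bx$ is supported on pairs with $|\by-\bz|^2\le16t$ and is bounded by $C_3t^{k/2}$. Assumption (c) then yields $\mathbf{1}_{|\by-\bz|^2\le16t}\le\delta_0^{-1}R(|\by-\bz|^2/(32t))$, leading to
\[
\int_\M|\nabla v|^2\,\mathd\mu_\bx\le\frac{C_4}{t^{k/2+1}}\int_\M\int_\M R\!\Bigl(\frac{|\by-\bz|^2}{32t}\Bigr)(u(\by)-u(\bz))^2\,\mathd\mu_\by\mathd\mu_\bz.
\]

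Since $\left<u,L_tu\right>_\M$ is, up to a dimensional constant, exactly $t^{-k/2-1}\int\int R(|\by-\bz|^2/(4t))(u(\by)-u(\bz))^2$, the proof reduces to the \emph{kernel comparison inequality}
\[
\int_\M\int_\M R\!\Bigl(\frac{|\by-\bz|^2}{32t}\Bigr)(u(\by)-u(\bz))^2\,\mathd\mu_\by\mathd\mu_\bz\le C_5\int_\M\int_\M R\!\Bigl(\frac{|\by-\bz|^2}{4t}\Bigr)(u(\by)-u(\bz))^2\,\mathd\mu_\by\mathd\mu_\bz,
\]
which is the main technical obstacle: the ``fat'' kernel on the left is nonzero for pairs at distance up to $\sqrt{32t}$, whereas the ``thin'' kernel on the right is supported only on distance up to $2\sqrt{t}$. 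I would prove it by a chaining argument: for each pair $(\by,\bz)$ with $|\by-\bz|^2\le32t$, choose a uniformly bounded number $N$ of intermediate points $\by=\mathbf{w}_0,\mathbf{w}_1,\ldots,\mathbf{w}_N=\bz\in\M$ with $|\mathbf{w}_i-\mathbf{w}_{i+1}|^2\le t$, and then average the telescoping inequality $(u(\by)-u(\bz))^2\le N\sum_{i=0}^{N-1}(u(\mathbf{w}_i)-u(\mathbf{w}_{i+1}))^2$ against a joint probability density proportional to $\prod_{i=0}^{N-1}R_t(\mathbf{w}_i,\mathbf{w}_{i+1})$, whose total mass over admissible configurations is bounded below uniformly in $t$ by assumption (c). This reduces the fat-kernel form to a uniformly bounded multiple of the thin-kernel form and hence to $\left<u,L_tu\right>_\M$. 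The delicate point is keeping the chain inside $\M$ near $\p\M$ and obtaining scale-correct volume estimates for the averaging integrals in local charts; both are handled for $t\le T_0$ sufficiently small, where $\M$ is essentially Euclidean at scale $\sqrt{t}$. Chaining together the three steps yields $\int_\M|\nabla v|^2\,\mathd\mu_\bx\le C^{-1}\left<u,L_tu\right>_\M$ with a $t$-independent constant.
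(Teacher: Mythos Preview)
Your proposal is correct and follows essentially the same route as the paper: write $\nabla v$ as a double integral against differences $u(\by)-u(\bz)$, apply Cauchy--Schwarz to land on the ``fat'' kernel $R(|\by-\bz|^2/32t)$, and then reduce the fat kernel to the thin one by a chaining argument. The paper packages the first two steps as a lemma bounding $\int|\nabla v|^2$ by the fat-kernel Dirichlet form (using the symmetrized kernel $K^i(\bx,\by,\by';t)$ rather than your unsymmetrized version, and the support trick $K^i\le\delta_0^{-1}K^i\,R(|\by-\by'|^2/32t)R(|\bx-\tfrac{\by+\by'}{2}|^2/8t)$ in place of your auxiliary $\tilde R$), and the kernel comparison as a separate lemma; for the latter it builds the chain \emph{deterministically} by linear interpolation in a convex local chart with $N=16$ nodes, which is exactly the mechanism that handles your ``delicate point'' of keeping the chain inside $\M$ near $\partial\M$, whereas your sketch averages over random chains weighted by $\prod R_t(\mathbf{w}_i,\mathbf{w}_{i+1})$---a valid variant, but the paper's chart-based construction makes the boundary issue and the change of variables more transparent.
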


\begin{theorem}
Assume both $\M$ and $\p\M$ are $C^\infty$. There exists a constant $C>0$ independent on $t$
so that for any function $u\in L_2(\M)$ with $\int_\M u(\bx)\mathd \mu_\bx = 0$ and for any sufficient small $t$
\begin{eqnarray}
\left<u, L_tu \right>_{\mathcal{M}} \geq C\|u\|_{L_2(\M)}^2
\end{eqnarray}
\label{thm:elliptic_L_t}
\end{theorem}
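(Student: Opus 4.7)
The plan is to upgrade the $H^1$-type coercivity of Theorem~\ref{thm:elliptic_v} to $L^2$ coercivity by combining three ingredients: (i) $u$ is close to its smoothed version $v$ in $L^2$; (ii) $v$ is close to its own mean via a Poincar\'e inequality on $\M$; and (iii) the mean $\bar v$ is small because $\int_\M u=0$. The starting point is the elementary identity
\begin{equation*}
L_t u(\bx)=\frac{w_t(\bx)}{t}\bigl(u(\bx)-v(\bx)\bigr),
\end{equation*}
which follows directly from the definitions of $L_t$, $v$, and $w_t$. First I would write $u(\bx)-v(\bx)=w_t(\bx)^{-1}\int_\M R_t(\bx,\by)(u(\bx)-u(\by))\mathd\mu_\by$, apply Cauchy--Schwarz under the integral, integrate in $\bx$, and use the symmetric-form identity $\langle u,L_tu\rangle_\M=(2t)^{-1}\int_\M\int_\M R_t(\bx,\by)(u(\bx)-u(\by))^2\mathd\mu_\by\mathd\mu_\bx$ to obtain
\begin{equation*}
\|u-v\|_{L^2(\M)}^2\le \frac{2t}{\inf_{\bx\in\M} w_t(\bx)}\,\langle u,L_tu\rangle_\M.
\end{equation*}
For this I need a uniform lower bound $\inf_\M w_t\ge c_0>0$ for small $t$, which is a standard consequence of the compactness of $\M$ together with the fact that a fixed fraction of the kernel ball around any $\bx\in\M$ lies inside $\M$ when the boundary is smooth.

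Next, letting $\bar v=|\M|^{-1}\int_\M v\mathd\mu_\bx$, Fubini gives $\int_\M v=\int_\M u(\by)\phi(\by)\mathd\mu_\by$ with $\phi(\by)=\int_\M R_t(\bx,\by)/w_t(\bx)\mathd\mu_\bx$. Since $\int_\M u=0$ I can replace $\phi$ by $\phi-1$, so $|\bar v|\le|\M|^{-1}\|u\|_{L^2}\|\phi-1\|_{L^2}$. A boundary-layer analysis should yield $\|\phi-1\|_{L^2(\M)}\le Ct^{1/4}$: in the interior region $\{\by:\mathrm{dist}(\by,\p\M)>c\sqrt{t}\}$ the quantity $\phi-1$ is smooth and of order $O(t)$, while in the boundary layer, which has volume $O(\sqrt{t})$, the crude pointwise bound $|\phi-1|=O(1)$ suffices. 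Combined with the Poincar\'e inequality $\|v-\bar v\|_{L^2}^2\le C_P\|\nabla v\|_{L^2}^2$ on the compact manifold $\M$ and Theorem~\ref{thm:elliptic_v} bounding $\|\nabla v\|_{L^2}^2$ by $\langle u,L_tu\rangle_\M$, I arrive at
\begin{equation*}
\|u\|_{L^2(\M)}^2\le 2\|u-v\|_{L^2}^2+4\|v-\bar v\|_{L^2}^2+4|\M|{\bar v}^2\le C_1\langle u,L_tu\rangle_\M+C_2\sqrt{t}\,\|u\|_{L^2(\M)}^2,
\end{equation*}
and absorbing the last term for $t$ small enough that $C_2\sqrt{t}\le 1/2$ yields the theorem.

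The most delicate step is the boundary-layer estimate $\|\phi-1\|_{L^2(\M)}\le Ct^{1/4}$. It requires an explicit description, in local Fermi coordinates adapted to $\p\M$, of how $w_t(\bx)$ transitions from its (essentially constant) interior value to a smaller value near $\p\M$, and exploits the compact support of $R$ together with the $C^\infty$ regularity of both $\M$ and $\p\M$. I expect these geometric facts, as well as the uniform lower bound $\inf_\M w_t\ge c_0$, to be exactly the content of the basic estimates developed in Section~6. Apart from this boundary analysis, the remainder of the argument is soft and uses only Theorem~\ref{thm:elliptic_v}, Cauchy--Schwarz, and a Poincar\'e inequality on $\M$.
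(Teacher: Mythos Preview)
Your approach is correct and yields the result, but it differs from the paper's proof in one significant way: the treatment of $\bar v$.

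The paper avoids your boundary-layer estimate entirely. Since $\int_\M u=0$, one has $|\M|\,\bar v=\int_\M v=\int_\M(v-u)$, and hence by Cauchy--Schwarz $|\bar v|\le|\M|^{-1/2}\|u-v\|_{L^2}$. You have already shown $\|u-v\|_{L^2}^2\le Ct\,\langle u,L_tu\rangle_\M$, so this immediately gives $|\M|\bar v^2\le Ct\,\langle u,L_tu\rangle_\M$ with no geometric analysis of $\phi$ whatsoever. In other words, your own step (i) already controls $\bar v$; the detour through $\|\phi-1\|_{L^2}\le Ct^{1/4}$ is unnecessary. With this simplification your argument becomes: $\|u\|_{L^2}^2\le 2\|u-v\|^2+4\|v-\bar v\|^2+4|\M|\bar v^2\le C\langle u,L_tu\rangle_\M$ directly, with no absorption step.

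The paper itself organizes the endgame differently: after bounding $\|v\|_{L^2}^2\le C\langle u,L_tu\rangle_\M$ (via Poincar\'e and the $\bar v$ trick above), it does a case split on whether $\|v\|_{L^2}\ge\delta\|u\|_{L^2}$ for a fixed $\delta=w_{\min}/(2w_{\max}+w_{\min})$. In the complementary case it expands $\langle u,L_tu\rangle_\M=\tfrac{2}{t}\int_\M(u-v)^2 w_t+\tfrac{2}{t}\int_\M(u-v)v\,w_t$ and uses the smallness of $\|v\|$ relative to $\|u-v\|$ to extract a lower bound proportional to $\tfrac{1}{t}\|u\|_{L^2}^2$. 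Your linear decomposition is arguably cleaner than this dichotomy, provided you use the simple bound on $\bar v$.
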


Theorem \ref{thm:regularity} is a direct corollary of following two lemmas.
\begin{lemma} For any function $u\in L^2(\mathcal{M})$, 
there exists a constant $C>0$ independent on $t$ and $u$, such that
  \begin{eqnarray*}
    \frac{C_t}{t}\int_\mathcal{M}\int_{\mathcal{M}}R\left(\frac{|\bx-\by|^2}{32t}\right) (u(\bx)-u(\by))^2\mathd \mu_\bx \mathd \mu_\by \ge C\int_\mathcal{M} |\nabla v|^2
\mathd \mu_\bx
  \end{eqnarray*}
where $v$ is the same as defined in~\eqref{eqn:smooth_v}.
\label{lem:elliptic_v}
\end{lemma}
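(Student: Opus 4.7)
My plan is to establish the inequality in two main steps: first derive a pointwise bound of the form $|\nabla v(\bx)|^2 \leq (C/t)\int_\M K(\bx,\by)(u(\by)-v(\bx))^2\,\mathd\mu_\by$ for a kernel $K$ compatible with the LHS, and then integrate and convert the $(u(\by)-v(\bx))^2$ on the right into $(u(\bx)-u(\by))^2$.

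For the pointwise bound, I would differentiate the defining identity $w_t(\bx)\,v(\bx) = C_t\int_\M R_t(\bx,\by)u(\by)\,\mathd\mu_\by$, using $\int_\M R_t(\bx,\by)\,\mathd\mu_\by = w_t(\bx)$ to subtract off the $v(\bx)\nabla w_t(\bx)$ term. This yields
\[
w_t(\bx)\nabla v(\bx) = \int_\M R'_t(\bx,\by)\,\frac{\bx-\by}{2t}\,\bigl(u(\by)-v(\bx)\bigr)\,\mathd\mu_\by,
\]
where the gradient can be taken in the ambient sense since the intrinsic gradient on $\M$ is its tangential projection and is no larger. Cauchy-Schwarz, applied so that the factor $|\bx-\by|^2/(4t^2)$ is absorbed into one side, together with Corollary~\ref{cor:upper_bound_int_R_t} bounding the resulting moment by $O(1/t)$ and Corollary~\ref{cor:bound_int_R_t} bounding $w_t(\bx)$ below by a positive constant, gives
\[
|\nabla v(\bx)|^2 \leq \frac{C}{t}\int_\M |R'_t(\bx,\by)|\,\bigl(u(\by)-v(\bx)\bigr)^2\,\mathd\mu_\by.
\]

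The crucial step is then to dominate $|R'_t|$ by a rescaled copy of $R_t$ at a coarser scale. Since $R'$ is bounded with compact support in $[0,1]$, while $R(r/8)\geq\delta_0$ for $r\in[0,1]$ (because $R\geq\delta_0$ on $[0,1/2]$), the pointwise inequality $|R'(r)|\leq (\|R'\|_\infty/\delta_0)\,R(r/8)$ holds for all $r\geq 0$. Applied at $r=|\bx-\by|^2/(4t)$, this replaces $|R'_t(\bx,\by)|$ by a constant times $C_t R(|\bx-\by|^2/(32t))$, which is exactly the kernel on the LHS. Integrating over $\bx$ and expanding $(u(\by)-v(\bx))^2\leq 2(u(\by)-u(\bx))^2+2(u(\bx)-v(\bx))^2$, the first summand produces a constant multiple of the LHS directly (after bounding the outer integral $\int K(\bx,\by)\,\mathd\mu_\by$ by $w_{\max}$), so it only remains to show $(1/t)\int_\M (u-v)^2\,\mathd\mu_\bx \leq C\cdot\mathrm{LHS}$.

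For this residual, I would use $u(\bx)-v(\bx) = w_t(\bx)^{-1}\int_\M R_t(\bx,\bz)(u(\bx)-u(\bz))\,\mathd\mu_\bz$, apply Cauchy-Schwarz, and invoke the same rescaling $R(r)\leq (R_{\max}/\delta_0)R(r/8)$ to pass from the kernel $R_t$ to $C_t R(\,\cdot\,/(32t))$, bounding $\int_\M(u-v)^2\,\mathd\mu_\bx$ by a constant times $t$ times the LHS and completing the argument. The main obstacle is precisely this kernel-domination step: the factor $32$ in the statement is engineered so that the two uses of the rescaling $r\mapsto r/8$—once to dominate $|R'|$ by $R$, once to upgrade the scale of $R_t$—both land on the kernel at scale $\sqrt{32t}$ appearing on the LHS. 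This is the only place the assumption $R\geq\delta_0$ on $[0,1/2]$ plays an essential role.
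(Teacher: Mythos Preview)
Your argument is correct and takes a genuinely different route from the paper's. The paper expands the quotient rule for $v=\frac{1}{w_t}\int R_t\,u$ by introducing a \emph{second} integration variable $\by'$ coming from $w_t$, so that $\nabla^i v(\bx)$ becomes a double integral of a kernel $K^i(\bx,\by,\by';t)$ against $u(\by)-u(\by')$; the compact support then forces $|\by-\by'|^2<16t$, which lets them insert $R(|\by-\by'|^2/(32t))$ and finish with one Cauchy--Schwarz. You instead keep a single integral and write $w_t\nabla v=\int R'_t\,\frac{\bx-\by}{2t}(u(\by)-v(\bx))$, then split $(u(\by)-v(\bx))^2$ by the triangle inequality and close with the auxiliary bound $\frac{1}{t}\|u-v\|_{L^2}^2\le C\cdot\mathrm{LHS}$. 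Both approaches rely on the same kernel-domination trick $|R'(r)|\le C\,R(r/8)$ from the assumption $R\ge\delta_0$ on $[0,1/2]$. The paper's route gets the difference $(u(\by)-u(\by'))^2$ in one stroke but at the cost of tracking a three-variable kernel; your route trades that for an extra (short) estimate of $\|u-v\|_{L^2}$, which is in fact an inequality the paper uses elsewhere (in the proof of Theorem~\ref{thm:elliptic_L_t}). Both yield the same constant-free-of-$t$ conclusion.
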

\begin{lemma} If $t$ is small enough, then for any function $u\in L^2(\mathcal{M})$, 
there exists a constant $C>0$ independent on $t$ and $u$, such that
\begin{eqnarray}
   \int_\mathcal{M}\int_{\mathcal{M}}R\left(\frac{|\bx-\by|^2}{32t}\right) (u(\bx)-u(\by))^2\mathd \mu_\bx \mathd \mu_\by \le C
\int_\mathcal{M}\int_{\mathcal{M}}R\left(\frac{|\bx-\by|^2}{4t}\right) (u(\bx)-u(\by))^2\mathd \mu_\bx \mathd \mu_\by\nonumber.
\end{eqnarray}
\label{lem:bigt2smallt}
\end{lemma}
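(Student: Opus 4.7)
The strategy is a chaining and averaging argument: each long-range difference $u(\bx)-u(\by)$ on the support $\{|\bx-\by|\le\sqrt{32t}\}$ of the LHS kernel is expressed via a chain of a bounded number of short jumps through randomly placed intermediate points; Cauchy--Schwarz passes to squared short-range differences; and a change of variables identifies the averaged result with the short-range Dirichlet form on the RHS, up to a constant.

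For the setup, I fix a universal integer $N$ (e.g.\ $N=8$) and a small constant $c>0$ so that the chain step size plus randomization stays strictly below $\sqrt{2t}$. For $t$ sufficiently small relative to the reach of $(\M,\p\M)$, Proposition~\ref{prop:local_param} provides, for every pair $\bx,\by\in\M$ with $|\bx-\by|\le\sqrt{32t}$, a common local chart $\Phi:\Omega\to U\subset\M$ with convex parameter domain $\Omega\subset\R^k$, bounded Jacobian, and distance distortion close to $1$. Writing $\alpha=\Phi^{-1}(\bx)$, $\beta=\Phi^{-1}(\by)$, and choosing perturbations $w=(w_1,\ldots,w_{N-1})\in (B^k_r(0))^{N-1}$ with radius $r=c\sqrt{t}$, define the chain
\[
\bz_0=\bx,\qquad \bz_i=\Phi\!\left(\alpha+\tfrac{i}{N}(\beta-\alpha)+w_i\right)\quad (1\le i\le N-1),\qquad \bz_N=\by.
\]
Convexity of $\Omega$ keeps the chain parameters inside $\Omega$, and the distortion bound Proposition~\ref{prop:local_param}(iv) yields $|\bz_i-\bz_{i+1}|^2\le 2t$ for every $i$, so that $R\bigl(|\bz_i-\bz_{i+1}|^2/(4t)\bigr)\ge\delta_0$.

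For the execution, Cauchy--Schwarz gives $(u(\bx)-u(\by))^2 \le N\sum_{i=0}^{N-1}(u(\bz_i)-u(\bz_{i+1}))^2$ pointwise in $w$. Averaging in $w$ against the normalized Lebesgue measure on $(B^k_r)^{N-1}$, multiplying by $R(|\bx-\by|^2/(32t))$, and integrating over $\bx,\by\in\M$, I would estimate each of the $N$ summands separately. For an interior index $i$, perform the change of variables $(w_i,w_{i+1})\mapsto(\bz_i,\bz_{i+1})$ with $\bx,\by$ fixed, whose Jacobian is bounded by Proposition~\ref{prop:local_param}(iii); the remaining $w_j$'s trivially integrate to $|B_r|^{N-3}$. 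Next, swap the order of integration: with $(\bz_i,\bz_{i+1})$ fixed, the admissible $(\bx,\by)$-region has volume $\sim|B_r|^2$ (the linear map $(w_i,w_{i+1})\mapsto(\alpha,\beta)$ has Jacobian determinant $\sim N^{2k}$), and on this region $R(|\bx-\by|^2/(32t))\le\|R\|_\infty$, contributing $\lesssim|B_r|^2$. The $|B_r|\sim t^{k/2}$ factors cancel the $|B_r|^{-(N-1)}$ normalization exactly, and the indicator of the $(\bz_i,\bz_{i+1})$-admissible set is bounded by $\delta_0^{-1}R(|\bz_i-\bz_{i+1}|^2/(4t))$ by the chain distance estimate, producing exactly the RHS integrand. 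The endpoint indices $i=0$ and $i=N-1$ are handled analogously with a single change of variables $w_1\mapsto\bz_1$ (respectively $w_{N-1}\mapsto\bz_{N-1}$) and an admissible $(\bx,\by)$-region of volume $\sim|B_r|$, so that the factors match again.

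The main obstacle is the careful bookkeeping of the $|B_r|\sim t^{k/2}$ factors arising from the normalization, the trivially integrated-out perturbations, the change-of-variables Jacobians, and the residual $(\bx,\by)$ integration, to confirm that they cancel precisely. A secondary concern is that the chart $\Phi$ depends on the base point; this can be resolved by covering $\M$ by finitely many charts with a subordinate partition of unity (which is legitimate for $t$ small enough that $\sqrt{32t}$ is below the Lebesgue number of the cover), or equivalently by using a moving chart depending continuously on $\bx$ via Proposition~\ref{prop:local_param}(ii). The final constant depends only on $\M,\p\M$, the kernel constants $\delta_0$ and $\|R\|_\infty$, and the fixed integer $N$.
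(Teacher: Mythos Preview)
Your proposal is correct and follows the same chaining strategy as the paper: localize to charts from Proposition~\ref{prop:local_param}, telescope $u(\bx)-u(\by)$ through a bounded number of intermediate points along the straight segment in parameter coordinates, apply Cauchy--Schwarz, use $R(|\bz_i-\bz_{i+1}|^2/(4t))\ge\delta_0$, and change variables. The paper does exactly this with a finite cover of $\M$ by balls (your ``secondary concern'' is handled precisely that way) and a chain of $16$ deterministic equal subdivisions.

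The one substantive difference is that the paper uses \emph{no} random perturbations $w_i$: since the deterministic map $(\xi_\bx,\xi_\by)\mapsto(\xi_{\bz_j},\xi_{\bz_{j+1}})$ is linear in chart coordinates with constant Jacobian $(1/16)^k$, the paper changes variables from $(\bx,\by)$ to $(\bz_j,\bz_{j+1})$ directly and is done. Your averaging scheme works too, but the randomization is unnecessary here and creates exactly the ``main obstacle'' (the $|B_r|$ bookkeeping) you flag; the paper's deterministic version sidesteps that entirely. Incidentally, your Jacobian $N^{2k}$ for $(w_i,w_{i+1})\mapsto(\alpha,\beta)$ should be $N^k$ (the $2\times2$ block has determinant $1/N$, tensored with $I_k$), though this only affects the constant.
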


The proofs of the above two lemmas are put in Appendix B and C. 
Once we have Lemma~\ref{lem:elliptic_v} and Lemma \ref{lem:bigt2smallt}, Theorem \ref{thm:elliptic_v} becomes obvious by noticing that:
\begin{eqnarray*}
\left<u,L_tu\right>_\M&=&\int_\mathcal{M}\int_{\mathcal{M}}R\left(\frac{|\bx-\by|^2}{4t}\right) u(\bx)(u(\bx)-u(\by))\mathd \mu_\by \mathd \mu_\bx\nonumber\\
&=&-\int_\mathcal{M}\int_{\mathcal{M}}R\left(\frac{|\bx-\by|^2}{4t}\right) u(\by)(u(\bx)-u(\by))\mathd \mu_\by \mathd \mu_\bx\nonumber\\
&=&\frac{1}{2}\int_\mathcal{M}\int_{\mathcal{M}}R\left(\frac{|\bx-\by|^2}{4t}\right) (u(\bx)-u(\by))^2\mathd \mu_\by \mathd \mu_\bx.\nonumber
\end{eqnarray*}

Now, we turn to prove Theorem \ref{thm:elliptic_L_t}.
\begin{proof} {\it of Theorem \ref{thm:elliptic_L_t}} \\
By Theorem~\ref{thm:elliptic_v} and the Poincar\'{e} inequality, there exists a constant $C>0$, such that 
  \begin{eqnarray*}
    \int_\mathcal{M} (v(\bx)-\bar{v})^2\mathd \mu_\bx \le \frac{CC_t}{t}\int_\mathcal{M}\int_{\mathcal{M}} R\left(\frac{|\bx-\by|^2}{4t}\right)  (u(\bx)-u(\by))^2\mathd \mu_\bx \mathd \mu_\by      
  \end{eqnarray*}
where $\bar{v}=\frac{1}{|\mathcal{M}|}\int_\mathcal{M} v(\bx)\mathd \mu_\bx$ and 
\begin{eqnarray*}
v(\bx)=\frac{C_t}{w_t(\bx)}\int_{\mathcal{M}}R\left(\frac{|\bx-\by|^2}{4t}\right) u(\by)\mathd \mu_\by, 
\end{eqnarray*}
At the same time, we have
\begin{eqnarray*}
&&  |\mathcal{M}||\bar{v}|=\left|\int_\mathcal{M} v(\bx)\mathd \mu_\bx\right|\nonumber\\
&=&\left|\int_\mathcal{M}\int_\mathcal{M}\frac{C_t}{w_t(\bx)}R\left(\frac{|\bx-\by|^2}{4t}\right) (u(\by)-u(\bx))\mathd \mu_\by\mathd \mu_\bx\right|\nonumber\\
&\le &\left(\int_\mathcal{M}\int_\mathcal{M}\frac{C_t}{w_t(\bx)} R\left(\frac{|\bx-\by|^2}{4t}\right)  \mathd \mu_\by\mathd \mu_\bx\right)^{1/2} \nonumber\\
&&\left(\int_\mathcal{M}\int_\mathcal{M}\frac{C_t}{w_t(\bx)} R\left(\frac{|\bx-\by|^2}{4t}\right) (u(\by)-u(\bx))^2\mathd \mu_\by\mathd \mu_\bx\right)^{1/2}\nonumber\\
&\le& C|\mathcal{M}|^{1/2}\left(C_t\int_\mathcal{M}\int_\mathcal{M} R\left(\frac{|\bx-\by|^2}{4t}\right) (u(\by)-u(\bx))^2\mathd \mu_\by\mathd \mu_\bx\right)^{1/2} 
\end{eqnarray*}
where the second equality comes from $\int_{\M} u(\bx)\mathd \mu_\bx = 0$. 
This enables us to upper bound the $L_2$ norm of $v$ as follows. For $t$ sufficiently small, 
\begin{align*}
  \int_\mathcal{M} (v(\bx))^2\mathd \mu_\bx \le& 2\int_M (v(\bx) - \bar{v})^2 \mathd \bx + 2 \int_M \bar{v}^2 \mathd \mu_\bx \nonumber \\ 
   \le& \frac{CC_t}{t}\int_\mathcal{M}\int_{\mathcal{M}} R\left(\frac{|\bx-\by|^2}{4t}\right) (u(\bx)-u(\by))^2\mathd \mu_\bx \mathd \mu_\by
\end{align*}
Let $\delta=\frac{w_{\min}}{2w_{\max}+w_{\min}}$ where $w_{\min}=\min_\bx w_t(\bx)$ and $w_{\max}=\max_\bx w_t(\bx)$. 
If $u$ is smooth and close to its smoothed version $v$, in particular, 
\begin{eqnarray}
  \int_\mathcal{M} |v(\bx)|^2\mathd \mu_\bx\ge \delta^2 \int_\mathcal{M} |u(\bx)|^2\mathd \mu_\bx,
\label{eqn:smooth_u}
\end{eqnarray}
then the theorem is proved. 

Now consider the case where $\eqref{eqn:smooth_u}$ does not hold.  Note that we now have
\begin{align}
  \|u-v\|_{L^2(\M)} \ge& \|u\|_{L_2(\M)}-\|v\|_{L_2(\M)}> (1-\delta)\|u\|_{L_2(\M)}\nonumber\\
  >&\frac{1-\delta}{\delta}\|v\|_{L_2(\M)}=\frac{2w_{\max}}{w_{\min}}\|v\|_{L_2(\M)}.\nonumber 
\end{align}
Then we have  
\begin{eqnarray*}
  &&\frac{C_t}{t}\int_\mathcal{M}\int_{\mathcal{M}} R\left(\frac{|\bx-\by|^2}{4t}\right) (u(\bx)-u(\by))^2\mathd \mu_\bx \mathd \mu_\by\nonumber\\
&=&\frac{2C_t }{t}\int_\mathcal{M} u(\bx)\int_{\mathcal{M}} R\left(\frac{|\bx-\by|^2}{4t}\right) (u(\bx)-u(\by))\mathd \mu_\by \mathd \mu_\bx\nonumber\\
&=&\frac{2 }{t}\left(\int_\mathcal{M} u^2(\bx)w(\bx)\mathd \mu_\bx-\int_{\mathcal{M}}u(\bx)v(\bx)w(\bx)\mathd \mu_\bx\right)\nonumber\\
&=&\frac{2 }{t}\left(\int_\mathcal{M} (u(\bx)-v(\bx))^2w(\bx)\mathd \mu_\bx+
\int_{\mathcal{M}}(u(\bx)-v(\bx))v(\bx)w(\bx)\mathd \mu_\bx\right)\nonumber\\
&\ge & \frac{2 }{t}\int_\mathcal{M} (u(\bx)-v(\bx))^2w(\bx)\mathd \mu_\bx-\frac{2 }{t}
\left(\int_{\mathcal{M}}v^2(\bx)w(\bx)\mathd \mu_\bx\right)^{1/2}
\left(\int_{\mathcal{M}}(u(\bx)-v(\bx))^2w(\bx)\mathd \mu_\bx\right)^{1/2}\nonumber\\
&\ge& \frac{2 w_{\min}}{t}\int_\mathcal{M} (u(\bx)-v(\bx))^2\mathd \mu_\bx-\frac{2 w_{\max}}{t}
\left(\int_{\mathcal{M}}v^2(\bx)\mathd \mu_\bx\right)^{1/2}
\left(\int_{\mathcal{M}}(u(\bx)-v(\bx))^2\mathd \mu_\bx\right)^{1/2}\nonumber\\
&\ge&\frac{ w_{\min}}{t}\int_\mathcal{M} (u(\bx)-v(\bx))^2\mathd \mu_\bx\ge \frac{ w_{\min}}{t}(1-\delta)^2\int_\mathcal{M} u^2(\bx)\mathd \mu_\bx.
\end{eqnarray*} 
This completes the proof for the theorem.  
\end{proof}

\subsection{Proof of Theorem \ref{thm:regularity}}
With Theorem \ref{thm:elliptic_v} and \ref{thm:elliptic_L_t}, the proof of
Theorem \ref{thm:regularity} is straightforward.
\begin{proof} {\it of Theorem  \ref{thm:regularity}}

Using Theorem \ref{thm:elliptic_L_t}, we have
  \begin{align}
\label{eqn:stable_Lt_l2}
    \|u\|_{L^2(\M)}^2 \le& C \left<u,L_tu\right> = C\int_\M u(\bx)(r(\bx)-\bar{r})\mathd\mu_\bx \\
	 \le& C\|u\|_{L^2(\M)}\|r\|_{L^2(\M)}.\nonumber
  \end{align}
To show the last inequality, we use the fact that 
$$|\bar{r}|=\frac{1}{|\M|}\left|\int_\M r(\bx)\mathd\mu_\bx\right|\le C\|r\|_{L^2(\M)}.$$
This inequality \eqref{eqn:stable_Lt_l2} implies that
\begin{eqnarray}
  \|u\|_{L^2(\M)}\le C\|r\|_{L^2(\M)}.\nonumber
\end{eqnarray}
Now we turn to estimate $\|\nabla u\|_{L^2(\M)}$. 
Notice that we have the following expression for $u$, since $u$ satisfies the integral equation \eqref{eq:integral-homo}.
\begin{eqnarray}
u(\bx)=v(\bx)+\frac{t}{w_t(\bx)}\,(r(\bx) - \bar{r}),\nonumber
\end{eqnarray}
where
\begin{eqnarray}
  v(\bx)=\frac{1}{w_t(\bx)}\int_{\M}R_t(\bx,\by)u(\by)\mathd\mu_\by,\quad w_t(\bx)=\int_{\M}R_t(\bx,\by)\mathd\mu_\by.\nonumber
\end{eqnarray}
By Theorem \ref{thm:elliptic_v}, we have
\begin{align*}
\|\nabla u\|_{L^2(\M)}^2 \le&   2\|\nabla v\|_{L^2(\M)}^2+ 2t^2\left\|\nabla \left(\frac{r(\bx) - \bar{r}}{w_t(\bx)}\right)\right\|_{L^2(\M)}^2\\
\le & C \left<u,L_tu\right> + Ct\|r\|_{L^2(\M)}^2+Ct^2\|\nabla r\|_{L^2(\M)}^2\nonumber\\
\le& C\|u\|_{L^2(\M)}\|r\|_{L^2(\M)}+ Ct\|r\|_{L^2(\M)}^2+Ct^2\|\nabla r\|_{L^2(\M)}^2\nonumber\\
\le& C\|r\|_{L^2(\M)}^2+Ct^2\|\nabla r\|_{L^2(\M)}^2\nonumber\\
\le & C\left(\|r\|_{L^2(\M)}+t\|\nabla r\|_{L^2(\M)}\right)^2.\nonumber
\end{align*}
This completes the proof. 
\end{proof}

\subsection{Proof of Theorem \ref{thm:regularity_boundary}}

The proof of Theorem \ref{thm:regularity_boundary} is more involved.
\begin{proof}
First, we denote
\begin{align*}
r(\bx)&=\int_{\p\M}\mathbf{b}(\by)\cdot(\bx-\by)\rhk  \mathd \tau_\by,\\
\bar{r}&=\frac{1}{|\M|}\int_\M \left(\int_{\p\M}\mathbf{b}(\by)\cdot(\bx-\by)\rhk  \mathd \tau_\by\right)\mathd\bx.
\end{align*}
where $|\M|=\int_\M  \mathd \mu_\by$.

The key point of the proof is to show that
  \begin{eqnarray}
\label{eq:est_boundary_whole}
   \left|\int_\M u(\bx)\left(r(\bx)-\bar{r}\right) \mathd \mu_\bx\right|
	\le C\sqrt{t} \;\|\mathbf{b}\|_{H^1(\M)} \|u\|_{H^1(\M)}.
  \end{eqnarray}

First, notice that
$$|\bar{r}|\le 
C\sqrt{t}\;\|\mathbf{b}\|_{L^2(\p\M)}\le C\sqrt{t}\;\|\mathbf{b}\|_{H^1(\M)}.$$
Then it is sufficient to show that
  \begin{equation}
\label{eq:est_boundary}
    \left|\int_\M u(\bx)\left(\int_{\p\M}\mathbf{b}(\by)\cdot(\bx-\by)\bar{R}_t(\bx,\by)
 \mathd \tau_\by\right) \mathd \mu_\bx\right|\le C\sqrt{t} \;\|\mathbf{b}\|_{H^1(\M)} \|u\|_{H^1(\M)}.
  \end{equation}
Direct calculation gives that
\begin{eqnarray*}
  &&|2t\nabla\rrhk-(\bx-\by)\bar{R}_t(\bx,\by)|\le C|\bx-\by|^2\rhk,
\end{eqnarray*}
where $\rrhk=C_t\bar{\bar{R}}\left(\frac{\|\bx-\by\|^2}{4t}\right)$ and $\bar{\bar{R}}(r)=\int_{r}^{\infty}\bar{R}(s)\mathd s$.
This implies that
\begin{align}
\label{eq:est_boundary_1}
  &\left|\int_\M u(\bx) \int_{\p\M}\mathbf{b}(\by)\left((\bx-\by)\bar{R}_t(\bx,\by)+2t\nabla\rrhk\right)
 \mathd \tau_\by \mathd \mu_\bx\right|\\
\le & C\int_\M |u(\bx) |\int_{\p\M}|\mathbf{b}(\by)||\bx-\by|^2\rhk  \mathd \tau_\by\mathd \mu_\bx\nonumber\\
\le &Ct\|\mathbf{b}\|_{L^2(\p\M)} \left(\int_{\p\M}\left(\int_\M\rhk  \mathd\mu_\bx\right)
\left(\int_\M |u(\bx)|^2\rhk  \mathd \mu_\bx\right) \mathd \tau_\by\right)^{1/2}\nonumber\\
\le & Ct\|\mathbf{b}\|_{H^1(\M)} \left(\int_{\M} |u(\bx)|^2 
\left(\int_{\p\M} \rhk  \mathd \tau_\by\right)\mathd \mu_\bx\right)^{1/2}\nonumber\\
\le & Ct^{3/4}\|\mathbf{b}\|_{H^1(\M)}\|u\|_{L^2(\M)}.\nonumber
\end{align}
On the other hand, using the Gauss integral formula, we have
 \begin{eqnarray}
\label{eq:gauss_boundary}
&&    \int_\M u(\bx) \int_{\p\M}\mathbf{b}(\by)\cdot\nabla\rrhk  \mathd \tau_\by\mathd \mu_\bx\\
&=& \int_{\p\M} \int_{\M}u(\bx) T_\bx(\mathbf{b}(\by))\cdot\nabla\rrhk  \mathd \mu_\bx \mathd \tau_\by\nonumber\\
&=&\int_{\p\M} \int_{\p\M}\mathbf{n}(\bx)\cdot T_\bx(\mathbf{b}(\by))u(\bx)\rrhk   \mathd \tau_\bx\mathd \tau_\by\nonumber\\
&&-\int_{\p\M} \int_{\M}\text{div}_\bx[u(\bx) T_\bx(\mathbf{b}(\by))]\rrhk  \mathd \mu_\bx\mathd \tau_\by.\nonumber
  \end{eqnarray}
Here $T_\bx$ is the projection operator to the tangent space on $\bx$. To get the first equality, we use the 
fact that $\nabla\rrhk$ belongs to the tangent space on $\bx$, such that $\mathbf{b}(\by)\cdot\nabla\rrhk=T_\bx(\mathbf{b}(\by))\cdot\nabla\rrhk$
and $\bn(\bx)\cdot T_\bx(\mathbf{b}(\by))=\bn(\bx)\cdot \mathbf{b}(\by)$ where $\bn(\bx)$ is the out normal of $\p\M$ at $\bx\in \p\M$.

For the first term, we have
  \begin{align}
\label{eq:est_boundary_2}
&    \left|\int_{\p\M} \int_{\p\M}\mathbf{n}(\bx)\cdot T_\bx(\mathbf{b}(\by))u(\bx)\rrhk   \mathd \tau_\bx\mathd \tau_\by\right|\\
= &\left|\int_{\p\M} \int_{\p\M}\mathbf{n}(\bx)\cdot \mathbf{b}(\by)u(\bx)\rrhk   \mathd \tau_\bx\mathd \tau_\by\right|\nonumber\\
\le &C\|\mathbf{b}\|_{L^2(\p\M)}  \left(\int_{\p\M} \left(\int_{\p\M}|u(\bx)|\rrhk  \mathd \tau_\bx\right)^2
 \mathd \tau_\by\right)^{1/2}\nonumber\\
\le &C\|\mathbf{b}\|_{H^1(\M)}  \left(\int_{\p\M} \left(\int_{\p\M}\rrhk  \mathd \tau_\bx\right)
 \left(\int_{\p\M}|u(\bx)|^2\rrhk  \mathd \tau_\bx\right)
 \mathd \tau_\by\right)^{1/2}\nonumber\\
\le &Ct^{-1/2}\; \|\mathbf{b}\|_{H^1(\M)} \|u\|_{L^2(\p\M)}\le Ct^{-1/2}\; \|\mathbf{b}\|_{H^1(\M)} \|u\|_{H^1(\M)}.\nonumber
  \end{align}
We can also bound the second term on the right hand side of \eqref{eq:gauss_boundary}. By using the assumption that $\M\in C^\infty$, we have
\begin{align*}
&|  \text{div}_\bx[u(\bx) T_\bx(\mathbf{b}(\by))]|\nonumber\\
\le& |\nabla u(\bx)||T_\bx(\mathbf{b}(\by))|| |
+|u(\bx)||\text{div}_\bx[T_\bx(\mathbf{b}(\by))]|| |+|\nabla  ||u(\bx)T_\bx(\mathbf{b}(\by))| \\
\le & C(|\nabla u(\bx)|+|u(\bx)|)|\mathbf{b}(\by)|
\end{align*}
where the constant $C$ depends on the curvature of the manifold $\M$.

Then, we have
  \begin{eqnarray}
\label{eq:est_boundary_3}
&&    \left|\int_{\p\M} \int_{\M}\text{div}_\bx[u(\bx)T_\bx(\mathbf{b}(\by))]\rrhk   \mathd \mu_\bx\mathd \tau_\by\right|\\
&\le &C   \int_{\p\M}\mathbf{b}(\by)  \int_{\M}(|\nabla u(\bx)|+|u(\bx)|)\rrhk  \mathd \mu_\bx\mathd \tau_\by\nonumber\\
&\le &C \|\mathbf{b}\|_{L^2(\p\M)}  \left(\int_{\M} (|\nabla u(\bx)|^2+|u(\bx)|^2) 
\left(\int_{\p\M}\rrhk  \mathd \tau_\by\right)\mathd \mu_\bx\right)^{1/2}\nonumber\\
\quad\quad&\le &C t^{-1/4}\;\|\mathbf{b}\|_{H^1(\M)}  \|u\|_{H^1(\M)}.\nonumber
  \end{eqnarray}
Then, the inequality \eqref{eq:est_boundary} is obtained from \eqref{eq:est_boundary_1},
\eqref{eq:gauss_boundary}, \eqref{eq:est_boundary_2} and \eqref{eq:est_boundary_3}.
Now, using Theorem \ref{thm:elliptic_L_t}, we have
  \begin{eqnarray}
    \|u\|_{L^2(\M)}^2\le C \int_\M u(\bx)L_tu(\bx) \mathd\mu_\bx
    \le C\sqrt{t}\;\|\mathbf{b}\|_{H^1(\M)}  \|u\|_{H^1(\M)}.
\label{eq:est_l2_boundary}
  \end{eqnarray}
Note $r(\bx)=\int_{\p\M}(\bx-\by)\cdot \mathbf{b}(\by)\bar{R}_t(\bx,\by) \mathd \tau_\by$. Direct calculation gives us that
\begin{eqnarray*}
  \|r(\bx)\|_{L^2(\M)}&\le& Ct^{1/4}\|\mathbf{b}\|_{H^1(\M)} ,~\text{and}\\
  \|\nabla r(\bx)\|_{L^2(\M)}&\le& Ct^{-1/4}\|\mathbf{b}\|_{H^1(\M)} .
\end{eqnarray*}
The integral equation $L_t u=r-\bar{r}$ gives that
\begin{eqnarray*}
u(\bx)=v(\bx)+\frac{t}{w_t(\bx)}\,(r(\bx)-\bar{r})
\end{eqnarray*}
where
\begin{eqnarray*}
  v(\bx)=\frac{1}{w_t(\bx)}\int_{\M}R_t(\bx,\by)u(\by) \mathd\mu_\by,\quad w_t(\bx)=\int_{\M}R_t(\bx,\by) \mathd\mu_\by.
\end{eqnarray*}
By Theorem \ref{thm:elliptic_v}, we have
\begin{eqnarray}
\label{eq:est_dl2_boundary}
&&\|\nabla u\|_{L^2(\M)}^2 \\
&\le&   2\|\nabla v\|_{L^2(\M)}^2+ 2t^2\left\|\nabla \left(\frac{r(\bx)-\bar{r}}{w_t(\bx)}\right)\right\|_{L^2(\M)}^2\nonumber\\
&\le & C \int_\M u(\bx)L_tu(\bx) \mathd\mu_\bx + Ct\|r\|_{L^2(\M)}^2+Ct^2\|\nabla r\|_{L^2(\M)}^2\nonumber\\
&\le& C\sqrt{t}\;\|\mathbf{b}\|_{H^1(\M)}  \|u\|_{H^1(\M)}+ Ct\|r\|_{L^2(\M)}^2+Ct^2\|\nabla r\|_{L^2(\M)}^2\nonumber\\
&\le& C\|\mathbf{b}\|_{H^1(\M)} \left(\sqrt{t}\|u\|_{H^1(\M)}+ Ct^{3/2}\right).\nonumber
\end{eqnarray}
Using \eqref{eq:est_l2_boundary} and \eqref{eq:est_dl2_boundary}, we have
\begin{eqnarray*}
  \|u\|_{H^1(\M)}^2\le C\|\mathbf{b}\|_{H^1(\M)}  \left(\sqrt{t}\|u\|_{H^1(\M)}+ Ct^{3/2}\right), 
\end{eqnarray*}
which proves the theorem.
\end{proof}

\section{Discussion and Future Work}
\label{sec:discussion}
We have proved the convergence of the point integral method for Poisson equations
on the submanifolds isometrically embedded in Euclidean spaces. 
Our analysis shows that the convergence rate of PIM is $h^{1/4}(P,\bV,\M)$ in $H^1$ norm.
 However, our experimental results in 
\cite{LSS}, show the empirical convergence rate is about linear.
Indeed, there are places in our analysis where we believe the error bounds can be improved.  

On the other hand, the quadrature rule we used in the point integral method is of low accuracy. 
If we have more information, such as the local mesh or local hypersurface, we could use high order quadrature rule 
to improve the accuracy of the point integral method. 

Based on the convergence result in this paper, we are able to show
that the spectra of the graph Laplacian with a proper normalization 
converge to the spectra of $\Delta_\M$ with the Neumann boundary condition. 
Moreover, we can obtain an estimate of the rate of the spectral convergence. 
The point integral method also applies to 
Poisson equation with Dirichlet boundary. And we can also show the convergence of the point integral method for the 
Dirichlet boundary. These results will be reported in the 
subsequent papers.



\vspace{0.5in}
\noindent
{\bf Acknowledgments.}
This research was supported by NSFC Grant 11371220. 

\appendix
\label{sec:appendix}
\vspace{8mm}
\noindent

\section{ Proof of Proposition \ref{prop:local_param}}

To prove the proposition, we first cite a few results from Riemannian geometry on isometric embeddings. 
For a submanifold $\M$ embedded in $\R^d$, let $d_\M: \M\times\M\rightarrow \R$
be the geodesic distance on $\M$, and $T_\bx\M$ and $N_\bx\M$ be the tangent space and the normal space 
of $\M$ at point $\bx\in \M$ respectively.

\begin{lemma}(eg \cite{Dey11hom}) Assume $\M$ is a submanifold  isometrically embedded in $\R^d$ with reach $\sigma>0$. 
For any two $\bx, \by$ on $\M$ with $|\bx-\by| \leq \sigma/2$, $$|\bx-\by|\leq d_\M(\bx, \by) \leq |\bx-\by|(1+\frac{2|\bx-\by|^2}{\sigma^2}).$$
\label{lem:geod_bound}
\end{lemma}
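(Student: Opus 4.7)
The lower bound $|\bx-\by| \le d_\M(\bx,\by)$ is immediate, since any piecewise smooth path on $\M$ joining $\bx$ and $\by$ has length at least the length of the straight segment in $\R^d$. So all of the work is in the upper bound.

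My plan is to use the classical Federer bound linking reach to extrinsic curvature: if $\M$ has reach $r$, then for any unit $v\in T_p\M$ and any unit normal $\nu\in N_p\M$, the second fundamental form satisfies $|\langle \mathrm{II}(v,v),\nu\rangle|\le 1/r$. This is a short argument from the definition of reach: if the shape operator at $p$ in direction $\nu$ had an eigenvalue exceeding $1/r$, the osculating sphere of the geodesic in that direction would have center within distance less than $r$ of $\M$ yet admit two nearest points, contradicting the uniqueness of the nearest-point projection on the $r$-tubular neighborhood.

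Consequently, any unit-speed minimizing geodesic $\gamma:[0,L]\to \M$ from $\bx$ to $\by$, viewed as a curve in $\R^d$, has acceleration vector $\gamma''(s)\in N_{\gamma(s)}\M$ of norm at most $1/r$. A standard chord-arc comparison with a circular arc of radius $r$ (integrate $\gamma''$ twice against the linear functional perpendicular to $\by-\bx$) yields
\begin{equation*}
|\bx-\by|\;\ge\; 2r\sin\!\bigl(L/(2r)\bigr),\qquad \text{provided }L\le \pi r.
\end{equation*}
Under the hypothesis $|\bx-\by|\le r/2$, the minimizer cannot wrap around ($L<\pi r$), so this inequality inverts to $L\le 2r\arcsin(|\bx-\by|/(2r))$. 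Setting $u=|\bx-\by|/(2r)\in[0,1/4]$ and Taylor-expanding $\arcsin u=u+u^3/6+O(u^5)$, a direct check shows $\arcsin u\le u(1+8u^2)$ on $[0,1/4]$; translating back gives exactly $d_\M(\bx,\by)\le |\bx-\by|\bigl(1+2|\bx-\by|^2/r^2\bigr)$.

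The main obstacle I anticipate is carefully justifying the regime $L<\pi r$ before inverting the sine — this needs a short continuity argument (the set of pairs with a short minimizer is open and nonempty near the diagonal, and its boundary within $\{|\bx-\by|\le r/2\}$ would force $L=\pi r$, contradicting the chord-arc inequality just proved). An alternative that avoids this entirely is to observe that the segment $[\bx,\by]$ lies within distance $|\bx-\by|/2\le r/4$ of $\M$, project it onto $\M$ via the nearest-point map $\pi_\M$ (well-defined on the $r$-tubular neighborhood), and bound the length of the projected curve using the explicit estimate $\|D\pi_\M(p)\|\le (1-d(p,\M)/r)^{-1}$; this yields the same inequality up to the constant and is the route I would use if the constant $2$ in the stated bound proves awkward to achieve directly from the arcsine Taylor series.
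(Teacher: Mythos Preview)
The paper does not supply its own proof of this lemma; it is quoted from an external reference (the parenthetical ``eg \cite{Dey11hom}''), so there is no argument in the paper to compare against.

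Your plan is the standard one and is essentially correct: Federer's reach theorem bounds the second fundamental form, hence a unit-speed geodesic has $|\gamma''|\le 1/r$ in $\R^d$, and a chord--arc comparison for curves of bounded curvature does the rest. Two minor remarks. First, the sharp bound $|\bx-\by|\ge 2r\sin(L/(2r))$ is a Schur-type statement whose clean proofs are for plane curves; your parenthetical ``integrate $\gamma''$ twice'' does not quite produce it. The easier estimate $\langle\gamma'(s),\gamma'(0)\rangle\ge\cos(s/r)$ (from $|\gamma''|\le 1/r$ and the spherical triangle inequality on the tangent indicatrix) gives $|\bx-\by|\ge r\sin(L/r)$, which already yields $L\le r\arcsin(|\bx-\by|/r)\le |\bx-\by|(1+2|\bx-\by|^2/r^2)$ on the range $|\bx-\by|\le r/2$, so you can avoid the sharper comparison entirely. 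Second, your worry about first securing $L<\pi r$ is legitimate, and your own alternative dispatches it cleanly: the segment $[\bx,\by]$ lies in the $(r/4)$-tube of $\M$, and since $\|D\pi_\M\|\le r/(r-r/4)=4/3$ there, the projected curve has length at most $\tfrac{4}{3}|\bx-\by|\le \tfrac{2}{3}r$, giving an a priori bound $L<r$ before any inversion. With these two tweaks the argument is complete, with the stated constant.
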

\begin{lemma}(eg \cite{pcdlp2009}) Assume $\M$ is a submanifold  isometrically embedded in $\R^d$ with reach $\sigma>0$. 
For any two $\bx, \by$ on $\M$ with $|\bx-\by| \leq \sigma/2$, $$ \cos \angle T_\bx\M, T_\by \M \leq 1- \frac{2|\bx-\by|^2}{\sigma^2}. $$
\label{lem:angle_bound}
\end{lemma}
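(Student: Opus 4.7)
The plan is to invoke the reach hypothesis to show that the tangent spaces $T_\bx\M$ and $T_\by\M$ must lie close when $\bx, \by \in \M$ are close. The definition of reach says that for every $\bx \in \M$ and every unit normal $\mathbf{n}$ at $\bx$, the open ball $B(\bx + r\mathbf{n}, r)$ is disjoint from $\M$. I will use this twice: once to constrain the chord $\by - \bx$ and once to constrain tangent vectors at $\by$.

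First, I would establish the standard chord-versus-normal estimate: for any $\bx, \by \in \M$ and any unit normal $\mathbf{n}$ at $\bx$, $|\langle \by - \bx, \mathbf{n}\rangle| \le \frac{|\bx - \by|^2}{2r}$. This follows by expanding $|\by - (\bx \pm r\mathbf{n})|^2 \ge r^2$ and using that both $+\mathbf{n}$ and $-\mathbf{n}$ are admissible normals, so the first-order term in $\langle \by - \bx, \mathbf{n}\rangle$ gets controlled by the squared chord length.

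Next, for any unit vector $v \in T_\by\M$, I would pick a smooth curve $\gamma \subset \M$ with $\gamma(0) = \by$ and $\dot\gamma(0) = v$, apply the chord estimate to the pair $(\bx, \gamma(s))$, and differentiate at $s = 0$. This yields the pointwise bound $|\langle v, \mathbf{n}\rangle| \le \frac{|\bx-\by|}{r}$ for every unit normal $\mathbf{n}$ at $\bx$, so that the full normal component $\|P_{N_\bx\M}v\|$ is controlled in terms of $|\bx-\by|/r$, up to a dimensional factor from summing over an orthonormal basis of $N_\bx\M$.

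Finally, I would translate these pointwise projection bounds into the claimed estimate for $\cos \angle(T_\bx\M, T_\by\M)$, using the fact that this cosine is determined by the projection norms $\|P_{T_\bx\M}v\|$ for unit $v \in T_\by\M$ via the principal-angle characterization. Combining the two previous steps and selecting the extremal $v$ and $\mathbf{n}$ drives the estimate into the stated form $1 - \frac{2|\bx-\by|^2}{r^2}$. The main obstacle will be pinning down the sharp constant $2$ on the right-hand side: this calls for a careful extremization rather than the naive bounds obtained above, and it is precisely here that the hypothesis $|\bx-\by| \le r/2$ enters critically, letting me absorb higher-order correction terms and produce the clean quadratic form claimed.
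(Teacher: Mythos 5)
The paper offers no proof of this lemma at all; it is quoted from the literature (the citation \cite{pcdlp2009}), so your attempt has to stand on its own. Your overall route is the standard one: reach gives the chord--normal estimate $|\langle \by-\bx,\mathbf{n}\rangle|\le |\bx-\by|^2/(2r)$ (your Step 1 is correct), this should be upgraded to a bound on the normal component of unit vectors $v\in T_\by\M$, and the principal-angle characterization converts that into the claimed cosine bound. (Note the inequality as printed is a typo: it must be $\cos\angle T_\bx\M,T_\by\M \ge 1-\frac{2|\bx-\by|^2}{r^2}$, which is the lower bound your argument targets and the form used in the proof of Lemma~\ref{lem:dist_interior}.)

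The genuine gap is Step 2. You cannot get $|\langle v,\mathbf{n}\rangle|\le |\bx-\by|/r$ by ``differentiating the chord estimate at $s=0$'': with $f(s)=\langle\gamma(s)-\bx,\mathbf{n}\rangle$ and $g(s)=|\gamma(s)-\bx|^2/(2r)$, the inequality $|f(s)|\le g(s)$ is slack at $s=0$ (both sides are of order $|\bx-\by|^2/(2r)\ne 0$), and differentiating a slack inequality at an interior point puts no constraint whatsoever on $f'(0)$; the zeroth-order term swamps the first-order one. To extract a tilt bound you must either take a finite step $s\asymp|\bx-\by|$ and control the second-order term of $\gamma$, which requires the bound $1/r$ on the second fundamental form (itself a consequence of reach that has to be proved), or run the usual argument transporting the tangent plane along a geodesic from $\by$ to $\bx$ using that curvature bound together with the geodesic-versus-chord comparison (Lemma~\ref{lem:geod_bound}); either way an ingredient beyond your Step 1 is indispensable. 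Two further points: summing $\langle v,\mathbf{n}_j\rangle^2$ over an orthonormal basis of $N_\bx\M$ introduces a factor of the codimension and cannot yield the stated dimension-free constant --- instead take $\mathbf{n}$ to be the unit vector along $P_{N_\bx\M}v$, so no summation is needed and it suffices to show $\|P_{N_\bx\M}v\|\le \sqrt{2}\,|\bx-\by|/r$, after which $\cos\theta\ge\bigl(1-\|P_{N_\bx\M}v\|^2\bigr)^{1/2}\ge 1-2|\bx-\by|^2/r^2$ for $|\bx-\by|\le r/2$; and this quantitative step, which you defer to a ``careful extremization,'' is precisely the content of the lemma, so as written the proposal stops short of the claim. (A minor extra care: since $\M$ has boundary, a curve in $\M$ with initial velocity $v$ need not exist at $\by\in\p\M$; replace $v$ by $-v$ when $v$ points outward.)
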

\begin{lemma}(eg \cite{NiyogiSW08}) Assume $\M$ is a submanifold  isometrically embedded in $\R^d$ with reach $\sigma>0$. 
Let $N$ be any local normal vector field around a point $\bx\in \M$. Then for any tangent vector $Y\in T_\bx\M$
$$ \frac{<Y, D_Y N>}{<Y, Y>}\leq \frac{1}{\sigma}$$
where $D$ and $<\cdot, \cdot>$ are the standard connection and the standard inner product in $\R^d$. 
\label{lem:2nd_form}
\end{lemma}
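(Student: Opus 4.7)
The plan is to translate the reach hypothesis into an osculating-ball property and then read off the second-fundamental-form bound from a standard variational argument. Since $\langle Y,D_YN\rangle/\langle Y,Y\rangle$ is invariant under rescaling $Y$, I may assume $|Y|=1$, and since the ratio scales linearly with $|N|$ the statement is clearly intended for a unit normal field, so I may take $N$ to be a unit normal at $\bx$.

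The key geometric input I would extract from the reach is the following osculating-ball claim: for every unit normal $n$ to $\M$ at $\bx$, the open ball $B(\bx-rn,\,r)$ is disjoint from $\M$. Indeed, any interior point of this ball lying on $\M$ would be strictly closer to the center $\bx-rn$ than $\bx$ itself, so $\bx-rn$ would have two distinct nearest points on $\M$ inside its $r$-tubular neighbourhood, contradicting the definition of reach $r$. I consider this the one non-routine step in the proof.

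With the osculating ball in hand, I would choose any smooth curve $\gamma:(-\epsilon,\epsilon)\to\M$ with $\gamma(0)=\bx$ and $\gamma'(0)=Y$, and set
\[
f(s)\;=\;\bigl|\gamma(s)-(\bx-rN(\bx))\bigr|^2.
\]
Since $\gamma(s)\in\M$ never enters the open ball while $\gamma(0)$ lies on its boundary, $f$ attains a local minimum at $s=0$, hence $f''(0)\ge 0$. A direct differentiation gives
\[
f''(0)\;=\;2r\,\langle\gamma''(0),N(\bx)\rangle+2|Y|^2,
\]
which rearranges to $\langle\gamma''(0),N(\bx)\rangle\ge -|Y|^2/r$.

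Finally, since $\gamma'(s)\in T_{\gamma(s)}\M$ is orthogonal to $N(\gamma(s))$, the identity $\langle\gamma'(s),N(\gamma(s))\rangle\equiv 0$ holds; differentiating at $s=0$ gives $\langle\gamma''(0),N(\bx)\rangle+\langle Y,D_YN\rangle=0$. Substituting this into the previous inequality yields $\langle Y,D_YN\rangle\le |Y|^2/r$, which is the desired bound after undoing the normalization $|Y|=1$. Beyond the osculating-ball claim, the only place where care is needed is the sign: one must center the ball at $\bx-rN(\bx)$, not at $\bx+rN(\bx)$, to pick up the correct inequality (the ball on the opposite side would yield the matching lower bound $\langle Y,D_YN\rangle\ge -|Y|^2/r$, which together with the lemma applied to $-N$ recovers the familiar two-sided second-fundamental-form estimate).
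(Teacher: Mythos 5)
The paper itself gives no argument for this lemma --- it is simply quoted from \cite{NiyogiSW08} --- so your attempt is being measured against the standard proof found there. Your second half reproduces that standard argument correctly: granting that the open ball of radius $r$ centred at $c=\bx-rN(\bx)$ misses $\M$, the function $f(s)=|\gamma(s)-c|^2$ satisfies $f(0)=r^2$, $f'(0)=2\langle Y,rN(\bx)\rangle=0$, and $f(s)\ge r^2$ along the curve, so $f''(0)=2r\langle\gamma''(0),N(\bx)\rangle+2|Y|^2\ge 0$; combined with differentiating $\langle\gamma'(s),N(\gamma(s))\rangle\equiv 0$ this gives $\langle Y,D_YN\rangle\le |Y|^2/r$. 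Your normalization to a unit normal and your sign bookkeeping (centre at $\bx-rN(\bx)$ rather than $\bx+rN(\bx)$) are also fine.

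The genuine gap sits exactly at the step you yourself flag as the only non-routine one: your justification of the osculating-ball property is a non sequitur. If some $\by\in\M$ lies strictly inside $B(c,r)$, all that follows is that $\bx$ is \emph{not} a nearest point of $\M$ to $c$; the centre then has some (possibly unique) nearest point different from $\bx$, and the definition of reach --- every point within distance $r$ of $\M$ has a \emph{unique} nearest point --- is not contradicted, so no ``two distinct nearest points'' appear. What is actually needed is the nontrivial fact (Federer, Theorem 4.8(12)) that for every $s<r$ the point $c_s=\bx-sN(\bx)$ has $\bx$ as its nearest point, i.e.\ $d(c_s,\M)=s$; the open ball of radius $r$ is then excluded by letting $s\uparrow r$ (if $|\by-c|<r$ then $|\by-c_s|<s$ for $s$ close enough to $r$). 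Proving $d(c_s,\M)=s$ from the nearest-point-uniqueness definition requires a real argument (continuity/Lipschitz bounds for the projection plus a connectedness argument along the normal segment), or one can pass to the equivalent normal-injectivity formulation of reach used in \cite{NiyogiSW08}: a nearest point $q$ of $c_s$ yields a normal vector $c_s-q$ of length $<r$ with $q+(c_s-q)=\bx+(-sN(\bx))$, and injectivity of $(p,v)\mapsto p+v$ on normal vectors of length $<r$ forces $q=\bx$. Either supply such an argument or cite the corresponding result; as written, your one-sentence derivation of the tangent-ball claim does not work, and the rest of the proof rests on it.
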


In what follows, assume the hypotheses on $\M$ and $\p \M$ in Proposition~\ref{prop:local_param} hold. 
We prove the following two lemmas which bound the distortion of certain parametrization, which are used
to build the parametrization stated in Proposition~\ref{prop:local_param}. 

For a point $\bx \in \M,$, let $U_\rho = B_\bx(\rho \sigma)\cap \M $ with $\rho \leq 0.2$. We define the following 
projection map $\Psi: U_\rho \rightarrow T_\bx \M = \R^k $ as the restriction to $U_\rho$ of the projection of $\R^d$ 
onto $T_\bx \M$. It is easy to verify that $\Psi$ is one-to-one.  
Then $\Phi = \Psi^{-1}: \Psi(U_\rho) \rightarrow U_\rho$ is a parametrization 
of $U_\rho$. See Figure~\ref{fig:closest_point_map}. 
We have the following lemma which bounds the distortion of this parametrization.  
\begin{figure}
\begin{center}
\begin{tabular}{c}
\includegraphics[width=0.5\textwidth]{./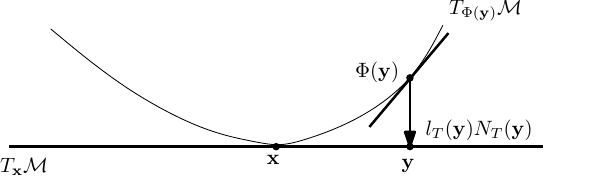}
\end{tabular}
\end{center}
\caption{Parametrization for a neighborhood of a point on $\M$. } 
\label{fig:closest_point_map}
\end{figure}

\begin{lemma}
For any point $\by \in \Psi(U_\rho)$ and any $Y\in T_\by (T_\bx \M)$ for any $\rho \leq 0.2$,  
$$ |Y| \leq | D_Y \Phi(\by) | \leq \frac{1}{1-2\rho^2}|Y|.$$
\label{lem:dist_interior}
\end{lemma}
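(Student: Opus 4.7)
The plan is to exploit the fact that $\Psi$ is the restriction to $U_\rho$ of the \emph{linear} orthogonal projection $P:\R^d\to T_\bx\M$, so that $\Psi\circ\Phi$ is the identity on $\Psi(U_\rho)$; differentiation of this identity then rigidly fixes the $T_\bx\M$-component of $D_Y\Phi(\by)$ and reduces the lemma to a purely linear-algebraic estimate involving the angle between two tangent planes of $\M$.

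First I would differentiate $\Psi\circ\Phi=\mathrm{id}$ at $\by\in\Psi(U_\rho)$ to obtain $P\circ D\Phi(\by)=\mathrm{id}_{T_\bx\M}$. Given any $Y\in T_\by(T_\bx\M)\cong T_\bx\M$, set $Z:=D_Y\Phi(\by)\in T_{\Phi(\by)}\M$ and decompose it orthogonally as $Z=Z_T+Z_N$ with $Z_T\in T_\bx\M$ and $Z_N\in N_\bx\M$. The differentiated identity reads $Z_T=Y$, so Pythagoras gives $|Z|^2=|Y|^2+|Z_N|^2\ge|Y|^2$. This immediately yields the lower bound $|Y|\le|D_Y\Phi(\by)|$.

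For the upper bound, I would invoke Lemma \ref{lem:angle_bound}. Since $\Phi(\by)\in U_\rho\subset B_\bx(\rho r)$ with $\rho\le 0.2<1/2$, that lemma controls the cosine of the (largest principal) angle between the $k$-planes $T_\bx\M$ and $T_{\Phi(\by)}\M$ from below by $1-\frac{2|\bx-\Phi(\by)|^2}{r^2}\ge 1-2\rho^2$. The standard consequence, via the principal-angle characterization $\min_{|W|=1,\,W\in T_{\Phi(\by)}\M}|P(W)|=\cos\theta_{\max}$, is that $|P(Z)|\ge(1-2\rho^2)|Z|$ for every $Z\in T_{\Phi(\by)}\M$. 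Applied to $Z=D_Y\Phi(\by)$ with $P(Z)=Z_T=Y$, this gives $|Y|\ge(1-2\rho^2)|D_Y\Phi(\by)|$, i.e.\ the desired upper bound.

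The only step that requires care is the passage from the angle-between-subspaces bound in Lemma \ref{lem:angle_bound} to the uniform inequality $|P(Z)|\ge(1-2\rho^2)|Z|$ on all of $T_{\Phi(\by)}\M$. This is not a difficulty in the estimate itself but in fixing a consistent definition of the angle between $k$-planes; once one adopts the largest-principal-angle convention, the projection bound is immediate. Everything else is routine linear algebra, so I would not expect any other obstacle.
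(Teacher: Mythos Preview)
Your proof is correct and follows essentially the same approach as the paper: both arguments establish that the orthogonal projection of $D_Y\Phi(\by)$ onto $T_\bx\M$ equals $Y$ (you via the chain rule applied to $\Psi\circ\Phi=\mathrm{id}$, the paper via an explicit formula $\Phi(\by)=\by-l_T(\by)N_T(\by)$ and term-by-term differentiation), and then both invoke Lemma~\ref{lem:angle_bound} to bound $|D_Y\Phi(\by)|$ in terms of $|Y|$ using the angle between $T_\bx\M$ and $T_{\Phi(\by)}\M$. Your derivation of the key projection identity is slightly cleaner, and your explicit remark about the largest-principal-angle convention clarifies a point the paper leaves implicit.
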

\begin{proof}
We have $\Phi(\by) = \by - l_T(\by) N_T(\by)$ where $N_T(\by) \perp T_\bx \M$ for any $\by$ and 
$l_T(\by) = |\by - \Phi(\by)|$.
So $D_Y N_T(\by) \perp T_\bx \M$ for any $\by$ and any $Y\in T_\by (T_\bx \M)$. 
Since $D_Y \Phi = Y - N_T \left(D_Y l_T\right)- l_T \left(D_Y N_T\right)$, 
the projection of $D_Y \Phi$ to $T_\bx \M$ is $Y$. 
At the same time,  $D_Y\Phi$ is on $T_{\Phi(\by)}\M$. Since $|\bx - {\Phi(\by)}|\leq \rho \sigma$, 
from Lemma~\ref{lem:angle_bound}, $\cos \angle T_\bx\M, T_{\Phi(\by)} \M \leq 1- 2\rho^2$. 
This proves the lemma. 
\end{proof}

To ensure the convexity of the parameter domain $\Omega$ in Proposition~\ref{prop:local_param}, 
We need a different parametrization for the points near the boundary. 
For a point $\bx \in \p \M$, let $U_\rho = B_\bx(\rho \sigma) \cap \M$ with $\rho \leq 0.1$. 
We construct a map $\tilde{\Psi}: U_\rho \rightarrow T_\bx\p \M \times \R = \R^k$ as follows.
For any point $\bz \in U_\rho$, let $\bar{\bz}$ be the closest point on $\p \M$ to $\bz$. 
Such $\bar{\bz}$ is unique. Let $\bfn$ be the outward normal of $\p \M$ at $\bar{\bz}$. 
The projection $P$ of $\R^d$ onto $T_{\bar{\bz}} \M$ maps $\bz$ to a point on the line $\ell$ passing through $\bar{\bz}$ 
with the direction $\bfn$. In fact, $P$ projects $N_{\bar{\bz}} \p \M$ onto the line $\ell$. 
If let $ V_{\rho_1} = N_{\bar{\bz}} \p \M \cap B_{\bar{\bz}}(\rho_1 \sigma) \cap \M $
with $\rho_1 \leq 0.2$, $P$ maps $ V_{\rho_1}$ to the line $\ell$ in the one-to-one manner. 
Let $y^k = - (P(\bz) - \bar{\bz}) \cdot \bfn$. 
Think of $\p \M$ as a submanifold. It is isometrically embedded in $\R^d$ as is $\M$. 
As $|\bar{\bz} - \bx| \leq 2|\bx - \bz| \leq 2\rho \sigma$, 
we apply Lemma~\ref{lem:dist_interior} by replacing $\M$ with $\p\M$ and obtain the map $\Psi$ that maps $\bar{\bz}$ onto $T_\bx\p \M$. 
Define $\tilde{\Psi}(\bz) = (\Psi(\bar{\bz}), y^k)$. Since 
both $P|_{V_{\rho_1}}$ and $\Psi$ are one-to-one, so is $\tilde{\Psi}$.  
Then $\tilde{\Phi} = \tilde{\Psi}^{-1}: \tilde{\Psi}(U_\rho) \rightarrow U_\rho$ is a parametrization of $U_\rho$. 
See Figure~\ref{fig:param_boundary}. 
We have the following lemma which bounds the distortion of this parametrization $\tilde{\Phi}$. 

\begin{lemma}
For any point $(\by, y^k) \in \tilde{\Psi}(U_\rho)$ with $\rho \leq 0.1$ and 
any tangent vector $Y$ at $(\by, y^k)$,   
$$ (1-2\rho)|Y| \leq | D_Y\tilde{\Phi}(\by, y^k) | \leq (1+2\rho)|Y|.$$
\label{lem:dist_boundary}
\end{lemma}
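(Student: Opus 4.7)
The plan is to factor the parametrization $\tilde{\Phi} = G \circ H$, where $H: T_\bx \p\M \times \R \to \p\M \times \R$ sends $(\by, y^k) \mapsto (\Psi^{-1}(\by), y^k)$ with $\Psi$ the parametrization of $\p\M$ at $\bx$ obtained by applying Lemma~\ref{lem:dist_interior} to the submanifold $\p\M$, and $G: \p\M \times \R \to \M$ sends $(\bar{\bz}, y^k)$ to the unique point $\bz \in V_{\rho_1}(\bar{\bz})$ satisfying $P_{\bar{\bz}}(\bz) = \bar{\bz} - y^k \bfn(\bar{\bz})$. Controlling $D\tilde{\Phi}$ then reduces to estimating $DH$ and $DG$ separately and composing.

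For $H$: since $\p\M$ is $C^\infty$ with reach $\geq r$ and any $\bar{\bz}$ in the image satisfies $|\bar{\bz} - \bx| \leq 2\rho r$, Lemma~\ref{lem:dist_interior} applied to $\p\M$ with parameter $2\rho \leq 0.2$ gives a Jacobian of $\Psi^{-1}$ whose singular values lie in $[1,\, 1/(1-8\rho^2)]$, while $H$ is the identity on the $y^k$-factor. Thus $DH$ only contributes $O(\rho^2)$ distortion.

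For $G$: the defining relation $P_{\bar{\bz}}(\bz) = \bar{\bz} - y^k \bfn(\bar{\bz})$ fixes the tangential component of $\bz - \bar{\bz}$ in $T_{\bar{\bz}}\M$. Since $\bz \in \M$ and $|\bz - \bar{\bz}| \leq \rho r$, the reach property of $\M$ controls the normal component quadratically, giving
\begin{equation*}
\bz \;=\; \bar{\bz} - y^k \bfn(\bar{\bz}) + \epsilon(\bar{\bz}, y^k), \qquad \epsilon \in N_{\bar{\bz}}\M,\quad |\epsilon| \leq C\rho^2 r.
\end{equation*}
Differentiating along a direction $(W, c)$ with $W \in T_{\bar{\bz}}\p\M$ and $c \in \R$ yields
\begin{equation*}
D_{(W,c)} G \;=\; W - c\,\bfn(\bar{\bz}) - y^k\, D_W \bfn + D_{(W,c)} \epsilon.
\end{equation*}
The principal piece $W - c\,\bfn$ is an orthogonal decomposition (since $\bfn \perp T_{\bar{\bz}}\p\M$) of length exactly $\sqrt{|W|^2 + c^2}$. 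Lemma~\ref{lem:2nd_form} applied to the normal field $\bfn$ along $\p\M$ yields $|D_W \bfn| \leq |W|/r$, hence $|y^k D_W \bfn| \leq \rho|W|$, and differentiating the quadratic bound on $\epsilon$ produces $|D_{(W,c)}\epsilon| = O(\rho)(|W|+|c|)$.

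The main obstacle I anticipate is to assemble all error terms into the clean multiplicative factor $(1 \pm 2\rho)$. Expressing $D_{(W,c)}G$ in the orthonormal basis adapted to the splitting $T_{\bar{\bz}}\p\M \oplus \mathrm{span}\{\bfn\} \oplus N_{\bar{\bz}}\M$ exhibits it as the isometric embedding of $(W,c)$ plus a perturbation of operator norm at most $2\rho$. Composing with $DH$, whose perturbation from the identity is of order $\rho^2$, and invoking the singular-value inequalities then yields the required bound $(1-2\rho)|Y| \leq |D_Y\tilde{\Phi}(\by, y^k)| \leq (1+2\rho)|Y|$ for $\rho \leq 0.1$.
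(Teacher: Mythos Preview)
Your decomposition $\tilde{\Phi}=G\circ H$ is exactly the paper's formula $\tilde{\Phi}(\by,y^k)=\Phi(\by)-y^k\,\bfn(\Phi(\by))-l_T(\bar{\by})N_T(\bar{\by})$ written as a composition, so the overall strategy coincides with the paper's. Two steps in your write-up are not yet justified, though.

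First, Lemma~\ref{lem:2nd_form} as stated only controls the quadratic form $\langle Y,D_YN\rangle/|Y|^2$, not the full length $|D_W\bfn|$. To get an operator-norm bound you must invoke symmetry of the shape operator, and even then $D_W\bfn$ has a component in $N_{\bar{\bz}}\M$ (coming from the second fundamental form of $\M$) in addition to the component in $T_{\bar{\bz}}\p\M$; both are $O(|W|/r)$ but the constant is not literally~$1$. The paper sidesteps this by only tracking the projection of $D_W\bfn$ onto $T_{\Phi(\by)}\M$, where the $\bfn$-component vanishes and the $T\p\M$-component is exactly what Lemma~\ref{lem:2nd_form} bounds.

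Second, and more seriously, ``differentiating the quadratic bound on $\epsilon$'' is not a valid argument: a pointwise bound $|\epsilon|\le C\rho^2 r$ does not by itself imply $|D\epsilon|=O(\rho)$. Writing $\epsilon=-l_T N_T$, the derivative splits as $-(D_Y l_T)N_T-l_T\,D_YN_T$, and the first piece is \emph{a priori} of size $|D_Y l_T|$, which could be $O(1)$ without further input. The paper handles this by projecting $D_Y\tilde{\Phi}$ onto $T_{\Phi(\by)}\M$: since $N_T\perp T_{\Phi(\by)}\M$, the troublesome term $(D_Yl_T)N_T$ is annihilated for free, and the surviving piece $l_T\,D_YN_T$ is small because $l_T\le \rho^2 r/2$ while $|P(D_YN_T)|\le |Y|/r$ via Lemma~\ref{lem:2nd_form}. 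One then recovers $|D_Y\tilde{\Phi}|$ from its projection using that $D_Y\tilde{\Phi}\in T_{\tilde{\Phi}(\by,y^k)}\M$ and the angle between the two tangent spaces is $O(\rho^2)$ (Lemma~\ref{lem:angle_bound}). You should either adopt this projection trick or actually compute $D_Yl_T=N_T\cdot D_Y\bar{\by}$ and show it is $O(\rho)|Y|$ using the tangent-space angle bound; the bare claim as written is a gap.
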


\begin{figure}
\begin{center}
\begin{tabular}{c}
\includegraphics[width=0.9\textwidth]{./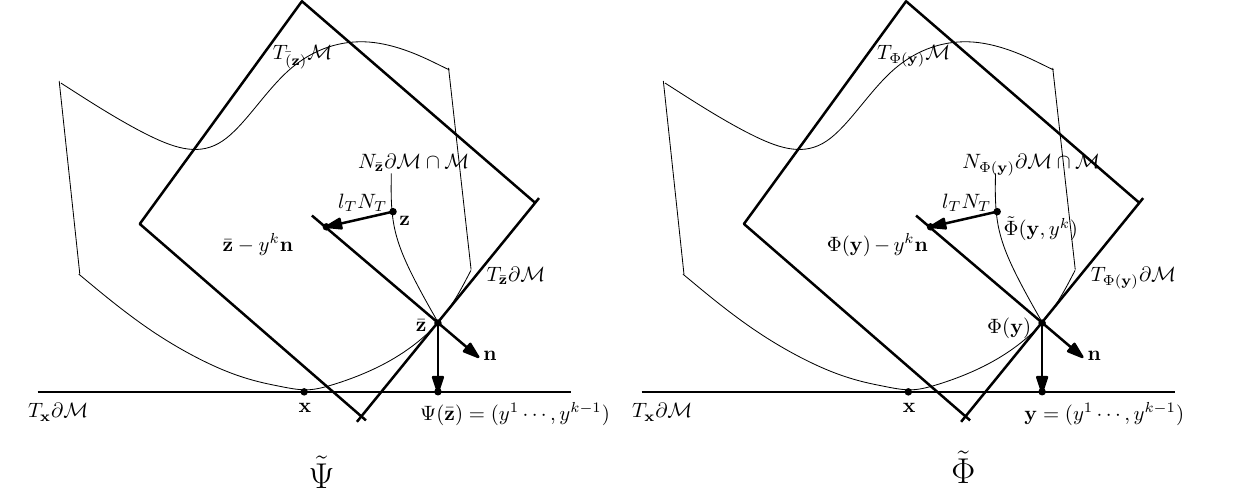}
\end{tabular}
\end{center}
\caption{Parametrization for a neighborhood of a point on $\p \M$} 
\label{fig:param_boundary}
\end{figure}

\begin{proof}
Let $\bar{\by} = \Phi(\by) - y^k \bfn$. 
We have $\tilde{\Phi}(\by, y^k) = \Phi(\by) - y^k \bfn(\Phi(\by)) - l_T(\bar{\by}) N_T(\bar{\by})$ where
$N_T(\bar{\by}) \perp T_{\Phi(\by)} \M$. See Figure~\ref{fig:param_boundary}. 
We have $$
D_Y\tilde{\Phi}(\by, y^k) = D_Y\Phi - y^k D_Y\bfn - \bfn D_Yy^k - N_TD_Yl_T - l_T D_YN_t. 
$$
Using the similar strategy of proving Lemma~\ref{lem:dist_interior}, we consider the 
projection of $D_Y\tilde{\Phi}(\by, y^k)$ to the space $T_\Phi(\by)M$ to which it is almost 
parallel. Denote $P$ this projection map . We bound $P(D_Y\tilde{\Phi}(\by, y^k))$. 
Let $Y = (Y^1, \cdots, Y^k)$, $Y_1 = (Y^1, \cdots, Y^{k-1}, 0)$ and $Y_2=(0, \cdots, 0, Y^k)$. 
We have $D_Y\tilde{\Phi}(\by, y^k) = D_{Y_1}\tilde{\Phi}(\by, y^k) + D_{Y_2}\tilde{\Phi}(\by, y^k)$. 
First consider each term involved in $D_{Y_1}\tilde{\Phi}(\by, y^k)$. 
\begin{enumerate}
\item[(i)] $D_{Y_1}\Phi(\by)$ is a vector in $T_{\Phi(\by)}\p \M$, thus $P(D_{Y_1}\Phi(\by)) = D_{Y_1}\Phi(\by)$. 
In addition, from Lemma~\ref{lem:dist_interior},  $|Y_1|\leq \left|D_{Y_1}\Phi(\by)\right| \leq \frac{1}{1-2\rho^2}|Y_1|$. 

\item[(ii)] $D_{Y_1}\bfn(\by, y^k) = D_{D_{Y_1}\Phi} \bfn(\Phi(\by))$.  First note that 
$\bfn \cdot D_{D_{Y_1}\Phi} \bfn = 0$. Second, from Lemma~\ref{lem:2nd_form}, 
we have that the projection of $D_{D_{Y_1}\Phi} \bfn$ to the space $T_{\Phi(\by)}\p\M$
is upper bounded by $\frac{1}{\sigma}\left|D_{Y_1}\Phi\right|$. Since $|y^k| <\rho \sigma$, 
$|P( y^k D_{Y_1}\bfn)| \leq \frac{\rho}{1-2\rho^2}|Y_1|$. 

\item[(iii)] Consider $D_{Y_1}N_T(\by, y^k)$. We have $N_T \perp T_{\Phi(\by)}\M$. 
Let $e_1, \cdots, e_k$ be the orthonormal basis of $T_{\Phi(\by)}\M$ so that
$D_{e_i} N_T \cdot e_j = 0$ for $i\neq j$. Locally extend $e_1, \cdots, e_k$ to be
an orthonormal basis of $T\M$ in a neighborhood of $\Phi(\by)$. We have for any $e_i$
\begin{eqnarray*}
\left|D_{Y_1}N_T(\by, y^k) \cdot e_i \right| &=& \left| D_{D_{Y_1}\Phi} N_T(\bar{\by})\cdot e_i\right| \\
&=& \left| D_{\left(D_{Y_1}\Phi\cdot e_i\right) e_i} N_T(\bar{\by})\cdot e_i\right|\\
&=& \left| D_{\left(D_{Y_1}\Phi\cdot e_i\right) e_i} N_T(\bar{\by})\cdot e_i\right|\\
&\leq& \frac{1}{\sigma}|D_{Y_1}\Phi\cdot e_i|
\end{eqnarray*}
where the last inequality is due to Lemma~\ref{lem:2nd_form}. Moreover, one can verify that
$l_T(\bar{\by}) \leq \frac{\rho^2\sigma}{2}$, which leads to 
$$\left|P(l_T D_{Y_1}N_t)\right| \leq \frac{\rho^2}{2}|D_{Y_1}\Phi| \leq \frac{\rho^2}{2(1-2\rho^2)}|Y_1|$$
\item[(iv)] It is obvious that $\bfn D_{Y_1}y^k=P(N_T D_{Y_1}l_T) = 0$. 
\end{enumerate}

Next consider each term involved in $D_{Y_2}\tilde{\Phi}(\by, y^k)$.
\begin{enumerate}
\item[(i)] $\bfn D_{Y_2}y^k= Y^k \bfn$, which lies on $T_{\Phi(\by)} \M$. Moreover 
$\bfn \perp D_{Y_1}\Phi(\by)$. 
\item[(ii)] As $N_T(\by, y^k)$ remains perpendicular to $T_{\Phi(\by)}\M$ if we only vary $y^k$, we have 
$$P(D_{Y_2}N_t(\by, y^k)) = 0.$$ 
\item[(iii)] For the remaining terms, we have $D_{Y_2}\Phi(\by) = y^kD_{Y_2}\bfn = P(N_T D_{Y_2}l_T) = 0$. 
\end{enumerate}

On the other hand, we hand $D_Y\tilde{\Phi}(\by, y^k)$ lie in the tangent space $T_{\tilde{\Phi}(\by, y^k)}\M$, 
and $$ \cos \angle T_{\tilde{\Phi}(\by, y^k)}\M, T_{\Phi(\by)}\M \leq 1-2\rho^2.$$ 
Putting everything together, we have 
$$
|Y| - \frac{\rho^2 + 2\rho}{2(1-2\rho^2)}|Y_1| \leq D_Y\tilde{\Phi}(\by, y^k) \leq \frac{1}{(1-2\rho^2)^2}|Y| + \frac{\rho^2 + 2\rho}{2(1-2\rho^2)^2}|Y_1|. 
$$
This proves the lemma. 
\end{proof}

Now we are ready to prove Proposition~\ref{prop:local_param}
\begin{proof} of Proposition~\ref{prop:local_param} \\
First consider the case where $d(\bx, \p \M) > \frac{\rho}{2}\sigma$. 
Set $U' = B_\bx(\frac{\rho}{2} \sigma) \cap \M$,  
and parametrize $U'$ using map $\Phi: \Psi(U') \rightarrow U'$.  
Since for any $\by \in \p U'$, $|\bx-\by| = \frac{\rho}{2} \sigma$, 
from Lemma~\ref{lem:dist_interior}, we have that 
$B_{\Phi^{-1}(\bx)}(\frac{\rho}{2(1+\rho)}\sigma )$ is contained in $\Psi(U')$. 
Set $\Omega = B_{\Phi^{-1}(\bx)}(\frac{\rho\sigma}{2(1+\rho)})$ and $U = \Phi(\Omega)$.
This shows the parametrization $\Phi: \Omega \rightarrow U$ satisfies the condition (i). 
By Lemma~\ref{lem:dist_interior} and Lemma~\ref{lem:geod_bound},
it is easy to verify that $\Phi$ satisfies the other three conditions. 

Next consider the case where $d(\bx, \p \M) \leq \frac{\rho}{2}\sigma$. 
Let $\bar{\bx}$ be the closest point on $\p \M$ to $\bx$. 
Set $U' = B_{\bar{\bx}}(\rho \sigma)\cap \M$
and parametrize $U'$ using map $\tilde{\Phi}: \tilde{\Psi}(U')\rightarrow U'$. 
By Lemma~\ref{lem:dist_boundary},  $\tilde{\Psi}(U')$ contains 
half of the ball $B_{\tilde{\Phi}^{-1}(\bar{\bx})}(\frac{\rho\sigma}{1+2\rho})$. 
Let $\Omega$ be that half ball and $U = \tilde{\Phi}(\Omega)$. 
It is easy to verify
that the parametrization $\tilde{\Phi}: \Omega \rightarrow U$ satisfies the condition
(iii) and (iv). To see (i), note that $|\bx - \bar{\bx}| \leq \frac{\rho}{2}\sigma$. 
From Lemma~\ref{lem:dist_boundary} and Lemma~\ref{lem:geod_bound}, 
$\left|\tilde{\Psi}(\bx) - \tilde{\Psi}(\bar{\bx})\right| \leq (1+2\rho)(1+2\rho^2)|\bx - \bar{\bx}|$. 
We have that $\Omega$ contains at least 
half of the ball centered at ${\Phi}^{-1}(\bx)$ with radius 
$(\frac{\rho}{1+2\rho} - \frac{\rho(1+2\rho)(1+2\rho^2)}{2})\sigma \ge \frac{\rho}{5}\sigma$. 
This shows that $\tilde{\Phi}$ satisfies the condition (i). Similarly, the condition (ii) follows from (i)
as $\tilde{\Phi}$ has bounded distortion (Lemma~\ref{lem:dist_boundary}) and 
geodesic distance  is bounded by Euclidean distance (Lemma~\ref{lem:geod_bound}).  
\end{proof}

\section{Proof of Lemma \ref{lem:elliptic_v}}
\begin{proof}
We start with the evaluation of the $x^i$ component of $\nabla v$. 
\begin{eqnarray*}
    \nabla^i v(\bx)
&=&\frac{C_t^2}{2t w_t^2(\bx)}\int_\mathcal{M}\int_{\mathcal{M}}\nabla^i x^j(x^j-y^j)R'\left(\frac{|\bx-\by|^2}{4t}\right)
R\left(\frac{|\bx-\by'|^2}{4t}\right)
 u(\by)\mathd \mu_\by'\mathd \mu_\by\nonumber\\
&&-\frac{C_t^2}{2t w_t^2(\bx)}\int_\mathcal{M}\int_{\mathcal{M}}\nabla^i x^j(x^j-y'^j)R'\left(\frac{|\bx-\by'|^2}{4t}\right)
R\left(\frac{|\bx-\by|^2}{4t}\right)
 u(\by)\mathd \mu_\by'\mathd \mu_\by\nonumber\\
&=&\frac{C_t^2}{4t w_t^2(\bx)}\int_\mathcal{M}\int_{\mathcal{M}}K^i(\bx,\by,\by';t)
 (u(\by)-u(\by'))\mathd \mu_\by'\mathd \mu_\by
  \end{eqnarray*}
where we set 
\begin{eqnarray*}
    K^i(\bx,\by,\by';t)=\nabla^i x^j(x^j-y^j)R'\left(\frac{|\bx-\by|^2}{4t}\right)
R\left(\frac{|\bx-\by'|^2}{4t}\right) \nonumber \\
-\nabla^i x^j(x^j-y'^j)R'\left(\frac{|\bx-\by'|^2}{4t}\right)
R\left(\frac{|\bx-\by|^2}{4t}\right). 
\end{eqnarray*}
Think of $\nabla^i x^j$ as the $i,j$ entry of the matrix $[\nabla^i x^j]$ and 
we have
\begin{eqnarray*}
\nabla^i x^j\nabla^l x^i
&=&(\p_{i'}x^i)g^{i'j'}(\p_{j'}x^j)(\p_{s'}x^l)g^{s't'}(\p_{t'}x^i)\nonumber\\
&=&g_{i't'}g^{i'j'}(\p_{j'}x^j)g^{s't'}(\p_{s'}x^l)\nonumber\\
&=&\delta_{j't'}(\p_{j'}x^j)g^{s't'}(\p_{s'}x^l)\nonumber\\
&=&(\p_{j'}x^j)g^{s'j'}(\p_{s'}x^l)\nonumber\\
&=&\nabla^l x^j.
\end{eqnarray*}
This shows that the matrix $[\nabla^i x^j]$ is idempotent. 
At the same time,  $[\nabla^i x^j]$ is symmetric, which implies
that the eigenvalues of $\nabla \bx$ are either $1$ or $0$. 
Then we have the following upper bounds. There exists a constant $C$ depending 
only on the maximum of $R$ and $R'$ so that
\begin{eqnarray*}
&&\sum_{i=1}^d K^i(\bx,\by,\by';t)^2 \nonumber \\ 
&\le& 2\left(R'\left(\frac{|\bx-\by|^2}{4t}\right) R\left(\frac{|\bx-\by'|^2}{4t}\right)\right)^2 \|[\nabla^i x^j](\bx-\by)\|^2 \nonumber \\ 
	&+& 2\left(R'\left(\frac{|\bx-\by'|^2}{4t}\right) R\left(\frac{|\bx-\by|^2}{4t}\right)\right)^2 \|[\nabla^i x^j](\bx-\by')\|^2 \nonumber \\
&\le&	 CR'\left(\frac{|\bx-\by|^2}{4t}\right) R\left(\frac{|\bx-\by'|^2}{4t}\right) \|\bx-\by\|^2 \nonumber \\ 
	&+& CR'\left(\frac{|\bx-\by'|^2}{4t}\right) R\left(\frac{|\bx-\by|^2}{4t}\right) \|\bx-\by'\|^2
\end{eqnarray*}
There exists a constant $C$ independent of $t$ so that
\begin{eqnarray*}
&&C_t^2\int_\M \int_\M \frac{\sum_{i=1}^d K^i(\bx,\by,\by';t)^2}{t} \mathd \mu_\by \mathd \mu_\by' \nonumber \\ 
&\le&	 C\int_\M C_t R'\left(\frac{|\bx-\by|^2}{4t}\right) \frac{\|\bx-\by\|^2}{t}\mathd \mu_\by \int_\M C_t R\left(\frac{|\bx-\by'|^2}{4t}\right)  \mathd \mu_\by' \nonumber \\ 
	&+& C\int_\M C_t R'\left(\frac{|\bx-\by'|^2}{4t}\right) \frac{\|\bx-\by'\|^2}{t} \mathd \mu_\by'\int_\M C_t R\left(\frac{|\bx-\by|^2}{4t}\right) \mathd \mu_\by \nonumber \\
&\le& C
\end{eqnarray*}

Since $R$ has a compact support, only when $|\by -\by'|^2 < 16t$ and $|\bx- \frac{\by +\by'}{2}|^2 < 4t$
is $K^i(\bx, \by, \by'; t) \neq 0$. Thus from the assumption on $R$, we have
$$K^i(\bx, \by, \by'; t)^2 \leq \frac{1}{\delta_0^2} K^i(\bx, \by, \by'; t)^2 R\left(\frac{|\by-\by'|^2}{32t}\right) R\left(\frac{|\bx-\frac{\by+\by'}{2}|^2}{8t}\right).$$
  

We can upper bound the norm of $\nabla v$ as follows: 
\begin{eqnarray}
&&|\nabla v(\bx)|^2=\frac{C_t^4}{16t^2 w_t^4(\bx)}\sum_{i=1}^d
\left(\int_{\mathcal{M}}\int_{\mathcal{M}}K^i(\bx,\by,\by';t)
 (u(\by)-u(\by'))\mathd \by'\mathd \by\right)^2\nonumber\\
&\le & \frac{C_t^4}{16t^2 w_t^4(\bx)}\sum_{i=1}^k
\int_{\mathcal{M}}\int_{\mathcal{M}}K_i^2(\bx,\by,\by';t)\left(R\left(\frac{|\by-\by'|^2}{32t}\right)
R\left(\frac{|\bx-\frac{\by+\by'}{2}|^2}{8t}\right)\right)^{-1}\mathd \mu_\by'\mathd \mu_\by\nonumber\\
&&\int_{\mathcal{M}}\int_{\mathcal{M}}
R\left(\frac{|\bx-\frac{\by+\by'}{2}|^2}{8t}\right) R\left(\frac{|\by-\by'|^2}{32t}\right)
(u(\by)-u(\by'))^2\mathd \mu_\by'\mathd \mu_\by\nonumber\\
&=&\frac{C_t^4}{16t \delta_0^2w_t^4(\bx)}
\int_{\mathcal{M}}\int_{\mathcal{M}}\frac{\sum_{i=1}^d K^i(\bx,\by,\by';t)^2}{t}\mathd \mu_\by'\mathd \mu_\by\nonumber\\
&&\int_{\mathcal{M}}\int_{\mathcal{M}}R\left(\frac{|\bx-\frac{\by+\by'}{2}|^2}{8t}\right) R\left(\frac{|\by-\by'|^2}{32t}\right)
 (u(\by)-u(\by'))^2\mathd \mu_\by'\mathd \mu_\by\nonumber\\
&\le &\frac{C C_t^2}{t}
\int_{\mathcal{M}}\int_{\mathcal{M}}R\left(\frac{|\bx-\frac{\by+\by'}{2}|^2}{8t}\right) R\left(\frac{|\by-\by'|^2}{32t}\right)
 (u(\by)-u(\by'))^2\mathd \mu_\by'\mathd \mu_\by.\nonumber
\end{eqnarray}
Finally, we have
\begin{eqnarray}
  &&\int_{\mathcal{M}}|\nabla v(\bx)|^2\mathd \mu_\bx\nonumber\\
&\le& \frac{C C_t^2}{t}\int_{\mathcal{M}}\left(
\int_\mathcal{M}\int_{\mathcal{M}} R\left(\frac{|\bx-\frac{\by+\by'}{2}|^2}{8t}\right) R\left(\frac{|\by-\by'|^2}{32t}\right)
(u(\by)-u(\by'))^2\mathd \mu_\by'\mathd \mu_\by\right)\mathd \mu_\bx
\nonumber\\
&=&\frac{C C_t^2}{t}
\int_\mathcal{M}\int_{\mathcal{M}}\left(\int_{\mathcal{M}}R\left(\frac{|\bx-\frac{\by+\by'}{2}|^2}{8t}\right)\mathd \mu_\bx\right) 
R\left(\frac{|\by-\by'|^2}{32t}\right)(u(\by)-u(\by'))^2\mathd \mu_\by'\mathd \mu_\by\nonumber\\
&\le & \frac{C C_t}{t} \int_\mathcal{M}\int_{\mathcal{M}} R\left(\frac{|\by-\by'|^2}{32t}\right)(u(\by)-u(\by'))^2\mathd \mu_\by'\mathd \mu_\by. 
\nonumber
\end{eqnarray}
This proves the Lemma. 
\end{proof}


\section{Proof of Lemma \ref{lem:bigt2smallt}}
Based on the partition and the parametrization of the manifold $\M$ introduced in Section \ref{sec:err_int}, we have
  \begin{eqnarray}
\label{eq:decomp_local}
    && \int_\mathcal{M}\int_{\mathcal{M}}R\left(\frac{|\bx-\by|^2}{32t}\right) (u(\bx)-u(\by))^2\mathd \mu_\bx \mathd \mu_\by\\
&= & \sum_{i=1}^N\int_{\mathcal{M}}\int_{\mathcal{O}_i}R\left(\frac{|\bx-\by|^2}{32t}\right) (u(\bx)-u(\by))^2\mathd \mu_\bx \mathd \mu_\by\nonumber\\
&= & \sum_{i=1}^N\int_{B_{\bq_i}^{2\delta}}\int_{\mathcal{O}_i}R\left(\frac{|\bx-\by|^2}{32t}\right) (u(\bx)-u(\by))^2\mathd \mu_\bx \mathd \mu_\by.\nonumber
  \end{eqnarray}

For any $\bx\in \mathcal{O}_i$ and $\by\in B_{\bq_i}^{2\delta}$,
let 
\begin{eqnarray}
 \B{z}_j=\Phi_i\left(\left(\frac{j}{16}\right)\Phi_i^{-1}(\bx)+\left(1-\frac{j}{16}\right)\Phi_i^{-1}(\by)\right),\quad j=0,\cdots,16. 
\end{eqnarray}
Apparently, $\B{z}_0=\bx,\;\B{z}_{16}=\by$. Since $\Omega_i$ is convex, we have $\Phi_i^{-1}(\B{z}_j)\in \Omega_i,\;i=0,\cdots,16$. 
Then utilizing {\it locally small deformation} property of the parametrization, we obtain
\begin{eqnarray*}
  \|\B{z}_j-\B{z}_{j+1}\|&\le& 2   \|\Phi^{-1}(\B{z}_j)-\Phi^{-1}(\B{z}_{j+1})\|\nonumber\\
&\le & \frac{1}{8}   \|\Phi^{-1}(\B{x})-\Phi^{-1}(\B{y})\|\nonumber\\
&\le & \frac{1}{4}\|\bx-\by\|.
\end{eqnarray*}
Now, we are ready to estimate the integrals in  \eqref{eq:decomp_local}.
\begin{eqnarray*}
&&    \int_{B_{\bq_i}^{2\delta}}\int_{\mathcal{O}_i}R\left(\frac{|\bx-\by|^2}{32t}\right) (u(\bx)-u(\by))^2\mathd \mu_\bx \mathd\mu_\by\nonumber\\
&\le& 16\sum_{j=0}^{15}\int_{B_{\bq_i}^{2\delta}}\int_{\mathcal{O}_i}R\left(\frac{|\bx-\by|^2}{32t}\right) (u(\B{z}_j)-u(\B{z}_{j+1}))^2\mathd \mu_\bx \mathd \mu_\by\nonumber\\
&=&  16\sum_{j=0}^{15}\int_{\mathcal{O}_i}\left[\int_{\mathcal{M}_{\bx}^t}R\left(\frac{|\bx-\by|^2}{32t}\right) (u(\B{z}_j)-u(\B{z}_{j+1}))^2\mathd \mu_\by\right] \mathd \mu_\bx.
\end{eqnarray*}
For any $\by\in \mathcal{M}_\bx^t$,
\begin{eqnarray}
  \|\B{z}_j-\B{z}_{j+1}\|^2\le \frac{1}{16}\|\bx-\by\|^2\le 2t,\quad j=0,\cdots,15,
\end{eqnarray}
which implies that 
\begin{eqnarray}
  R\left(\frac{|\B{z}_j-\B{z}_{j+1}|^2}{4t}\right)\ge \delta_0,\quad j=0,\cdots,15.
\end{eqnarray}
Now, we have
  \begin{eqnarray}
 &&   \int_{\mathcal{O}_i}\left[\int_{\mathcal{M}_{\bx}^t}R\left(\frac{|\bx-\by|^2}{32t}\right) (u(\B{z}_j)-u(\B{z}_{j+1}))^2\mathd \mu_\by\right] \mathd \mu_\bx\nonumber\\
&=& \int_{\mathcal{O}_i}\left[\int_{\mathcal{M}_{\bx}^t}R\left(\frac{|\bx-\by|^2}{32t}\right)\left(R\left(\frac{|\B{z}_j-\B{z}_{j+1}|^2}{4t}\right)\right)^{-1}
R\left(\frac{|\B{z}_j-\B{z}_{j+1}|^2}{4t}\right) (u(\B{z}_j)-u(\B{z}_{j+1}))^2\mathd \mu_\by \right]\mathd \mu_\bx\nonumber\\
&\le & \frac{1}{\delta_0}  \int_{\mathcal{O}_i}\left[\int_{\mathcal{M}_{\bx}^t}
R\left(\frac{|\B{z}_j-\B{z}_{j+1}|^2}{4t}\right) (u(\B{z}_j)-u(\B{z}_{j+1}))^2\mathd \mu_\by \right]\mathd \mu_\bx\nonumber\\
&=& \frac{1}{\delta_0}  \int_{\Phi_i^{-1}(\mathcal{O}_i)}\left[\int_{\Phi_i^{-1}(\mathcal{M}_{\bx}^t)}
R\left(\frac{|\B{z}_j-\B{z}_{j+1}|^2}{4t}\right) (u(\B{z}_j)-u(\B{z}_{j+1}))^2 \left|\nabla \Phi(\theta_\by)\right|\mathd \theta_{\by}\right]\left|\nabla \Phi(\theta_\bx)\right|
 \mathd \theta_{\bx}\nonumber\\
&\le& \frac{4}{\delta_0}  \int_{\Phi_i^{-1}(\mathcal{O}_i)}\left[\int_{\Phi_i^{-1}(\mathcal{M}_{\bx}^t)}
R\left(\frac{|\B{z}_j-\B{z}_{j+1}|^2}{4t}\right) (u(\B{z}_j)-u(\B{z}_{j+1}))^2 \mathd \theta_{\by}\right] \mathd \theta_{\bx}\nonumber
  \end{eqnarray}
where $\theta_\bx=\Phi_i^{-1}(\bx),\; \theta_\by=\Phi_i^{-1}(\by)$.

Let 
\begin{eqnarray}
\theta_{\B{z}_j}=\Phi_i^{-1}(\B{z}_j)=\frac{j}{16}\theta_{\bx}+\left(1-\frac{j}{16}\right)\theta_{\by}, \quad j=0,\cdots,16.  
\end{eqnarray}
It is easy to show that $\Phi_i(\theta_{\B{z}_j})=\B{z}_j \in B_{\bq_i}^{2\delta},\; j=0,\cdots,16$ by using the facts that for any $\by\in \mathcal{M}_\bx^t$
  \begin{eqnarray}
    \|\B{z}_j-\bx\|\le \sum_{l=1}^{j}\|\B{z}_l-\B{z}_{l-1}\|\le \frac{j}{4} \|\bx-\by\|\le 15\sqrt{2t},\quad j=1,\cdots,15,
  \end{eqnarray}
and $\bx\in B_{\bq_i}^{\delta}$ and $15\sqrt{2t}\le r$. Then we have
\begin{eqnarray}
  \theta_{\B{z}_j}\in \Phi_i^{-1}\left(B_{\bq_i}^{2\delta}\right),\quad j=0,\cdots,16.
\end{eqnarray}
By changing variable, we obtain
\begin{eqnarray*}
&&\int_{\Phi_i^{-1}(\mathcal{O}_i)}\left[\int_{\Phi_i^{-1}(\mathcal{M}_{\bx}^t)}
R\left(\frac{|\B{z}_j-\B{z}_{j+1}|^2}{4t}\right) (u(\B{z}_j)-u(\B{z}_{j+1}))^2 \mathd \theta_{\by}\right] \mathd \theta_{\bx}\nonumber\\
&\le & 8^k  \int_{\Phi_i^{-1}(B_{\bq_i}^{2\delta})}\int_{\Phi_i^{-1}\left(B_{\bq_i}^{2\delta}\right)}
R\left(\frac{|\B{z}_j-\B{z}_{j+1}|^2}{4t}\right) (u(\B{z}_j)-u(\B{z}_{j+1}))^2 \mathd \theta_{\B{z}_j} \mathd \theta_{\B{z}_{j+1}}\nonumber\\
&\le & 4\cdot8^k  \int_{\Phi_i^{-1}(B_{\bq_i}^{2\delta})}\int_{\Phi_i^{-1}\left(B_{\bq_i}^{2\delta}\right)}
R\left(\frac{|\B{z}_j-\B{z}_{j+1}|^2}{4t}\right) (u(\B{z}_j)-u(\B{z}_{j+1}))^2\left|\nabla \Phi(\theta_{\B{z}_j})\right|\left|\nabla \Phi(\theta_{\B{z}_{j+1}})\right|
 \mathd \theta_{\B{z}_j} \mathd \theta_{\B{z}_{j+1}}\nonumber\\
&= & 4\cdot8^k  \int_{B_{\bq_i}^{2\delta}}\int_{B_{\bq_i}^{2\delta}}
R\left(\frac{|\B{z}_j-\B{z}_{j+1}|^2}{4t}\right) (u(\B{z}_j)-u(\B{z}_{j+1}))^2
 \mathd \mu_{\B{z}_j} \mathd \mu_{\B{z}_{j+1}}\nonumber\\
&\le & C\int_{\mathcal{M}}\int_{B_{\bq_i}^{2\delta}}
R\left(\frac{|\bx-\by|^2}{4t}\right) (u(\bx)-u(\by))^2
 \mathd \mu_\bx \mathd \mu_\by\nonumber
\end{eqnarray*}

Finally, we can prove the lemma as follows.
\begin{eqnarray*}
    && \int_\mathcal{M}\int_{\mathcal{M}}R\left(\frac{|\bx-\by|^2}{32t}\right) (u(\bx)-u(\by))^2\mathd \mu_\bx \mathd \mu_\by\nonumber\\
&\le & C\sum_{i=1}^N\int_{\mathcal{M}}\int_{B_{\bq_i}^{2\delta}}R\left(\frac{|\bx-\by|^2}{4t}\right) (u(\bx)-u(\by))^2\mathd \mu_\bx \mathd \mu_\by\nonumber\\
&\le& CN \int_{\mathcal{M}}\int_{\mathcal{M}}R\left(\frac{|\bx-\by|^2}{4t}\right) (u(\bx)-u(\by))^2\mathd \mu_\bx \mathd \mu_\by.
  \end{eqnarray*}

\section{Proof of Theorem \ref{thm:elliptic_L}}

First, we introduce a smooth function $u_t$ that approximates $\bfu$ at the samples $P$. 
\begin{eqnarray}
  u_t(\bx)=\frac{C_t}{w_{t,h}(\bx)}\sum_{i=1}^nR\left(\frac{|\bx-\bfp_i|^2}{4t}\right)u_iV_i,
\label{eqn:def_dis_u} 
\end{eqnarray}
where $w_{t,h}(\bx)=C_t\sum_{i=1}^nR\left(\frac{|\bx-\bfp_i|^2}{4t}\right)V_i$.
We have the following lemma about the function $w_{t, h}$. 
\begin{lemma}
Assume the submanifold $\M$ and $\p \M$ are $C^2$ smooth and $t$, $\frac{h(P,\bV,\M)}{t^{1/2}}$ are sufficiently small. There exists a 
constant $C_1, C_2$ and $C$, 
so that $$C_1\leq w_{t, h}(\bx) \leq C_2, \text{~and~} |\nabla w_{t, h}(\bx)| \leq \frac{C}{t^{1/2}} $$  
\label{lem:bound_w_t_h}
\end{lemma}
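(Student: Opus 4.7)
The plan is to compare $w_{t,h}(\bx)$ to its continuous counterpart $w_t(\bx) = C_t \int_\M R(|\bx-\by|^2/(4t))\,\mathd\mu_\by$ and to transfer the bounds from Corollary~\ref{cor:bound_int_R_t} and Corollary~\ref{cor:upper_bound_convolution} to $w_{t,h}$ using Assumption~1(a), the $h$-integrable approximation property of $(P,\bV)$.

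For the two-sided bound, I would fix $\bx\in\M$ and set $f_\bx(\by) = C_t R(|\bx-\by|^2/(4t))$ so that $w_{t,h}(\bx) = \sum_{i=1}^n f_\bx(\bfp_i)V_i$. Because $R$ is supported in $[0,1]$, the function $f_\bx$ is supported in $B_\bx(2\sqrt t)\cap\M$, so $|\text{supp}(f_\bx)|\le C t^{k/2}$. A direct differentiation gives $\|f_\bx\|_\infty \le C t^{-k/2}$ and, on its support, $\|\nabla_\by f_\bx\|_\infty \le C t^{-k/2-1/2}$, since the derivative produces a factor $|\bx-\by|/(2t)\le 1/\sqrt t$. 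Hence $|\text{supp}(f_\bx)|\,\|f_\bx\|_{C^1(\M)}\le C/\sqrt t$, and Assumption~1(a) yields $|w_{t,h}(\bx)-w_t(\bx)|\le Ch/\sqrt t$. Combined with Corollary~\ref{cor:bound_int_R_t} which gives $w_{\min}\le w_t(\bx)\le w_{\max}$, choosing $h/\sqrt t$ small enough concludes $C_1\le w_{t,h}(\bx)\le C_2$.

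For the gradient estimate, I would work coordinatewise in the ambient space and then use that the manifold gradient is the projection of the ambient gradient, which does not increase its norm. For each $j$, $\p_{x^j} w_{t,h}(\bx) = \sum_i g_{j,\bx}(\bfp_i) V_i$ with $g_{j,\bx}(\by) = -\tfrac{C_t}{2t} R'(|\bx-\by|^2/(4t))(x^j - y^j)$. The same singular-kernel accounting gives $\|g_{j,\bx}\|_\infty \le Ct^{-k/2-1/2}$ and $\|\nabla_\by g_{j,\bx}\|_\infty \le Ct^{-k/2-1}$ on its support of volume at most $Ct^{k/2}$, so Assumption~1(a) produces $|\p_{x^j} w_{t,h}(\bx) - \p_{x^j} w_t(\bx)| \le Ch/t$. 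Since Corollary~\ref{cor:upper_bound_convolution} applied with $f\equiv 1$ yields $|\nabla w_t(\bx)|\le C/\sqrt t$, one obtains $|\nabla w_{t,h}(\bx)|\le C/\sqrt t + Ch/t \le C'/\sqrt t$ provided $h/\sqrt t$ is bounded.

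The only subtle accounting in the proof is tracking how the singular normalization $C_t = (4\pi t)^{-k/2}$ and the $1/t$ blow-up arising in the derivatives of $R(|\bx-\by|^2/(4t))$ interact with the small effective support $Ct^{k/2}$ of the kernel: the factor $|\text{supp}(f)|$ in Assumption~1(a) cancels exactly the $C_t$ singularity, producing the error rates $h/\sqrt t$ and $h/t$, which match the critical smallness scales used throughout the paper. Everything else is a routine combination with the continuous estimates already established in Section~6.
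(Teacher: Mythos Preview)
Your proposal is correct. The two-sided bound on $w_{t,h}$ is handled exactly as in the paper: compare to $w_t(\bx)$ via the $h$-integrable approximation assumption to get $|w_{t,h}(\bx)-w_t(\bx)|\le Ch/\sqrt t$, then invoke Corollary~\ref{cor:bound_int_R_t}.

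For the gradient bound your route differs slightly from the paper's. You again pass through the continuous object, bounding $|\nabla w_{t,h}-\nabla w_t|$ by $Ch/t$ via Assumption~1(a) and then citing Corollary~\ref{cor:upper_bound_convolution} for $|\nabla w_t|\le C/\sqrt t$. The paper instead works directly on the discrete sum: writing $|\nabla w_{t,h}(\bx)|^2 \le \sum_{i=1}^d\bigl(\sum_j C_t R'(\cdot)\tfrac{x^i-p_j^i}{2t}V_j\bigr)^2$ and applying Cauchy--Schwarz to split this into $\bigl(\sum_j C_t|R'|\tfrac{|\bx-\bfp_j|^2}{4t^2}V_j\bigr)\bigl(\sum_j C_t|R'|V_j\bigr)\le C/t$, where the boundedness of these discrete sums follows from the same comparison argument as in the first part. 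Your version is perfectly valid and arguably more systematic; the paper's is marginally more self-contained since it avoids a separate appeal to the continuous gradient estimate.
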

\begin{proof}
Using the definition of $h(P,\bV,\M)$, 
$$\left|w_{t, h}(\bx) - C_t\int_\M R\left(\frac{|\bx-\by|^2}{4t}\right) \mathd \mu_\by\right| \leq \frac{Ch(P,\bV,\M)}{t^{1/2}}, $$ 
which shows the bounds on $w_{t, h}(\bx)$. Next, we show the bound on the gradient. 
\begin{eqnarray*}
|\nabla w_{t, h}(\bx)|^2 &\leq& \sum_{i=1}^d \left(\frac{\p w_{t, h}}{\p x^i}\right)^2 = \sum_{i=1}^d \left( \sum_{j=1}^n C_t R'\left(\frac{|\bx-\bfp_j|^2}{4t}\right) \frac{x^i - p_j^i}{2t} V_j \right)^2 \nonumber\\
&\leq& \left( \sum_{j=1}^n C_t R'\left(\frac{|\bx-\bfp_j|^2}{4t}\right) \frac{|\bx - \bfp_j|^2}{4t^2} V_j  \right) \left( \sum_{j=1}^n C_t R'\left(\frac{|\bx-\bfp_j|^2}{4t}\right) V_j \right)\nonumber \\
&\leq& \frac{C}{t}. 
\end{eqnarray*}
\end{proof}
Now we are ready to give the proof of Theorem \ref{thm:elliptic_L}.
\begin{proof} 

In the definition of $u_t$ and $w_{t, h}$ in \eqref{eqn:def_dis_u}, replace $t$ with $t'=t/18$. We have 
\begin{eqnarray*}
&&\int_{\mathcal{M}}  \int_{\mathcal{M}}R_{t'}(\bx,\by) \left(u_t(\bx)-u_t(\by)\right)^2\mathd \mu_\bx \mathd\mu_\by\nonumber\\
&=& \int_{\mathcal{M}}  \int_{\mathcal{M}}R_{t'}(\bx,\by) \left(\frac{1}{w_{t',h}(\bx)}\sum_{i=1}^nR_{t'}(\bx,\bfp_i)u_iV_i
-\frac{1}{w_{t',h}(\by)}\sum_{j=1}^nR_{t'}(\bfp_j,\by)u_jV_j\right)^2\mathd \mu_\bx \mathd\mu_\by\nonumber\\
&=& \int_{\mathcal{M}}  \int_{\mathcal{M}}R_{t'}(\bx,\by) \left(\frac{1}{w_{t',h}(\bx)w_{t',h}(\by)}\sum_{i,j=1}^nR_{t'}(\bx,\bfp_i)R_{t'}(\bfp_j,\by)V_i
V_j(u_i-u_j)\right)^2\mathd \mu_\bx \mathd\mu_\by\nonumber\\
&\le & \int_{\mathcal{M}}  \int_{\mathcal{M}}R_{t'}(\bx,\by) \frac{1}{w_{t',h}(\bx)w_{t',h}(\by)}\sum_{i,j=1}^nR_{t'}(\bx,\bfp_i)R_{t'}(\bfp_j,\by)V_i
V_j(u_i-u_j)^2\mathd \mu_\bx \mathd\mu_\by\nonumber\\
&=&  \sum_{i,j=1}^n\left(\int_{\mathcal{M}}  \int_{\mathcal{M}}\frac{1}{w_{t',h}(\bx)w_{t',h}(\by)}
R_{t'}(\bx,\bfp_i)R_{t'}(\bfp_j,\by)R_{t'}(\bx,\by)\mathd \mu_\bx \mathd\mu_\by\right)V_i
V_j(u_i-u_j)^2\nonumber\\
\label{eqn:A1}
\end{eqnarray*}
Denote
$$A = \int_{\mathcal{M}}  \int_{\mathcal{M}}\frac{1}{w_{t',h}(\bx)w_{t',h}(\by)}
R_{t'}(\bx,\bfp_i)R_{t'}(\bfp_j,\by)R_{t'}(\bx,\by)\mathd \mu_\bx \mathd\mu_\by$$
and then notice only when $|\bfp_i-\bfp_j|^2\le 36t'$ is $A \neq 0$. For 
$|\bfp_i-\bfp_j|^2\le 36t'$, we have
\begin{eqnarray*}
A &\le&\int_{\mathcal{M}}   \int_{\mathcal{M}}
R_{t'}(\bx,\bfp_i)R_{t'}(\bfp_j,\by)R_{t'}(\bx,\by)R\left(\frac{|\bfp_i-\bfp_j|^2}{72t'}\right)^{-1}R\left(\frac{|\bfp_i-\bfp_j|^2}{72t'}\right)\mathd \mu_\bx \mathd\mu_\by\nonumber\\
&\le&\frac{CC_t}{\delta_0}  \int_{\mathcal{M}}   \int_{\mathcal{M}}
R_{t'}(\bx,\bfp_i)R_{t'}(\bfp_j,\by)R\left(\frac{|\bfp_i-\bfp_j|^2}{72t'}\right)\mathd \mu_\bx \mathd\mu_\by\nonumber\\
&\le&CC_t  \int_{\mathcal{M}}   \int_{\mathcal{M} }
R_{t'}(\bx,\bfp_i)R_{t'}(\bfp_j,\by)R\left(\frac{|\bfp_i-\bfp_j|^2}{72t'}\right)\mathd \mu_\bx \mathd\mu_\by\nonumber\\
&\le & CC_t R\left(\frac{|\bfp_i-\bfp_j|^2}{4t}\right).\nonumber\\
\label{eqn:A2}
\end{eqnarray*}

Combining the above two inequalities and using Lemma~\ref{thm:elliptic_L_t}, we obtain
\begin{eqnarray}
C\frac{C_t}{t}  \sum_{i,j=1}^nR\left(\frac{|\bfp_i-\bfp_j|^2}{4t}\right)(u_i-u_j)^2V_iV_j\ge \int_{\mathcal{M}}  (u_t(\bx)-\bar{u}_t)^2\mathd\mu_\bx
\label{eqn:u_2V_0}
\end{eqnarray}

We now lower bound the RHS of the above equation.
\begin{eqnarray*}
 {|\mathcal{M}|}|\bar{u}_t|&=&\left|\int_{\mathcal{M}}u_t(\bx)\mathd \mu_\bx\right|
			  =\left| \sum_{j=1}^n \left( u_jV_j \int_{\mathcal{M}}  \frac{C_t'}{w_{t',h}(\bx)}R\left(\frac{|\bx-\bfp_j|^2}{4t'}\right) \mathd \mu_\bx \right) \right|.
\end{eqnarray*}
Let  $q(\bx) = \frac{C_t'}{w_{t',h}(\bx)}{R\left(\frac{|\bx-\bfp_j|^2}{4t'}\right)}$. There exists a constant $C$ so that $|q(\bx)|\leq CC_t' $ and
\begin{eqnarray*}
|\nabla q(\bx)| \leq \frac{C_t'}{w_{t',h}(\bx)} \left|\nabla R\left(\frac{|\bx-\bfp_j|^2}{4t'}\right)\right| + \frac{C_t' \left|\nabla w_{t',h}(\bx)\right|}{w^2_{t,h}(\bx)} R\left(\frac{|\bx-\bfp_j|^2}{4t'}\right) \leq \frac{CC_t'}{t^{1/2}}. 
\end{eqnarray*}
Then, using the definition of the integral accuracy index, 
there exists a constant $C$ 
\begin{eqnarray*}
\quad\left| \int_{\mathcal{M}}  \frac{C_t'}{w_{t',h}(\bx)}R\left(\frac{|\bx-\bfp_j|^2}{4t'}\right) \mathd \mu_\bx  - \sum_{i=1}^n \frac{C_t'}{w_{t',h}(\bfp_i)}R\left(\frac{|\bfp_i-\bfp_j|^2}{4t'}\right)V_i\right| \leq \frac{Ch}{t^{1/2}}. 
\end{eqnarray*}
Thus we have 
\begin{eqnarray}			  
\label{eqn:u_2V_1}
&&{|\mathcal{M}|}|\bar{u}_t| \\
&\le& \left| \sum_{i,j=1}^n \frac{C_t'}{w_{t',h}(\bfp_i)}R\left(\frac{|\bfp_i-\bfp_j|^2}{4t'}\right)u_jV_iV_j \right| + \frac{Ch}{t^{1/2}} (\sum_{j=1}^n |u_jV_j|) \nonumber\\
&\le& \left| \sum_{i=1}^n u_t(\bfp_i)V_i\right| +\frac{Ch}{t^{1/2}} (\sum_{i=1}^n |u_iV_i|) \nonumber \\
&=&\frac{1}{|\mathcal{M}|}\left|\sum_{i,j=1}^n\frac{C_t'}{w_{t',h}(\bfp_i)}R\left(\frac{|\bfp_i-\bfp_j|^2}{4t'}\right)(u_j-u_i)V_iV_j\right|+ \frac{Ch}{t^{1/2}} (\sum_{i=1}^n u_i^2V_i)^{1/2}\nonumber\\
&\le & \frac{CC_t^{1/2}}{\sqrt{|\mathcal{M}|}}\left(\sum_{i,j=1}^nR\left(\frac{|\bx-\bfp_j|^2}{4t'}\right)(u_i-u_j)^2V_iV_j\right)^{1/2}+\frac{Ch}{t^{1/2}} (\sum_{i=1}^n u_i^2V_i)^{1/2}\nonumber
\end{eqnarray}
where the first equality is due to $\sum_{i=1}^n u_iV_i = 0$. Denote
\begin{eqnarray*}
A = \int_{\mathcal{M}}\frac{C_t}{w_{t',h}^2(\bx)}R\left(\frac{|\bx-\bfp_i|^2}{4t'}\right)
R\left(\frac{|\bx-\bfp_l|^2}{4t'}\right)\mathd\mu_\bx-\nonumber\\
\sum_{j=1}^n\frac{C_t}{w_{t',h}^2(\bfp_j)}R\left(\frac{|\bfp_j-\bfp_i|^2}{4t'}\right)R\left(\frac{|\bfp_j-\bfp_l|^2}{4t'}\right)V_j
\end{eqnarray*}
and then $|A|\le \frac{Ch}{t^{1/2}}$. At the same time, notice that only when $|\bfp_i-\bfp_l|^2 <16t'$ is $A\neq 0$. Thus we have 
\begin{eqnarray*}
|A| \le \frac{1}{\delta_0} |A| R(\frac{|\bfp_i-\bfp_l|^2}{32t'}),
\end{eqnarray*}
and
\begin{eqnarray}
\label{eqn:u_2V_2}
&&\left|\int_{\mathcal{M}}u_t^2(\bx)\mathd\mu_\bx-\sum_{j=1}^nu_t^2(\bfp_j)V_j\right| \\
&\le&
  \sum_{i,l=1}^n|C_{t}u_iu_lV_iV_l| |A| \nonumber\\
&\le &\frac{Ch}{t^{1/2}} \sum_{i, l=1}^n\left|C_{t} R\left(\frac{|\bfp_i-\bfp_l|^2}{32t'}\right) u_iu_lV_iV_l \right|\nonumber\\
&\le &\frac{Ch}{t^{1/2}} \sum_{i, l=1}^n C_{t} R\left(\frac{|\bfp_i-\bfp_l|^2}{32t'}\right) u_i^2V_iV_l \le
\frac{Ch}{t^{1/2}} \sum_{i=1}^n u_i^2V_i.\nonumber
\end{eqnarray}
Now combining Equation~\eqref{eqn:u_2V_0},~\eqref{eqn:u_2V_1} and \eqref{eqn:u_2V_2}, we have for small $t$ 
\begin{eqnarray*}
  \sum_{i=1}^nu^2_t(\bfp_i)V_i &=& \int_{\mathcal{M}}  u_t^2(\bx)\mathd\mu_\bx+\frac{Ch}{t^{1/2}}\sum_{i=1}^n u_i^2V_i \nonumber \\
&\le& 2\int_{\mathcal{M}}  (u_t(\bx)-\bar{u}_t)^2\mathd\mu_\bx+2\bar{u}_t^2|\mathcal{M}|+ \frac{Ch}{t^{1/2}}\sum_{i=1}^n u_i^2V_i\nonumber\\
&\le & \frac{CC_t}{t}  \sum_{i,j=1}^nR\left(\frac{|\bfp_i-\bfp_j|^2}{4t}\right)(u_i-u_j)^2V_iV_j+ \frac{Ch}{t}\sum_{i=1}^n u_i^2V_i.
\end{eqnarray*}
Here we use the fact that $t=18t'$ hence $$R\left(\frac{|\bfp_i-\bfp_j|^2}{4t'}\right) \leq \frac{1}{\delta_0} R\left(\frac{|\bfp_i-\bfp_j|^2}{4t}\right).$$


Let $\delta=\frac{w_{\min}}{2w_{\max}+w_{\min}}$ with $w_{\min}=\min_\bx w_{t,h}(\bx)$
and $w_{\max}=\max_\bx w_{t,h}(\bx)$. If $$\sum_{i=1}^nu^2(\bfp_i)V_i\ge \delta^2 \sum_{i=1}^nu_i^2V_i,$$ we have completed the proof. 
Otherwise, we have
\begin{eqnarray}
\sum_{i=1}^n( u_i-u_t(\bfp_i))^2V_i = \sum_{i=1}^n u_i^2V_i + \sum_{i=1}^n u_t(\bfp_i)^2V_i - 2 \sum_{i=1}^n u_iu_t(\bfp_i)V_i \ge (1- \delta)^2\sum_{i=1}^n u_i^2V_i. \nonumber
\end{eqnarray}
This enables us to prove the theorem as follows. 
\begin{eqnarray}
&&  C_t\sum_{i,j=1}^nR\left(\frac{|\bfp_i-\bfp_j|^2}{4t'}\right)(u_i-u_j)^2V_iV_j=   2C_t\sum_{i,j=1}^nR\left(\frac{|\bfp_i-\bfp_j|^2}{4t'}\right)u_i(u_i-u_j)V_iV_j\nonumber\\
&=& 2\sum_{i=1}^n(u_i-u_t(\bfp_i))^2w_{t,h}(\bfp_i)V_i+2\sum_{i=1}^nu_t(\bfp_i)(u_i-u(\bfp_i))w_{t,h}(\bfp_i)V_i\nonumber\\
&\ge& 2\sum_{i=1}^n(u_i-u_t(\bfp_i))^2w_{t,h}(\bfp_i)V_i-2\left(\sum_{i=1}^nu_t^2(\bfp_i)w_{t,h}(\bfp_i)V_i\right)^{1/2}\left(\sum_{i=1}^n(u_i-u_t(\bfp_i))^2w_{t,h}(\bfp_i)V_i\right)^{1/2}\nonumber\\
&\ge& 2w_{\min}\sum_{i=1}^n(u_i-u_t(\bfp_i))^2V_i-2w_{\max}\left(\sum_{i=1}^nu_t^2(\bfp_i)V_i\right)^{1/2}\left(\sum_{i=1}^n(u_i-u_t(\bfp_i))^2V_i\right)^{1/2}\nonumber\\
&\ge& 2(w_{\min}(1-\delta)^2-w_{\max}\delta(1-\delta))\sum_{i=1}^nu_i^2V_i\ge w_{\min}(1-\delta)^2\sum_{i=1}^nu_i^2V_i.\nonumber
\end{eqnarray}

\end{proof}

\section{Estimation of $\|\nabla L_t(u_{t}-u_{t, h})\|_{L^2(\M)}$}

In this section, we upper bound $\nabla L_t(u_{t}-u_{t, h})\|_{L^2(\M)}$. 
Remember that $u_t$ satisfies the integral equation \eqref{eq:integral-homo} and
\begin{eqnarray}
u_{t,h}(\bx) = \frac{1}{w_{t,h}(\bx)}\left(\sum_{\bfp_j\in P}R_t(\bx,\bfp_j)u_jV_j
-t\sum_{\bfp_j\in P}\bar{R}_t(\bx,\bfp_j)f_jV_j\right),\nonumber
\end{eqnarray}
where $\mathbf{u}=(u_1, \cdots, u_n)^t$ with $\sum_{i=1}^n u_i V_i = 0$ solves the problem~\eqref{eqn:dis-homo}, 
$ f_j = f(\bfp_j)$ and $w_{t,h}(\bx)=\sum_{\bfp_j\in P}R_t(\bx,\bfp_j)V_j$.  

$\nabla L_t(u_{t}-u_{t, h})\|_{L^2(\M)}$ is splitted to two terms,
$$\nabla L_t(u_{t}-u_{t, h})=\nabla (L_tu_{t}-L_{t,h}u_{t, h})+\nabla (L_{t,h}u_{t,h}-L_tu_{t, h}).$$

The second term is easy to bound.
\begin{eqnarray}
\label{eqn:du2}
&&\|\nabla\left(L_t(u_{t}) - L_{t, h}(u_{t, h})\right)\|_{L^2(\M)}  \\ 
&=& \left( \int _\M \left( \int_{\mathcal{M}}\nabla_\bx\bar{R}_t(\bx,\by)f(\by) - \sum_{\bfp_j\in P}\nabla_\bx\bar{R}_t(\bx,\bfp_j)f(\bfp_j)V_j\right)^2 \mathd\mu_\bx\right)^{1/2} \nonumber \\
&\le& \frac{Ch}{t}\|f\|_{C^1(\M)}. \nonumber
\end{eqnarray}
The first term is further splited by defining
\begin{eqnarray*}
  a_{t,h}(\bx)&=&\frac{1}{w_{t,h}(\bx)}\sum_{\bfp_j\in P}R_t(\bx,\bfp_j)u_jV_j, \\
c_{t,h}(\bx)&=&-\frac{t}{w_{t,h}(\bx)}\sum_{\bfp_j\in P}\bar{R}_t(\bx,\bfp_j)f(\bfp_j)V_j, 
\end{eqnarray*}
To simplify the notation, we denote $h=h(P,\bV,\M)$ and $n=|P|$.
Consider $\|\nabla (L_ta_{t, h} - L_{t, h}a_{t, h})\|_{L_2}$.
\begin{eqnarray}
\label{eqn:da_1}
&&  \int_{\mathcal{M}}\left|\nabla a_{t,h}(\bx)\right|^2\left|\int_{\mathcal{M}}R_t(\bx,\by) \mathd \mu_\by-\sum_{\bfp_j\in P}R_t(\bx,\bfp_j)V_j\right|^2\mathd\mu_\bx\\
&\le& \frac{Ch^2}{t}\int_{\mathcal{M}}\left|\nabla a_{t,h}(\bx)\right|^2\mathd\mu_\bx \nonumber\\
&\le & \frac{Ch^2}{t} \left(\int_{\mathcal{M}} \left| \frac{1}{w_{t, h}(\bx)} \sum_{\bfp_j\in P}\nabla R_t(\bx,\bfp_j)u_jV_j \right|^2 \mathd\mu_\bx\right.\nonumber\\
&&\left.+\int_{\mathcal{M}} \left| \frac{\nabla w_{t,h}(\bx)}{w^2_{t, h}(\bx)} \sum_{\bfp_j\in P} R_t(\bx,\bfp_j)u_jV_j \right|^2 \mathd\mu_\bx\right)\nonumber\\
&\le & \frac{Ch^2}{t^2} \int_{\mathcal{M}} \left| \sum_{\bfp_j\in P} R_{2t}(\bx,\bfp_j)u_jV_j \right|^2 \mathd\mu_\bx\nonumber\\
&\le & \frac{Ch^2}{t^2} \left( \sum_{j=1}^{n}u_j^2V_j \int_{\mathcal{M}}R_{2t}(\bx,\bfp_j)  \mathd\mu_\bx    \right)  
\le  \frac{Ch^2}{t^2}\sum_{j=1}^{n}u_j^2V_j.\nonumber
\end{eqnarray}
where $R_{2t}(\bx,\bfp_j)=C_tR\left(\frac{|\bx-\bfp_j|^2}{8t}\right)$. Here we use the assumption that $R(s)>\delta_0$ for all 
$0\le s\le 1/2$.
\begin{eqnarray}
\label{eqn:da_2}
&&  \int_{\mathcal{M}}\left| a_{t,h}(\bx)\right|^2\left|\int_{\mathcal{M}}\nabla R_t(\bx,\by) \mathd \mu_\by-\sum_{\bfp_j\in P}\nabla R_t(\bx,\bfp_j)V_j\right|^2\mathd\mu_\bx\\
&\le & \frac{Ch^2}{t^2}\int_{\mathcal{M}}\left| a_{t,h}(\bx)\right|^2\mathd\mu_\bx \le  \frac{Ch^2}{t^2}\sum_{j=1}^{n}u_j^2V_j.\nonumber
\end{eqnarray}
Let 
\begin{eqnarray*}
B &=&  C_t\int_{\mathcal{M}}\frac{1}{w_{t, h}(\by)}\nabla R\left(\frac{|\bx-\by|^2}{4t}\right)R\left(\frac{|\bfp_i-\by|^2}{4t}\right) \mathd \mu_\by\nonumber\\
 &-&C_t\sum_{\bfp_j\in P} \frac{1}{w_{t, h}(\bfp_j)}\nabla R\left(\frac{|\bx-\bfp_j|^2}{4t}\right)R\left(\frac{|\bfp_i-\bfp_j|^2}{4t}\right)V_j. 
\end{eqnarray*}
We have $|B|<\frac{Ch}{t^{1/2}}$ for some constant $C$ independent of $t$. In addition, notice that
only when $|\bx-\bx_i|^2\leq 16t $ is $B\neq 0$, which implies 
\begin{eqnarray*}
|B| \leq \frac{1}{\delta_0}|B|R\left(\frac{|\bx-\bfp_i|^2}{32t}\right). 
\end{eqnarray*}
Then we have
\begin{eqnarray}
\label{eqn:da_3}
&&\int_{\mathcal{M}}\left|\int_{\mathcal{M}}\nabla R_t(\bx,\by)a_{t,h}(\by)  \mathd \mu_\by-\sum_{\bfp_j\in P}\nabla R_t(\bx,\bfp_j)a_{t,h}(\bfp_j)V_j\right|^2\mathd\mu_\bx\\
&=& \int_{\mathcal{M}}\left(\sum_{i=1}^{n}C_tu_iV_i B \right)^2\mathd\mu_\bx\nonumber\\
&\le &\frac{Ch^2}{t^2} \int_{\mathcal{M}}\left(\sum_{i=1}^{n} C_t|u_i|V_i R\left(\frac{|\bx-\bfp_i|^2}{32t}\right)  \right)^2 \mathd\mu_\bx \nonumber\\
&\le & \frac{Ch^2}{t^2} \left(\sum_{i=1}^{n}u_i^2V_i\right). \nonumber
\end{eqnarray} 
Combining Equation~\eqref{eqn:da_1}, ~\eqref{eqn:da_2} and ~\eqref{eqn:da_3}, we have
\begin{eqnarray}
&&\|\nabla (L_ta_{t, h} - L_{t, h}a_{t, h})\|_{L^2(\M)} \nonumber \\
&=&\left(\int_M \left|\left(L_t(a_{t, h}) - L_{t, h}(a_{t, h})\right)(\bx)\right|^2 \mathd\mu_\bx\right)^{1/2} \nonumber\\
&\le & \frac{1}{t}\left(\int_{\mathcal{M}}\left(\nabla a_{t,h}(\bx)\right)^2\left|\int_{\mathcal{M}}R_t(\bx,\by) \mathd \mu_\by-\sum_{\bfp_j\in P}R_t(\bx,\bfp_j)V_j\right|^2\mathd\mu_\bx\right)^{1/2}\nonumber\\
&&\frac{1}{t}\left(\int_{\mathcal{M}}\left(a_{t,h}(\bx)\right)^2\left|\int_{\mathcal{M}}\nabla_\bx R_t(\bx,\by) \mathd \mu_\by-\sum_{\bfp_j\in P}\nabla_\bx R_t(\bx,\bfp_j)V_j\right|^2\mathd\mu_\bx\right)^{1/2}\nonumber\\
&&+  \frac{1}{t}\left(\int_{\mathcal{M}}\left|\int_{\mathcal{M}}\nabla_\bx R_t(\bx,\by)a_{t,h}(\by)  \mathd \mu_\by-\sum_{\bfp_j\in P}\nabla_\bx R_t(\bx,\bfp_j)a_{t,h}(\bfp_j)V_j\right|^2\mathd\mu_\bx
\right)^{1/2} \nonumber \\
&\le & \frac{Ch}{t^{2}}\left(\sum_{i=1}^{n}u_i^2V_i\right)^{1/2}\le \frac{Ch}{t^{2}}\|f\|_{\infty}\nonumber
\end{eqnarray}
Using a similar argument, we obtain
\begin{eqnarray*}
\|\nabla (L_tc_{t, h} - L_{t, h}c_{t, h})\|_{L^2(\M)} &\le & \frac{Ch}{t^{3/2}}\|f\|_\infty,
\end{eqnarray*}
and thus
\begin{eqnarray}
\|\nabla (L_tu_{t, h} - L_{t, h}u_{t, h})\|_{L^2(\M)} \le \frac{Ch}{t^{2}}\|f\|_{\infty}.
\label{eqn:du1}
\end{eqnarray}
Then the estimation is completed by putting \eqref{eqn:du2} and \eqref{eqn:du1} together.


\bibliographystyle{abbrv}
\bibliography{reference}

\end{document}